\documentclass[a4paper,1	2pt]{amsart}
\usepackage{amsmath,amssymb,mathtools} 
\usepackage[british]{babel}
\usepackage[final,nopatch=footnote]{microtype}
\usepackage{mlmodern}
\usepackage[all,cmtip]{xy}
\usepackage[margin=1in]{geometry}
\usepackage[UKenglish]{isodate}
\usepackage[shortlabels]{enumitem}
\usepackage[hidelinks]{hyperref}
\usepackage{epigraph}
\usepackage{footnote}
\usepackage[T1]{fontenc}
\usepackage{bm}
\usepackage{mathrsfs}

\swapnumbers
\date{2nd June 2026}
\subjclass[2020]{18N10, 18N40}
\makeatletter
\let\c@subsection\c@equation
\makeatother
\numberwithin{equation}{section}

\theoremstyle{plain}
\newtheorem{theorem}[subsection]{Theorem}
\newtheorem{proposition}[subsection]{Proposition}
\newtheorem{lemma}[subsection]{Lemma}
\newtheorem{corollary}[subsection]{Corollary}

\theoremstyle{definition} 
\newtheorem{definition}[subsection]{Definition}
\newtheorem{example}[subsection]{Example}

\theoremstyle{remark}
\newtheorem{remark}[subsection]{Remark}

\newcommand{\cd}[2][]{\vcenter{\hbox{\xymatrix#1{#2}}}}

\newcommand{\hdash}{\rotatebox[origin=c]{90}{$\vdash$}}
\newcommand{\vcong}{\rotatebox[origin=c]{-90}{$\cong$}}
\newcommand{\fatpullbackcorner}[1][dr]{\save*!/#1-1.5pc/#1:(-1,1)@^{|-}\restore}
\newcommand{\fatterpullbackcorner}[1][dr]{\save*!/#1-2pc/#1:(-1,1)@^{|-}\restore}
\newcommand{\fatpushoutcorner}[1][dr]{\save*!/#1+1.75pc/#1:(1,-1)@^{|-}\restore}
\newcommand{\dtwocell}[3][0.5]{\ar@{}[#2] \ar@{=>}?(#1)+/u  0.2cm/;?(#1)+/d 0.2cm/^{#3}}
\newcommand{\dltwocell}[3][0.5]{\ar@{}[#2] \ar@{=>}?(#1)+/ur  0.2cm/;?(#1)+/dl 0.2cm/^{#3}}
\newcommand{\dtwocellleft}[3][0.5]{\ar@{}[#2] \ar@{=>}?(#1)+/u  0.2cm/;?(#1)+/d 0.2cm/_{#3}}
\newcommand{\ltwocell}[3][0.5]{\ar@{}[#2] \ar@{=>}?(#1)+/r 0.2cm/;?(#1)+/l 0.2cm/_{#3}}

\newdir{ >}{{}*!/-5pt/@{>}}

\usepackage{scalerel}

\DeclareMathOperator*{\bigboxtimes}{\scalerel*{\boxtimes}{\prod}}

\title{A convenient model category for bicategories}
\author{Alexander Campbell}
\address{School of Mathematics and Statistics F07 \\ University of Sydney \\ NSW 2006 \\ Australia}
\email{campbell@maths.usyd.edu.au}
\urladdr{https://acmbl.github.io/}

\begin{document}

\begin{abstract}
We introduce and study the model category of flexible $2$\nobreakdash-categories, which is Quillen equivalent to Lack's  model category of $2$-categories, but enjoys several excellent properties not shared by the latter. In particular, every object of this model category is cofibrant, it is a monoidal model category with respect to its cartesian closed structure, and its full subcategory of fibrant objects is equivalent to the category of bicategories and normal pseudofunctors.
\end{abstract}
\maketitle
\epigraph{\flushright{And here's a marvellous convenient place for our rehearsal.}}{--- Shakespeare, \textit{A Midsummer Night's Dream} III.i}
\tableofcontents

\section{Introduction}
It is a commonplace of higher category theory that, while every weak $2$-category is equivalent to a strict $2$-category, not every weak $3$-category is equivalent to a strict $3$-category. Nevertheless, the coherence theorem of Gordon, Power, and Street \cite{GPS} states that every tricategory is triequivalent to a $\mathbf{Gray}$-category, that is, to a category enriched over  the category $\bm{2}\textbf{Cat}$ of (strict) $2$-categories and (strict) $2$-functors equipped with Gray's symmetric monoidal structure \cite{gray}. Furthermore, one can, to a large extent, model  the category theory of tricategories by homotopy coherent $\mathbf{Gray}$-enriched category theory, that is, category theory enriched over $\bm{2}\textbf{Cat}$ as a monoidal model category, equipped with both Gray's symmetric monoidal structure and Lack's model structure \cite{Lac02, Lac04}.

However, this $\mathbf{Gray}$-enriched model  for three-dimensional category theory is not entirely satisfactory, as several important constructions on $2$-categories fail to define $\mathbf{Gray}$-enriched functors. Two illustrative examples are the pseudofunctor classifier of a $2$-category and the free $\mathbf{Gray}$-monoid on a $2$-category (cf.\ \cite[p.\ 11]{Gur13}), whose failures to define $\mathbf{Gray}$-enriched functors are reflections of the following two properties of the monoidal model category $\bm{2}\textbf{Cat}$:\  that not every $2$-category is cofibrant in Lack's model structure (cf.\ \cite[\S A]{LR16}), and  that Gray's symmetric monoidal structure is not the cartesian monoidal structure (cf.\ \cite[\S 3]{Shu12}). These two properties therefore make the monoidal model category $\bm{2}\textbf{Cat}$ an inconvenient base of enrichment for three-dimensional categorical homotopy theory and three-dimensional universal algebra.

The purpose of this paper is to introduce a more convenient base of enrichment for three-dimensional category theory:\ the cartesian model category of  flexible $2$-categories. This model category is Quillen equivalent to Lack's model category of $2$-categories, but it has the desired properties that all of its objects are cofibrant and that it is a monoidal model category with respect to its cartesian closed structure. Remarkably, its full subcategory of fibrant objects is equivalent to the category of bicategories and normal pseudofunctors, making it also a more convenient model category for studying bicategories than Lack's model category of bicategories \cite{Lac04}, in which one has direct access only to the less natural strict functors of bicategories.

Following a recollection of the category of free categories in \S\ref{freecat}, we  
 begin our study proper  in \S\ref{algcof} by introducing the category $\textbf{Flex}$ of flexible $2$-categories. In \S\ref{modstr} we construct a model structure on this category which is left-induced from Lack's model structure on $\bm{2}\textbf{Cat}$, whose fibrant objects and fibrations with fibrant codomain we characterise in \S\ref{fibob} and \S\ref{fibsec} respectively. Finally,  in \S\ref{monenr} we prove that this model structure is monoidal with respect to both the restriction to $\mathbf{Flex}$ of Gray's symmetric monoidal structure and the cartesian closed structure of $\mathbf{Flex}$.

\section{Preliminaries on free categories} \label{freecat}
In this preliminary section, we recollect those notions and results pertaining to free categories that we shall use in the sequel. 

\subsection{The category of free categories}
A morphism $f$ in a category is \emph{atomic} if it is not an identity and if, whenever $f = h\circ g$, either $g$ or $h$ is an identity. A category is \emph{free} if each of its morphisms $f$ can be uniquely expressed as a composite of atomic morphisms $f = f_n \circ \cdots \circ f_1$ ($n \geq 0$). A functor  between free categories is a \emph{morphism of free categories} if it sends each atomic morphism in the domain to an atomic morphism or an identity morphism in the codomain. 

This defines the category $\mathbf{Free}$ of free categories as a non-full subcategory of $\mathbf{Cat}$. This subcategory is \emph{replete}, meaning that any category isomorphic to a free category is free and that any isomorphism in $\mathbf{Cat}$ between free categories is a morphism of free categories; in short, this means that the subcategory inclusion $\mathbf{Free} \longrightarrow \mathbf{Cat}$ is an isofibration.

Let $\mathbf{Gph}$ denote the category of reflexive graphs. The forgetful functor $\mathbf{Cat} \longrightarrow \mathbf{Gph}$ admits a left adjoint $\mathbf{Gph} \longrightarrow \mathbf{Cat}$ which sends a reflexive graph $X$ to the free category whose objects are the vertices of $X$, whose morphisms are the paths of non-degenerate edges in $X$, whose  composition operation is concatenation of paths, and whose  identity and atomic morphisms are  the paths of length zero and one respectively. This left adjoint functor factors through an equivalence of categories $\mathbf{Gph} \simeq \mathbf{Free}$. 

\subsection{Colimits and limits in $\mathbf{Free}$}
The category $\mathbf{Free}$ therefore enjoys all the categorical properties of a presheaf category; in particular, it admits all small colimits and limits. Moreover, the inclusion $\mathbf{Free} \longrightarrow \mathbf{Cat}$ is a left adjoint functor and hence preserves all colimits; it also preserves coreflexive equalisers, as may be shown by a straightforward calculation.  

To distinguish it from the cartesian product in $\mathbf{Cat}$, we shall denote the product in $\mathbf{Free}$ of a family of free categories $(C_i)_{i \in I}$ by $\bigboxtimes_{i \in I} C_i$. For example, the product $\mathbb{D}^1 \boxtimes \mathbb{D}^1$ in $\mathbf{Free}$, where $\mathbb{D}^1 = \{\bullet \longrightarrow \bullet\}$ denotes the free category containing a morphism, is the free category on the following graph.
\begin{equation*}
\cd{
\bullet \ar[r] \ar[d] \ar[dr] & \bullet \ar[d] \\
\bullet \ar[r] & \bullet
}
\end{equation*}
Note that the terminal category $\mathbb{D}^0$ is also the terminal object of $\mathbf{Free}$.

\subsection{The path category functor}
We denote the right adjoint of the inclusion functor $\mathbf{Free} \longrightarrow \mathbf{Cat}$ by $P \colon \mathbf{Cat} \longrightarrow \mathbf{Free}$. For each category $C$, the category $PC$ is free on the underlying reflexive graph of $C$, and the counit functor $\varepsilon_C \colon PC \longrightarrow C$ is the identity on objects and sends a path of non-identity morphisms $(f_n,\ldots,f_1)$ in $C$ to its composite $f_n \circ \cdots \circ f_1$; note that $\varepsilon_C$ is bijective on objects and full. If $C$ is a free category, the unit morphism $\eta_C \colon C \longrightarrow PC$ is the identity on objects and sends a morphism $f$ with atomic decomposition $f = f_n \circ\cdots\circ f_1$ to the path $(f_n,\ldots,f_1)$.

\subsection{Finitely presentable free categories} \label{fpfreecatsec}
It follows from the equivalence $\mathbf{Gph} \simeq \mathbf{Free}$ that a free category is finitely presentable in $\mathbf{Free}$ if and only if it has finitely many objects and finitely many atomic morphisms; this is furthermore equivalent to the free category's being finitely presentable in $\mathbf{Cat}$. To prove the nontrivial direction of this latter equivalence, observe that a free category that is finitely presentable in $\mathbf{Cat}$ is generated under composition by some finite subset of its morphisms, which must contain all the atomic morphisms since they admit no non-trivial factorisations.

\subsection{Injective and surjective morphisms} \label{wfsfree}
A morphism of free categories is \emph{injective} if it is injective on objects and faithful. These are the monomorphisms in the category $\mathbf{Free}$; since $\mathbf{Free}$ is equivalent to a presheaf category, the pushout-product of two injective morphisms is therefore injective. For example, for each free category $C$, the unit morphism $\eta_C \colon C \longrightarrow PC$ is injective. 

 A morphism of free categories $F \colon A \longrightarrow B$ is \emph{surjective} if it is surjective on objects and if, for each pair of objects $x,y \in A$ and each atomic or identity morphism $g \colon Fx \longrightarrow Fy$ in $B$, there exists an atomic or identity morphism $f \colon x \longrightarrow y$ in $A$ such that $Ff = g$. Note that any surjective morphism of free categories is in particular full. The injective and surjective morphisms form the left class and right class respectively of a weak factorisation system on $\mathbf{Free}$, cofibrantly generated by the pair of injections $\emptyset \longrightarrow \mathbb{D}^0$ and $\partial\mathbb{D}^1 \longrightarrow \mathbb{D}^1$, where $\partial\mathbb{D}^1 = \mathbb{D}^0 + \mathbb{D}^0$.

This same pair of functors cofibrantly generates a weak factorisation system $(\mathscr{I},\mathscr{P})$ on $\mathbf{Cat}$, studied by Lack in \cite[\S4.3]{Lac02}, whose right class consists of those functors that are surjective on objects and full. A morphism of free categories belongs to the class $\mathscr{I}$ if and only if it is injective; the nontrivial direction of this equivalence follows  from \cite[Corollary 4.12]{Lac02}, which implies that every functor belonging to $\mathscr{I}$ is injective on objects and faithful.

\subsection{The funny symmetric monoidal closed structure} \label{funnysec}
In addition to its cartesian closed structure, the category $\mathbf{Free}$ admits another symmetric monoidal closed structure given by the restriction of the so-called funny symmetric monoidal structure on $\mathbf{Cat}$. The funny tensor product of two categories $A$ and $B$ is the category $A \mathbin{\square} B$ given by the following pushout in $\mathbf{Cat}$. 
\begin{equation*}
\cd{
\operatorname{ob}A \times \operatorname{ob}B \ar[r] \ar[d] & \operatorname{ob}A \times B \ar[d] \\
A \times \operatorname{ob}B \ar[r] & A \mathbin{\square} B \fatpushoutcorner
}
\end{equation*}
The unit object for this monoidal structure is the terminal category $\mathbb{D}^0$. 
To contrast it with the cartesian product in $\mathbf{Free}$, observe that the funny tensor product $\mathbb{D}^1 \mathbin{\square} \mathbb{D}^1$ is the free category on the following graph.
\begin{equation*}
\cd{
\bullet \ar[r] \ar[d] & \bullet \ar[d] \\
\bullet \ar[r] & \bullet
}
\end{equation*}

The internal hom for a pair of categories $A$ and $B$ in the funny closed structure on $\mathbf{Cat}$ is the category $[A,B]_{\mathbf{Cat}}$ of functors $A \longrightarrow B$ and unnatural transformations between them, where an unnatural transformation  $\theta \colon F \Longrightarrow G \colon A \longrightarrow B$ consists of a family of morphisms $\theta_x \colon Fx \longrightarrow Gx$ in $B$ indexed by the objects $x\in A$. If $A$ and $B$ are free, their internal hom in the funny closed structure on $\mathbf{Free}$ is the free category $[A,B]_{\mathbf{Free}}$ whose objects are the morphisms of free categories $A \longrightarrow B$, and whose atomic and identity morphisms are the unnatural transformations whose components are atomic or identity morphisms; note that there is a canonical functor  
 $[A,B]_{\mathbf{Free}} \longrightarrow [A,B]_{\mathbf{Cat}}$.

For each pair of free categories $A$ and $B$, the canonical morphism of free categories $(\pi_1,\pi_2) \colon A \mathbin{\square}B \longrightarrow A \boxtimes B$ fits into the following pushout square in $\mathbf{Free}$,
\begin{equation*}
\cd[@C=3.5em]{
\displaystyle\sum_{\substack{f,g \\ \text{atomic}}} \mathbb{D}^1 \mathbin{\square} \mathbb{D}^1 \ar[r]^-{\sum (\pi_1,\pi_2)} \save[]-<0pt,+10pt>{} \ar[d]^-{(f\mathbin{\square}g)} \restore & \displaystyle\sum_{\substack{f,g \\ \text{atomic}}} \mathbb{D}^1 \boxtimes \mathbb{D}^1 \save[]-<0pt,+10pt>{} \ar[d]^-{(f\boxtimes g)} \restore \\
A \mathbin{\square} B \ar[r]_-{(\pi_1,\pi_2)} & A \boxtimes B \fatpushoutcorner
}
\end{equation*}
where the coproducts are indexed by pairs of atomic morphisms $f$ in $A$ and $g$ in $B$.

\section{The category of flexible $2$-categories} \label{algcof}
In this section, we define the category of flexible $2$-categories and establish those of its categorical properties that we shall need in our construction of the left-induced model structure thereon.

\subsection{The category of flexible $\bm{2}$-categories}
\begin{definition}
A $2$-category is \emph{flexible} if its underlying category is free.
\end{definition}

\begin{remark}
This terminology comes from two-dimensional monad theory (cf.\ \cite[Remark 4.10]{Lac02}).
\end{remark}

\begin{definition}
A $2$-functor between flexible $2$-categories is a \emph{morphism of flexible $2$-categories} if its underlying functor is a morphism of free categories.
\end{definition}

This defines the category $\mathbf{Flex}$ of flexible $2$-categories as a non-full subcategory of $\bm{2}\mathbf{Cat}$. We shall continually use the observation that the category $\mathbf{Flex}$ and the inclusion functor $\mathbf{Flex} \longrightarrow \bm{2}\mathbf{Cat}$ fit into the following pullback square of categories,
\begin{equation} \label{pbsquare}
\cd{
\mathbf{Flex} \ar[r] \ar[d]_-U \fatpullbackcorner & \bm{2}\mathbf{Cat} \ar[d]^-U \\
\mathbf{Free} \ar[r]_-{} & \mathbf{Cat}
}
\end{equation}
which is moreover a bipullback square, since the forgetful functor $U \colon \bm{2}\mathbf{Cat} \longrightarrow \mathbf{Cat}$ is an isofibration (cf.\ \cite{pspb}). For instance, from the fact that the replete subcategory inclusion $\mathbf{Free} \longrightarrow \mathbf{Cat}$ is an isofibration, it follows that its pullback $\mathbf{Flex} \longrightarrow \bm{2}\mathbf{Cat}$ is also an isofibration, i.e.\ that $\mathbf{Flex}$ is a replete subcategory of $\mathbf{2Cat}$. 

We shall also make frequent use of the fact that the functor $U \colon \bm{2}\mathbf{Cat} \longrightarrow \mathbf{Cat}$ is a Grothendieck fibration whose cartesian morphisms are the locally fully faithful $2$-functors. Given a category $A$ and a $2$-category $B$, the $U$-cartesian lift of a functor $F \colon A \longrightarrow UB$ is the evident locally fully faithful $2$-functor $A' \longrightarrow B$ whose domain $A'$ is the $2$-category whose underlying category is $A$, in which a $2$-cell between a parallel pair of morphisms $f,g$ is a $2$-cell $Ff \Longrightarrow Fg$ in $B$, and whose $2$-cell composition operations and identities are induced from those of $B$.

\subsection{Colimits and limits in $\mathbf{Flex}$} \label{limsinflex}
The functor $U \colon \bm{2}\mathbf{Cat} \longrightarrow \mathbf{Cat}$ preserves all colimits and limits, since it has both a left and a right adjoint. It then follows from the bipullback square (\ref{pbsquare}) that $\mathbf{Flex}$ admits, and the functors $U \colon \mathbf{Flex} \longrightarrow \mathbf{Free}$ and $\mathbf{Flex} \longrightarrow \bm{2}\mathbf{Cat}$ preserve, all small colimits as well as coreflexive equalisers, since these are  preserved by the inclusion $\mathbf{Free} \longrightarrow \mathbf{Cat}$.

The product in $\mathbf{Flex}$ of a family $(A_i)_{i \in I}$ of flexible $2$-categories, which we write as $\bigboxtimes_{i \in I}A_i$ to distinguish it from the cartesian product in $\bm{2}\mathbf{Cat}$, can be constructed as the domain of the $U$-cartesian lift 
$$(\pi_i)_{i \in I} \colon \bigboxtimes_{i \in I} A_i \longrightarrow \prod_{i \in I} A_i$$
of the canonical functor 
$$(\pi_i)_{i \in I} \colon\bigboxtimes_{i \in I} U(A_i) \longrightarrow \prod_{i \in I} U(A_i) = U\left(\prod_{i\in I}A_i\right)$$ 
whose $i$\textsuperscript{th} component is the projection $\pi_i \colon \bigboxtimes_{i \in I} U(A_i) \longrightarrow U(A_i)$ for the cartesian product in $\mathbf{Free}$. 
Note that the forgetful functor $U \colon \mathbf{Flex} \longrightarrow \mathbf{Free}$ preserves products by construction. 
For example, the product $\mathbb{D}^1\mathbin{\boxtimes} \mathbb{D}^1$ in $\mathbf{Flex}$ is the flexible $2$-category presented by the following diagram.
\begin{equation*}
\cd{
\bullet \ar[r] \ar[d] \ar[dr] & \bullet \ar[d] \\
\bullet \ar[r] \ar@{}[ur]|(0.3){\cong} \ar@{}[ur]|(0.7){\cong}& \bullet
}
\end{equation*}

Since all small limits can be constructed from small products and coreflexive equalisers, we have proved the following proposition.

\begin{proposition}
The category $\mathbf{Flex}$ admits, and the forgetful functor $U \colon \mathbf{Flex} \longrightarrow \mathbf{Free}$ preserves, all small colimits and limits. The inclusion functor $\mathbf{Flex} \longrightarrow \bm{2}\mathbf{Cat}$ preserves all small colimits and coreflexive equalisers.
\end{proposition}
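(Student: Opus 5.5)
The plan is to read everything off the pullback square (\ref{pbsquare}), which is also a bipullback, using the standard fact about such squares. Suppose a diagram $D \colon J \longrightarrow \mathbf{Flex}$ is given, and suppose some class of (co)limits exists in $\mathbf{Free}$ and $\bm{2}\mathbf{Cat}$ and is preserved by $\mathbf{Free} \longrightarrow \mathbf{Cat}$ and by $U \colon \bm{2}\mathbf{Cat} \longrightarrow \mathbf{Cat}$. Then the (co)limit of $D$ exists in $\mathbf{Flex}$, and it is preserved by both projections. To see this, take the colimit $L$ of the image of $D$ in $\bm{2}\mathbf{Cat}$ and the colimit $L'$ of the image in $\mathbf{Free}$. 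Their images in $\mathbf{Cat}$ are both colimits of the same diagram, so there is a comparison isomorphism $UL \cong L'$. Because $U \colon \bm{2}\mathbf{Cat} \longrightarrow \mathbf{Cat}$ is an isofibration, we may replace $L$ by an isomorphic $2$-category whose underlying category is exactly $L'$. That $2$-category is then an object of $\mathbf{Flex}$, and it has a cocone whose components lie in $\mathbf{Flex}$. Its universal property in $\mathbf{Flex}$ follows from the universal properties in the two corners, since a morphism in a pullback of categories is just a compatible pair of morphisms. The same argument works for limits.

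First I apply this to colimits. $U \colon \bm{2}\mathbf{Cat} \longrightarrow \mathbf{Cat}$ has both adjoints, so it preserves all small colimits and limits. The inclusion $\mathbf{Free} \longrightarrow \mathbf{Cat}$ is a left adjoint (with right adjoint $P$), and $\mathbf{Free}$ is cocomplete. The lemma therefore gives all small colimits in $\mathbf{Flex}$, preserved by both $U \colon \mathbf{Flex} \longrightarrow \mathbf{Free}$ and $\mathbf{Flex} \longrightarrow \bm{2}\mathbf{Cat}$. Next, $\mathbf{Free} \longrightarrow \mathbf{Cat}$ preserves coreflexive equalisers, so the same argument gives coreflexive equalisers in $\mathbf{Flex}$, preserved by both functors. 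A coreflexive pair in $\mathbf{Flex}$ stays coreflexive in each corner, since functors preserve the common retraction.

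For products the inclusion $\mathbf{Free} \longrightarrow \mathbf{Cat}$ does not preserve them, so the lemma does not apply. Instead I verify directly that the $U$-cartesian lift $\bigboxtimes_i A_i$ constructed above is a product in $\mathbf{Flex}$. Suppose a flexible $B$ and morphisms $F_i \colon B \longrightarrow A_i$ in $\mathbf{Flex}$ are given. The underlying functors $UF_i$ induce a unique morphism of free categories $UB \longrightarrow \bigboxtimes_i U(A_i)$. Composing with $(\pi_i)$ gives the underlying functor of the $2$-functor $(F_i) \colon B \longrightarrow \prod_i A_i$. Since $(\pi_i) \colon \bigboxtimes_i A_i \longrightarrow \prod_i A_i$ is $U$-cartesian (locally fully faithful), this factors uniquely through the lift to a $2$-functor with the prescribed underlying functor. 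That $2$-functor is a morphism of flexible $2$-categories, and its uniqueness follows from the uniqueness at each stage. By construction $U$ preserves these products.

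Finally, every small limit is an equaliser of a pair between products, and such a pair can be made coreflexive: equalise $\langle 1, f\rangle, \langle 1, g\rangle \colon X \rightrightarrows X \times Y$, which has common retraction the projection. So $\mathbf{Flex}$ is complete, and $U \colon \mathbf{Flex} \longrightarrow \mathbf{Free}$ preserves all limits because it preserves products and coreflexive equalisers. The main obstacle is the replacement step in the lemma: making sure the isofibration transport really produces a cocone in $\mathbf{Flex}$, that is, that the legs are morphisms of free categories. This holds because in the $\mathbf{Free}$ corner they already are.
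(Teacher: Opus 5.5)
Your proposal is correct and follows the paper's proof. Colimits and coreflexive equalisers come from the pullback square (\ref{pbsquare}), using that $U\colon\mathbf{2Cat}\to\mathbf{Cat}$ preserves all (co)limits and that $\mathbf{Free}\to\mathbf{Cat}$ preserves colimits and coreflexive equalisers; products are the $U$-cartesian lifts; and general limits are assembled from products and coreflexive equalisers. You simply make explicit what the paper leaves implicit: the transport along the isofibration, the universal property of the cartesian-lift product, and the trick $\langle 1,f\rangle,\langle 1,g\rangle$ that makes the equaliser pair coreflexive.
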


\subsection{The path $\bm{2}$-category functor} \label{qsec}
The inclusion $\mathbf{Flex} \longrightarrow \bm{2}\mathbf{Cat}$, being the pullback of a left adjoint functor along a Grothendieck fibration, is a left adjoint functor whose right adjoint $Q \colon \bm{2}\mathbf{Cat} \longrightarrow \mathbf{Flex}$ may be constructed as follows. For each $2$-category $A$, we define the flexible $2$-category $QA$, which we call the \emph{path $2$-category} of $A$, to be the domain of the $U$-cartesian lift $$\varepsilon_A \colon QA \longrightarrow A$$ of the counit functor $$\varepsilon_{UA} \colon PUA \longrightarrow UA.$$ These cartesian lifts $\varepsilon_A \colon QA \longrightarrow A$ are the components of the counit of the adjunction; note that they are bijective on objects, full, and locally fully faithful by construction.

Explicitly, $QA$ is the flexible $2$-category whose objects are the objects of $A$, whose $1$-cells are paths of non-identity morphisms in $A$, whose $1$-cell composition operation is concatenation of paths, whose $2$-cells $(f_m,\ldots,f_1) \Longrightarrow (g_n,\ldots,g_1)$ are named by $2$-cells $f_m\circ\cdots\circ f_1 \Longrightarrow g_n\circ\cdots\circ g_1$ in $A$, and whose $2$-cell composition operations and identities are induced from those of $A$. The counit $2$-functor $\varepsilon_A \colon QA \longrightarrow A$ is the identity on objects, sends a $1$-cell $(f_n,\ldots,f_1)$ in $QA$ to the composite $f_n\circ\cdots\circ f_1$ in $A$, and sends a $2$-cell in $QA$ to the $2$-cell in $A$ that names it.

If $A$ is flexible, the unit morphism $\eta_A \colon A \longrightarrow QA$ is the identity on objects, sends a morphism $f$ with atomic decomposition $f = f_n \circ\cdots\circ f_1$ to the path $(f_n,\ldots,f_1)$, and sends a $2$-cell $\alpha \colon f \Longrightarrow g$ to the $2$-cell $\eta_A(f) \Longrightarrow \eta_A(g)$ named by $\alpha$. Note that $\eta_A$ is bijective on objects, faithful, and locally fully faithful; it is also essentially full, since any path $(f_n,\ldots,f_1)$ is isomorphic to the path $\eta_A(f_n\circ\cdots\circ f_1)$ via the invertible $2$-cell in $QA$ between them named by the identity $2$-cell on $f_n\circ\cdots\circ f_1$ in $A$. 

For example, the unit morphism $\eta_{[2]} \colon [2] \longrightarrow Q[2]$, where $[2] = \{0 < 1 < 2\}$, is the inclusion displayed below.
\begin{equation*}
\left(\cd{
0 \ar[r]^-{01} & 1 \ar[r]^-{12} & 2
}\right)
\qquad
\xrightarrow{\hspace*{3em}}
\qquad
\left(\cd{
& 1 \ar[dr]^-{12} \ar@{}[d]|(0.6){\vcong} \\
0 \ar[ur]^-{01} \ar[rr]_{02} &{} & 2
}\right)
\end{equation*}

\begin{remark} \label{comonadremark}
It follows from the crude monadicity theorem that the inclusion functor $\mathbf{Flex} \longrightarrow \mathbf{2Cat}$ is strictly comonadic. 
Hence a $2$-category is flexible if and only if it admits a (necessarily unique) coalgebra structure for the induced comonad on $\mathbf{2Cat}$, which we call the path $2$-category comonad. This comonad satisfies the ``redundant coassociativity'' property of \cite{menni}, which is to say that a $2$-category $A$ is flexible if and only if the counit $2$-functor $\varepsilon_A \colon QA \longrightarrow A$ admits a (necessarily unique) section in $\mathbf{2Cat}$.
\end{remark}

\subsection{Local finite presentability of $\mathbf{Flex}$}
To prove that the cocomplete category $\mathbf{Flex}$ is locally finitely presentable, it suffices by \cite[Theorem 1.11]{arbook} to produce a strongly generating set of finitely presentable objects in $\mathbf{Flex}$. Our strongly generating set will consist of the terminal category $\mathbb{D}^0$, the free-living arrow $\mathbb{D}^1$, and the family of $2$-categories $\mathbb{D}^2[m,n]$ for $m,n \geq 0$, which we  now define.

In words, $\mathbb{D}^2[m,n]$ is the free $2$-category containing a $2$-cell whose source is the composite of a path of length $m$ and whose target is the composite of a path of length $n$. In a picture, $\mathbb{D}^2[m,n]$ is the free $2$-category on the following computad.
\begin{equation*}
\cd{
& \bullet \ar@{..}[r]  \dtwocell{ddr}{} & \bullet \ar[dr]^-{f_m} & \\
\bullet \ar[ur]^-{f_1} \ar[dr]_-{g_1} && &\bullet \\
& \bullet \ar@{..}[r] & \bullet \ar[ur]_-{g_n} &
}
\end{equation*}
Note that $\mathbb{D}^2[1,1] = \mathbb{D}^2$, the free $2$-category containing a $2$-cell.

\begin{lemma} \label{sglemma}
The flexible $2$-categories $\mathbb{D}^0$, $\mathbb{D}^1$, and $\mathbb{D}^2[m,n]$ for $m,n \geq 0$ form a strongly generating set for $\mathbf{Flex}$.
\end{lemma}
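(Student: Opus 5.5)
Since $\mathbf{Flex}$ has all small limits and colimits (by the Proposition in \S\ref{limsinflex}), it suffices to show that the family of hom-functors $\mathbf{Flex}(G,-)$, for $G$ ranging over $\mathbb{D}^0$, $\mathbb{D}^1$ and $\mathbb{D}^2[m,n]$, jointly reflects isomorphisms. Since $\mathbf{Flex}$ has equalisers, this family is then also jointly faithful, and so is strongly generating. The plan is therefore to take a morphism of flexible $2$-categories $F\colon A\longrightarrow B$ such that $\mathbf{Flex}(G,F)$ is a bijection for each such $G$, and to show that $F$ is bijective on objects, on atomic morphisms, and on $2$-cells. Such an $F$ is an isomorphism in $\mathbf{Free}$, hence an isomorphism of $2$-categories, hence an isomorphism in $\mathbf{Flex}$, because $\mathbf{Flex}$ is a replete subcategory of $\bm{2}\mathbf{Cat}$.

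The first two steps are at the level of underlying categories. A morphism $\mathbb{D}^0\longrightarrow A$ is just an object, so $F$ is bijective on objects. A morphism of flexible $2$-categories $\mathbb{D}^1\longrightarrow A$ must send the generating arrow to an atomic or identity morphism. So $\mathbf{Flex}(\mathbb{D}^1,A)$ is the set of atomic-or-identity morphisms of $A$, and $F$ is bijective on these. Identities correspond to identities, so $F$ restricts to a bijection on atomic morphisms. Since $A$ and $B$ are free, $UF$ is then an isomorphism in $\mathbf{Free}$, and in particular bijective on all morphisms, since a morphism is a unique path of atomics.

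Next come the $2$-cells. A morphism of flexible $2$-categories $\mathbb{D}^2[m,n]\longrightarrow A$ amounts to two data. The first is a pair of paths $(f_m,\ldots,f_1)$ and $(g_n,\ldots,g_1)$ of atomic-or-identity morphisms in $A$ with common endpoints. The second is a $2$-cell $f_m\circ\cdots\circ f_1\Longrightarrow g_n\circ\cdots\circ g_1$. Here I use that $\mathbb{D}^2[m,n]$ is the free $2$-category on a computad and that its underlying category is free on the displayed graph, so a $2$-functor out of it is determined by its values on generators. Now let $\beta\colon Ff\Longrightarrow Fg$ be a $2$-cell in $B$, where $f,g\colon x\longrightarrow y$ in $A$. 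Choose atomic decompositions of length $m$ and $n$; the identity case is $m=0$ or $n=0$. These define an element of $\mathbf{Flex}(\mathbb{D}^2[m,n],B)$ whose path data lift uniquely along $F$. Surjectivity of $\mathbf{Flex}(\mathbb{D}^2[m,n],F)$ then gives a $2$-cell $\alpha$ with $F\alpha=\beta$. Injectivity gives uniqueness: two such $\alpha$ with the same boundary define two elements of $\mathbf{Flex}(\mathbb{D}^2[m,n],A)$, and these have equal images. Thus $F$ is locally bijective on $2$-cells, and it is already bijective on objects and $1$-cells, so it is an isomorphism of $2$-categories.

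The main point requiring care is the description of $\mathbf{Flex}(\mathbb{D}^2[m,n],A)$. One must check that the underlying category of $\mathbb{D}^2[m,n]$ is free on the $m+n$ generating arrows, with no extra composites identified. One must also check that the morphism-of-free-categories condition allows generators to go to identities; this is exactly why the degenerate values of $m$ and $n$, and paths containing identities, cause no trouble. Finally, I will note that each of these objects is finitely presentable by \S\ref{fpfreecatsec} together with the finiteness of the $2$-cell data, though that is needed only for the subsequent local presentability argument.
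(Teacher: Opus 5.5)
Your proposal is correct and follows essentially the same route as the paper. Bijectivity of $\mathbf{Flex}(\mathbb{D}^0,F)$ and $\mathbf{Flex}(\mathbb{D}^1,F)$ makes the underlying functor of $F$ an isomorphism, and lifting the morphism $\mathbb{D}^2[m,n]\longrightarrow B$ determined by $\beta$ and the atomic decompositions gives a unique $\alpha$ with $F\alpha=\beta$. Your extra remarks (that joint conservativity together with equalisers yields a strong generator, and why identities and atomic morphisms correspond separately) only make explicit what the paper leaves implicit.
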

\begin{proof}
Let $F \colon A \longrightarrow B$ be a morphism of flexible $2$-categories and suppose that the induced function $\mathbf{Flex}(X,A) \longrightarrow \mathbf{Flex}(X,B)$ is a bijection whenever $X$ is one of the $2$-categories listed in the statement of the lemma. Taking $X=\mathbb{D}^0$ and $X=\mathbb{D}^1$, we see that the underlying functor of $F$ is an isomorphism.

Now, let $f,g$ be a parallel pair of morphisms in $A$, with $f = f_m \circ \cdots \circ f_1$ and $g = g_n \circ \cdots \circ g_1$ their atomic decompositions, and
 let $\beta \colon Ff \Longrightarrow Fg$ be a $2$-cell in $B$. Then $Ff = Ff_m \circ \cdots \circ Ff_1$ and $Fg = Fg_n \circ \cdots \circ Fg_1$ are atomic decompositions in $B$, and so there exists a morphism of flexible $2$-categories $\mathbb{D}^2[m,n] \longrightarrow B$ sending the non-identity $2$-cell in $\mathbb{D}^2[m,n]$ to $\beta$. Taking $X = \mathbb{D}^2[m,n]$, and recalling that $F$ is an isomorphism on underlying categories, we see that there exists a unique $2$-cell $\alpha \colon f \Longrightarrow g$ such that $F\alpha = \beta$. Hence $F$ is an isomorphism of $2$-categories.
\end{proof}

\begin{lemma} \label{fplemma}
A flexible $2$-category is finitely presentable in $\mathbf{Flex}$ if it is finitely presentable in $\bm{2}\mathbf{Cat}$.
\end{lemma}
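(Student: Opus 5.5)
Let $A$ be a flexible $2$-category that is finitely presentable in $\bm{2}\mathbf{Cat}$. We must show that $\mathbf{Flex}(A,-)$ preserves filtered colimits. The plan is to reduce this to the corresponding property of $\bm{2}\mathbf{Cat}(A,-)$, using two facts. First, the inclusion $\mathbf{Flex}\to\bm{2}\mathbf{Cat}$ preserves all small colimits (Proposition above). Second, $\mathbf{Flex}(A,B)$ is a subset of $\bm{2}\mathbf{Cat}(A,B)$, cut out by an atomicity condition that we expect to be checkable on finitely many morphisms.

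So let $D\colon J\to\mathbf{Flex}$ be a filtered diagram with colimit $B$ and coprojections $\lambda_j\colon D_j\to B$. The colimit is computed in $\bm{2}\mathbf{Cat}$, so the comparison map $\operatorname{colim}_j\bm{2}\mathbf{Cat}(A,D_j)\to\bm{2}\mathbf{Cat}(A,B)$ is a bijection. This map restricts to the comparison map for $\mathbf{Flex}(A,-)$. Injectivity on the flexible part is therefore immediate: $\operatorname{colim}_j\mathbf{Flex}(A,D_j)\to\operatorname{colim}_j\bm{2}\mathbf{Cat}(A,D_j)$ is injective, because filtered colimits of sets preserve monomorphisms.

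For surjectivity, take a morphism of flexible $2$-categories $F\colon A\to B$.
\begin{enumerate}[(i)]
\item Factor $F$ as $\lambda_j\circ G$ for some $2$-functor $G\colon A\to D_j$.
\item Show that after passing to some later stage $j\to k$, the composite $D(j\to k)\circ G$ is a morphism of flexible $2$-categories.
\end{enumerate}
By \S\ref{fpfreecatsec}, the underlying free category $UA$ has only finitely many atomic morphisms $a_1,\dots,a_r$, since it is finitely presentable in $\mathbf{Cat}$. This uses that $U\colon\bm{2}\mathbf{Cat}\to\mathbf{Cat}$ has a right adjoint preserving filtered colimits, or directly the argument there. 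Each $Fa_i$ is atomic or an identity in $B$, and it suffices to arrange the same property for each $Ga_i$ at some stage. Filteredness then handles all $i$ at once.

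The key step, and the expected main obstacle, is showing that atomicity or identity-ness of $Fa_i$ in $B$ is reflected at a sufficiently late stage of the diagram. We work in $UB=\operatorname{colim}_j UD_j$, a filtered colimit in $\mathbf{Free}$, which is computed as in $\mathbf{Gph}$ via the equivalence $\mathbf{Gph}\simeq\mathbf{Free}$.
\begin{enumerate}[(a)]
\item In a filtered colimit of reflexive graphs, every edge of the colimit comes from an edge at some stage.
\item Every morphism of the free colimit is a path of edges, represented at some stage.
\item Two paths become equal in the colimit exactly when they become equal at some later stage, because the transition maps are morphisms of free categories.
\end{enumerate}
Now suppose $Fa_i$ is the atomic edge $e$ in $B$. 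Choose a stage where $e$ is represented by an edge $e'$. Then $Ga_i$ and $e'$ become equal in the colimit, so they agree at some later stage $k$. Transition maps send atomic morphisms to atomic morphisms or identities, so there $Ga_i$ is atomic or an identity. The case where $Fa_i$ is an identity is handled in the same way. Choosing a common $k$ for all $i$ makes $D(j\to k)\circ G$ a morphism of flexible $2$-categories that factors $F$. This proves surjectivity.
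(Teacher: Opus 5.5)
Your argument is correct, and it is essentially the paper's proof unwound elementwise. The paper uses the pullback square (\ref{pbsquare}) to write $\mathbf{Flex}(A,-)\cong\mathbf{2Cat}(A,-)\times_{\mathbf{Cat}(UA,U-)}\mathbf{Free}(UA,U-)$, then commutes filtered colimits with this pullback in $\mathbf{Set}$: your injectivity step is the monomorphism half of that commutation, and your steps (a)--(c) do by hand what the paper obtains from $UA$ being finitely presentable in both $\mathbf{Free}$ and $\mathbf{Cat}$ (incidentally, (c) is most cleanly justified by noting that $\mathbf{Free}\to\mathbf{Cat}$ preserves colimits and that filtered colimits in $\mathbf{Cat}$ are computed on sets of objects and morphisms).
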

\begin{proof}
Let $A$ be a flexible $2$-category and suppose that it is finitely presentable in $\mathbf{2Cat}$. Then its underlying free category $UA$ is finitely presentable in $\mathbf{Cat}$ and hence also finitely presentable in $\mathbf{Free}$ (see \S\ref{fpfreecatsec}). By the pullback square (\ref{pbsquare}), we have the following pullback square in the functor category $\mathbf{Fun}(\mathbf{Flex},\mathbf{Set})$.
\begin{equation*}
\cd{
\mathbf{Flex}(A,-) \ar[r] \ar[d]_-U \fatterpullbackcorner & \mathbf{2Cat}(A,-) \ar[d]^-U \\
\mathbf{Free}(UA,U-) \ar[r] & \mathbf{Cat}(UA,U-)
}
\end{equation*}
Since the four functors in the pullback square (\ref{pbsquare}) preserve filtered colimits, and since $A \in \mathbf{2Cat}$, $UA \in \mathbf{Free}$, and $UA \in \mathbf{Cat}$ are finitely presentable, it follows from the commutativity of filtered colimits with finite limits in $\mathbf{Set}$ that the functor $\mathbf{Flex}(A,-) \colon \mathbf{Flex} \longrightarrow \mathbf{Set}$ preserves filtered colimits, and hence that $A$ is finitely presentable in $\mathbf{Flex}$.
\end{proof}

\begin{remark}
The converse of Lemma \ref{fplemma} is also true, as one may prove by showing that the path $2$-category functor $Q \colon \mathbf{2Cat} \longrightarrow \mathbf{Flex}$ preserves filtered colimits.
\end{remark}

Since the flexible $2$-categories $\mathbb{D}^0$, $\mathbb{D}^1$, and $\mathbb{D}^2[m,n]$ for $m,n \geq 0$ are manifestly finitely presentable in $\mathbf{2Cat}$, Lemmas \ref{sglemma} and \ref{fplemma} together imply the following proposition.

\begin{proposition} \label{lfpprop}
The category $\mathbf{Flex}$ is locally finitely presentable.
\end{proposition}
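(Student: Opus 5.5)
The plan is to invoke \cite[Theorem 1.11]{arbook}: a cocomplete category with a strongly generating set of finitely presentable objects is locally finitely presentable. The hypotheses can be checked one at a time, using only results already established in this section.

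First, cocompleteness. By the proposition of \S\ref{limsinflex}, $\mathbf{Flex}$ admits all small colimits, and indeed all small limits as well.

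Second, the strongly generating set. Lemma \ref{sglemma} states that the flexible $2$-categories $\mathbb{D}^0$, $\mathbb{D}^1$, and $\mathbb{D}^2[m,n]$ for $m,n \geq 0$ form a strongly generating set for $\mathbf{Flex}$. This is a set rather than a proper class, since it is indexed by pairs of natural numbers together with two extra objects.

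Third, finite presentability of each generator, which is the only step requiring any thought. By Lemma \ref{fplemma} it suffices to show that each generator is finitely presentable in $\mathbf{2Cat}$. Since $\mathbf{2Cat}$ is locally finitely presentable as the category of models of a finite limit sketch, an object is finitely presentable there when it is a finite colimit of the standard finitely presentable objects $\mathbb{D}^0$, $\mathbb{D}^1$, $\mathbb{D}^2$.
\begin{itemize}
\item The objects $\mathbb{D}^0$ and $\mathbb{D}^1$ are themselves among these standard objects.
\item The $2$-category $\mathbb{D}^2[m,n]$ is the free $2$-category on a finite computad. It is therefore the pushout of a finite coproduct of copies of $\mathbb{D}^1$, glued along their endpoints to form two paths of lengths $m$ and $n$ with common endpoints, along a single copy of $\partial\mathbb{D}^2 \longrightarrow \mathbb{D}^2$.
\end{itemize}
In the degenerate cases $m=0$ or $n=0$, one glues an endpoint-identified boundary instead. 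This is still a finite colimit.

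Combining the three steps, Theorem 1.11 of \cite{arbook} applies, and $\mathbf{Flex}$ is locally finitely presentable.
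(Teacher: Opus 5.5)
Your proof is correct and is exactly the paper's argument. You take cocompleteness from \S\ref{limsinflex}, the strong generator from Lemma \ref{sglemma}, and use Lemma \ref{fplemma} to reduce to finite presentability in $\mathbf{2Cat}$, which the paper calls manifest and which you justify by exhibiting each $\mathbb{D}^2[m,n]$ as a finite colimit of copies of $\mathbb{D}^0$, $\mathbb{D}^1$, $\mathbb{D}^2$ (the cases $m=0$ or $n=0$ need no separate treatment, since the same pushout along $\partial\mathbb{D}^2 \longrightarrow \mathbb{D}^2$ works with one boundary arrow sent to an identity).
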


\section{The left-induced model structure} \label{modstr}
The purpose of this section is to prove that the category $\mathbf{Flex}$ admits a model structure left-induced from Lack's model structure for $2$-categories along the inclusion functor $\mathbf{Flex} \longrightarrow \bm{2}\mathbf{Cat}$.

\subsection{Left-induced model structures} \label{leftysec}
A model structure on a category $\mathcal{M}$ is said to be \emph{left-induced} from a model structure on a category $\mathcal{N}$ along a functor $L \colon \mathcal{M} \longrightarrow \mathcal{N}$ if a morphism $f$ in $\mathcal{M}$ is a weak equivalence or cofibration precisely when $Lf$ is a weak equivalence or cofibration respectively in $\mathcal{N}$. Note that a model structure is determined by its classes of weak equivalences and cofibrations.

Given a model category $\mathcal{N}$ with class of weak equivalences $\mathcal{W}$ and class of cofibrations $\mathcal{C}$, to prove that there exists a (necessarily unique) model structure on a category $\mathcal{M}$ left-induced from $\mathcal{N}$ along a functor $L\colon \mathcal{M} \longrightarrow \mathcal{N}$, it is necessary and sufficient to show that the classes $L^{-1}(\mathcal{C})$ and $L^{-1}(\mathcal{C}\cap\mathcal{W})$ form the left classes of weak factorisation systems on $\mathcal{M}$, and that any morphism in $\mathcal{M}$ having the right lifting property with respect to the class $L^{-1}(\mathcal{C})$ belongs to the class $L^{-1}(\mathcal{W})$; this latter condition is called the ``acyclicity condition'' (see \cite{gmr}).  By \cite[Remark 3.8]{cellular}, if $L\colon \mathcal{M} \longrightarrow \mathcal{N}$ is a left adjoint functor between locally presentable categories and $\mathcal{N}$ is a combinatorial model category, the existence of the two weak factorisation systems is guaranteed, and so it suffices to verify the acyclicity condition alone; it follows also that the left-induced model structure on $\mathcal{M}$, if it exists, is  combinatorial.

\subsection{Lack's model structure for $\bm{2}$-categories} \label{lacksec}
In \cite{Lac02,Lac04}, Lack constructed a combinatorial model structure on the category $\mathbf{2Cat}$, which consists of the following classes of morphisms. A $2$-functor is a weak equivalence if and only if it is a \emph{biequivalence}, i.e.\ it is surjective on objects up to equivalence and an equivalence on hom-categories. A $2$-functor is a \emph{cofibration} if its underlying functor belongs to the class $\mathscr{I}$ (see \S\ref{wfsfree}) \cite[Proposition 4.12]{Lac02}; in particular, the cofibrant objects are precisely the flexible $2$-categories \cite[Theorem 4.8]{Lac02}. A $2$-functor $F \colon A \longrightarrow B$ is a fibration in Lack's model structure if and only if it is an \emph{equifibration}, i.e.\ it has the equivalence-lifting property (which states that for each object $a \in A$ and each equivalence $g \colon Fa \longrightarrow b$ in $B$, there exists an equivalence $f \colon a \longrightarrow a'$ in $A$ such that $Ff = g$) and is an isofibration on hom-categories; hence every $2$-category is fibrant in Lack's model structure.

The goal of this section is therefore to prove that there exists a model structure on $\mathbf{Flex}$ whose weak equivalences and cofibrations are those biequivalences and cofibrations respectively in $\mathbf{2Cat}$ that belong to the subcategory $\mathbf{Flex}$. Since the inclusion $\mathbf{Flex} \longrightarrow \mathbf{2Cat}$ is a left adjoint functor between locally finitely presentable categories by \S\ref{qsec} and Proposition \ref{lfpprop}, it suffices by \S\ref{leftysec} to check the acyclicity condition, which in our case amounts to the statement that 
any morphism of flexible $2$-categories having the right lifting property in $\mathbf{Flex}$ with respect to all cofibrations is a biequivalence.

\subsection{The acyclicity condition}
We shall verify the acyclicity condition by giving an explicit description of the weak factorisation system on $\mathbf{Flex}$ whose left class consists of the cofibrations in $\mathbf{Flex}$, and then observing that every morphism in the right class is a biequivalence.

\begin{proposition} \label{cofibprop}
A morphism of flexible $2$-categories is a cofibration if and only if it is injective on objects and faithful.
\end{proposition}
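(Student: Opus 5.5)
By definition of the left-induced structure (\S\ref{leftysec}), a morphism $F \colon A \longrightarrow B$ of flexible $2$-categories is a cofibration in $\mathbf{Flex}$ exactly when it is a cofibration in $\mathbf{2Cat}$. By Lack's characterisation recalled in \S\ref{lacksec}, this holds exactly when its underlying functor $UF$ lies in the class $\mathscr{I}$ of \S\ref{wfsfree}. So the proposition reduces to a statement about $UF \colon UA \longrightarrow UB$, and the plan is simply to chain the equivalences already established:
\[
F \text{ cofibration in } \mathbf{Flex}
\iff F \text{ cofibration in } \mathbf{2Cat}
\iff UF \in \mathscr{I}
\iff UF \text{ injective}.
\]
Here $UF$ is a morphism of free categories because $F$ is a morphism of flexible $2$-categories. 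The final equivalence is the one noted in \S\ref{wfsfree}: a morphism of free categories belongs to $\mathscr{I}$ if and only if it is injective on objects and faithful. The nontrivial direction of that equivalence comes from \cite[Corollary 4.12]{Lac02}.

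Being injective on objects and faithful is a property of the underlying functor, so it holds for $F$ if and only if it holds for $UF$, which completes the argument. The only point to take care over is that ``faithful'' here means faithful on underlying categories, that is, injective on $1$-cells between each pair of objects. No condition on $2$-cells enters, which matches the fact that Lack's cofibrations are detected on underlying functors.

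The main obstacle would be the direction ``injective $\Rightarrow$ in $\mathscr{I}$'', if one did not want to rely on the remark in \S\ref{wfsfree}. In that case I would argue directly as follows. An injective morphism of free categories is a monomorphism in $\mathbf{Free}$, which is equivalent to the presheaf category $\mathbf{Gph}$. It is therefore a relative cell complex built by pushouts and transfinite composites of the generators $\emptyset \longrightarrow \mathbb{D}^0$ and $\partial\mathbb{D}^1 \longrightarrow \mathbb{D}^1$: one attaches the missing objects and then the missing atomic morphisms. The inclusion $\mathbf{Free} \longrightarrow \mathbf{Cat}$ preserves colimits, so the same cell presentation exhibits $UF$ as a relative $\{\emptyset \to \mathbb{D}^0, \partial\mathbb{D}^1 \to \mathbb{D}^1\}$-cell complex in $\mathbf{Cat}$, hence a member of $\mathscr{I}$. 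The one point to verify in this step is that the monomorphisms of $\mathbf{Gph} \simeq \mathbf{Free}$ are generated by exactly these two maps. This holds because, for reflexive graphs, the degenerate edges are already present and need not be attached.
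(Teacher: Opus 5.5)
Your proposal is correct and is the paper's own argument. A morphism of flexible $2$-categories is a cofibration exactly when its underlying functor lies in $\mathscr{I}$, and \S\ref{wfsfree} identifies the morphisms of free categories in $\mathscr{I}$ with the injective ones.

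One correction to your side remark: you have the difficulty of the two directions reversed. Your cell-complex sketch proves the easy direction, injective $\Rightarrow$ $\mathscr{I}$, which follows from the cofibrantly generated weak factorisation system on $\mathbf{Free}$ and the fact that $\mathbf{Free} \longrightarrow \mathbf{Cat}$ preserves colimits. The direction the paper attributes to \cite[Corollary 4.12]{Lac02} is the converse: members of $\mathscr{I}$ are injective on objects and faithful. For morphisms of free categories even this follows quickly. Factorise $F = p \circ i$ in $\mathbf{Free}$ with $i$ injective and $p$ surjective; then $p$ is full and surjective on objects, so lifting $F$ against $p$ gives $s$ with $s \circ F = i$, and injectivity of $i$ forces injectivity of $F$.
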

\begin{proof}
By definition, a $2$-functor is a cofibration if and only if its underlying functor belongs to the class of functors $\mathscr{I}$. But we saw in \S\ref{wfsfree} that a morphism of free categories belongs to $\mathscr{I}$ if and only if it is injective, i.e.\ injective on objects and faithful.
\end{proof}

We may therefore construct the weak factorisation system in question by an application of the following lemma.
\begin{lemma} \label{wfslemma}
Let $P \colon \mathcal{E} \longrightarrow \mathcal{B}$ be a Grothendieck fibration and let $(\mathcal{L},\mathcal{R})$ be a weak factorisation system on $\mathcal{B}$. There exists a weak factorisation system on $\mathcal{E}$ whose left class is the class $P^{-1}(\mathcal{L})$ and whose right class is the intersection of the class $P^{-1}(\mathcal{R})$ with the class of $P$-cartesian morphisms.
\end{lemma}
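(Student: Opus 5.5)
We verify the two defining conditions of a weak factorisation system directly: every morphism of $\mathcal{E}$ factors as a morphism of $P^{-1}(\mathcal{L})$ followed by a cartesian morphism in $P^{-1}(\mathcal{R})$, and each class is exactly the class of morphisms having the appropriate lifting property against the other. Write $\mathcal{R}'$ for the class of $P$-cartesian morphisms lying over $\mathcal{R}$.

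\emph{Factorisation.} Let $f \colon X \longrightarrow Y$ be a morphism in $\mathcal{E}$.
\begin{enumerate}[(i)]
\item Factor $Pf = r \circ l$ in $\mathcal{B}$ with $l \colon PX \longrightarrow B$ in $\mathcal{L}$ and $r \colon B \longrightarrow PY$ in $\mathcal{R}$.
\item Take a cartesian lift $\bar r \colon Z \longrightarrow Y$ of $r$; then $\bar r \in \mathcal{R}'$.
\item By the cartesian property of $\bar r$, the morphism $f$ factors uniquely as $f = \bar r \circ g$ with $Pg = l$. Hence $g \in P^{-1}(\mathcal{L})$.
\end{enumerate}

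\emph{Lifting.} Consider a commutative square $v \circ i = p \circ u$ in $\mathcal{E}$, with $i \colon A \longrightarrow B'$ in $P^{-1}(\mathcal{L})$ and $p \colon C \longrightarrow D$ in $\mathcal{R}'$.
\begin{enumerate}[(i)]
\item Apply $P$ and use that $(\mathcal{L},\mathcal{R})$ is a weak factorisation system to obtain a diagonal $d \colon PB' \longrightarrow PC$ with $d \circ Pi = Pu$ and $Pp \circ d = Pv$.
\item Since $p$ is cartesian, there is a unique $\bar d \colon B' \longrightarrow C$ with $p\bar d = v$ and $P\bar d = d$.
\item The two morphisms $\bar d i$ and $u$ both satisfy $p \circ (-) = p u$ and lie over $d \circ Pi = Pu$. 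By the uniqueness clause of the cartesian property, $\bar d i = u$, so $\bar d$ is the required lift.
\end{enumerate}

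\emph{Closure of the right class.} Suppose $p \colon C \longrightarrow D$ has the right lifting property against $P^{-1}(\mathcal{L})$.
\begin{enumerate}[(i)]
\item Factor $p = \bar r \circ g$ as above, with $g \in P^{-1}(\mathcal{L})$ and $\bar r \in \mathcal{R}'$.
\item Lift in the square $\bar r \circ g = p \circ \mathrm{id}_C$ to exhibit $p$ as a retract of $\bar r$.
\item $P$-cartesian morphisms are closed under retracts, since a retract of an isomorphism in the comma-type comparison is an isomorphism. The class $\mathcal{R}$ is also closed under retracts, and $P$ preserves retracts. Hence $p \in \mathcal{R}'$.
\end{enumerate}

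\emph{Closure of the left class.} Suppose $i \colon A \longrightarrow B'$ has the left lifting property against $\mathcal{R}'$. We must show that $Pi \in \mathcal{L}$, i.e.\ that $Pi$ lifts against every $r \in \mathcal{R}$. This is the step needing the most care.

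We plan to lift a square downstairs to $\mathcal{E}$ by reindexing. Let $Pi \colon PA \longrightarrow PB'$ and $r \colon E \longrightarrow F$, with $a \colon PA \longrightarrow E$ and $b \colon PB' \longrightarrow F$ satisfying $r a = b \circ Pi$.
\begin{enumerate}[(i)]
\item Take a cartesian lift $\bar b \colon Y \longrightarrow \ ?$ of $b$; this requires an object over $F$. Instead, lift in the opposite direction: choose a cartesian lift $\bar r$ of $r$ with codomain a cartesian lift of $b$. This does not obviously work, since cartesian lifts go towards a given codomain.
\item The fix is to pull back $r$ along $b$. This needs pullbacks in $\mathcal{B}$, which we do not have, but we can avoid them. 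Take $\bar b \colon B' \longrightarrow \ ?$; again we have no codomain.
\item Alternatively, use a retract argument. Factor $i = \bar r \circ g$ with $g \in P^{-1}(\mathcal{L})$ and $\bar r \in \mathcal{R}'$, then lift $i$ against $\bar r$ in the square $\bar r \circ g = i \circ \mathrm{id}_A$, read so that $i$ lifts against $\bar r$. This gives $s$ with $s i = g$ and $\bar r s = \mathrm{id}$, exhibiting $i$ as a retract of $g$.
\end{enumerate}
Then $Pi$ is a retract of $Pg \in \mathcal{L}$, and since $\mathcal{L}$ is closed under retracts, $Pi \in \mathcal{L}$. This retract argument replaces the lifting attempt in (i) and (ii).
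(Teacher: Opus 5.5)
Your proof is correct and follows the paper's route. It uses the same factorisation via a cartesian lift of the $\mathcal{R}$-part, and your explicit diagonal $\bar d$ is exactly the element-level content of the paper's observation that $f \mathbin{\widehat{\pitchfork}} g$ is a pullback of $Pf \mathbin{\widehat{\pitchfork}} Pg$; your two closure paragraphs simply spell out the standard retract argument that the paper compresses into ``both classes are closed under retracts''. You should delete the abandoned attempts (i)--(ii) in the last part, since only the retract argument (iii) is needed.
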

\begin{proof}
To factorise a morphism $f \colon X \longrightarrow Y$ in $\mathcal{E}$ as $f = h \circ g$, where $g$ belongs to the left class and $h$ belongs to the right class, first factorise $Pf$ as $r\circ l$, where $l \in \mathcal{L}$ and $r \in \mathcal{R}$. We may take $h \colon Z \longrightarrow Y$ to be the $P$-cartesian lift  of $r$, and then take $g \colon X \longrightarrow Z$ to be the unique morphism such that $Pg = l$ and $f = h\circ g$.  

It is immediate that both classes are closed under retracts. It remains to show, for each morphism $f \colon A \longrightarrow B$ in the left class and each morphism $g \colon C \longrightarrow D$ in the right class, that $f$ and $g$ are weakly orthogonal, i.e.\ that their  pullback-hom
$$f \mathbin{\widehat{\pitchfork}} g \colon \mathcal{E}(B,C) \longrightarrow \mathcal{E}(A,C) \times_{\mathcal{E}(A,D)} \mathcal{E}(B,D)$$
is surjective. But one can show that, since $g$ is $P$-cartesian, this pullback-hom is a pullback of the pullback-hom $Pf \mathbin{\widehat{\pitchfork}} Pg$, which is surjective since $Pf \in \mathcal{L}$ and $Pg \in \mathcal{R}$ are weakly orthogonal. Therefore $f \mathbin{\widehat{\pitchfork}} g$ is surjective.
\end{proof}

\begin{proposition} \label{wfsprop}
There exists a weak factorisation system on $\mathbf{Flex}$ whose left class consists of the cofibrations in $\mathbf{Flex}$, and to whose right class a morphism of flexible $2$-categories belongs if and only if its underlying functor is a surjection of free categories and it is locally fully faithful.
\end{proposition}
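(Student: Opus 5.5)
Apply Lemma \ref{wfslemma} to the forgetful functor $U \colon \mathbf{Flex} \longrightarrow \mathbf{Free}$, equipped with the weak factorisation system on $\mathbf{Free}$ of \S\ref{wfsfree}, whose left class is the injective morphisms and whose right class is the surjective morphisms. Three things must be checked.

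\emph{First}, $U \colon \mathbf{Flex} \longrightarrow \mathbf{Free}$ is a Grothendieck fibration whose cartesian morphisms are exactly the locally fully faithful morphisms of flexible $2$-categories. This follows from the pullback square (\ref{pbsquare}): Grothendieck fibrations are stable under pullback, and a morphism in the pullback is cartesian iff its image in $\mathbf{2Cat}$ is $U$-cartesian. Concretely, given a flexible $2$-category $B$ and a morphism of free categories $F \colon A \longrightarrow UB$, the $U$-cartesian lift $A' \longrightarrow B$ constructed in $\mathbf{2Cat}$ has underlying category $A$, which is free. So $A'$ is flexible and the lift is a morphism of flexible $2$-categories. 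Its universal property in $\mathbf{Flex}$ is inherited from that in $\mathbf{2Cat}$: the unique induced $2$-functor has underlying functor a given morphism of free categories, hence lies in $\mathbf{Flex}$. Conversely, cartesian morphisms in $\mathbf{Flex}$ agree up to isomorphism with these lifts, hence are locally fully faithful. Since $\mathbf{Flex}$ is replete, this isomorphism-closure causes no trouble.

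\emph{Second}, Lemma \ref{wfslemma} gives a weak factorisation system on $\mathbf{Flex}$. Its left class consists of the morphisms whose underlying functor is injective, which by Proposition \ref{cofibprop} are exactly the cofibrations. Its right class consists of the locally fully faithful morphisms whose underlying functor is a surjection of free categories, as stated.

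The main (mild) obstacle is the first step: making sure that $U$-cartesian lifts in $\mathbf{2Cat}$ restrict to $\mathbf{Flex}$ and remain cartesian there, given that $\mathbf{Flex}$ is a non-full subcategory. This requires that factorisations through a lift are automatically morphisms of flexible $2$-categories, which holds because membership in $\mathbf{Flex}$ is detected on underlying functors.
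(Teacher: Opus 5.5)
Your proposal is correct and follows the paper's proof: the paper likewise notes that $U \colon \mathbf{Flex} \longrightarrow \mathbf{Free}$ is a Grothendieck fibration by the pullback square (\ref{pbsquare}), with cartesian morphisms exactly the locally fully faithful ones, and then applies Lemma \ref{wfslemma} to the (injective, surjective) weak factorisation system on $\mathbf{Free}$ together with Proposition \ref{cofibprop}. Your extra check that cartesian lifts computed in $\mathbf{2Cat}$ lie in, and stay cartesian in, the non-full subcategory $\mathbf{Flex}$ spells out what the paper leaves implicit in the pullback-stability of fibrations.
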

\begin{proof}
Note that the forgetful functor $U \colon \mathbf{Flex} \longrightarrow \mathbf{Free}$ is a Grothendieck fibration by the pullback square (\ref{pbsquare}), and that a morphism of flexible $2$-categories is $U$-cartesian  if and only if it is locally fully faithful. Hence the result follows from Proposition \ref{cofibprop} and an application of Lemma \ref{wfslemma} to the (injective, surjective) weak factorisation system  on $\mathbf{Free}$.
\end{proof}

\begin{remark}
The weak factorisation system of Proposition \ref{wfsprop} is cofibrantly generated by the boundary inclusions $\emptyset \longrightarrow \mathbb{D}^0$, $\partial\mathbb{D}^1 \longrightarrow \mathbb{D}^1$, and $\partial\mathbb{D}^2[m,n] \longrightarrow \mathbb{D}^2[m,n]$ (i.e.\ the inclusion of the underlying category) for $m,n\geq 0$, and the boundary-preserving projections displayed below for $m,n \geq 0$.
\begin{equation*}
\left(\cd{
& \bullet \ar@{..}[r]  \dtwocell{dd}{} & \dtwocell{dd}{} \bullet \ar[dr]^-{f_m} & \\
\bullet \ar[ur]^-{f_1} \ar[dr]_-{g_1} && &\bullet \\
& \bullet \ar@{..}[r] & \bullet \ar[ur]_-{g_n} &
}\right)
\qquad
\xrightarrow{\hspace*{3em}}
\qquad
\left(\cd{
& \bullet \ar@{..}[r]  \dtwocell{ddr}{} & \bullet \ar[dr]^-{f_m} & \\
\bullet \ar[ur]^-{f_1} \ar[dr]_-{g_1} && &\bullet \\
& \bullet \ar@{..}[r] & \bullet \ar[ur]_-{g_n} &
}\right)
\end{equation*}
\end{remark}

We may now deduce that the acyclicity condition holds.
\begin{proposition}
Every morphism of flexible $2$-categories having the right lifting property in $\mathbf{Flex}$ with respect to all cofibrations is a biequivalence.
\end{proposition}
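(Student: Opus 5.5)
By Proposition \ref{wfsprop}, a morphism of flexible $2$-categories $F \colon A \longrightarrow B$ has the right lifting property in $\mathbf{Flex}$ with respect to all cofibrations if and only if it lies in the right class of the weak factorisation system constructed there. Equivalently, its underlying functor $UF$ is a surjection of free categories and $F$ is locally fully faithful. So I will fix such an $F$ and check the two defining conditions of a biequivalence directly from these two properties.

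For surjectivity on objects up to equivalence: a surjection of free categories is by definition surjective on objects, so $F$ is in fact surjective on objects on the nose. This is stronger than what is required.

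For the hom-categories, fix objects $x,y \in A$ and consider the functor $F_{x,y} \colon A(x,y) \longrightarrow B(Fx,Fy)$. Local full faithfulness says precisely that each $F_{x,y}$ is fully faithful. It therefore remains to show that $F_{x,y}$ is essentially surjective, and I will show it is in fact surjective on objects. As noted in \S\ref{wfsfree}, a surjective morphism of free categories is full. To spell this out, take $g \colon Fx \longrightarrow Fy$ in $B$ with atomic decomposition $g = g_n \circ \cdots \circ g_1$, with intermediate objects $b_0 = Fx, b_1, \ldots, b_n = Fy$. Using surjectivity on objects, choose preimages $a_i$ of the intermediate objects $b_i$, taking $a_0 = x$ and $a_n = y$. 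Then lift each $g_i$ to an atomic or identity morphism $f_i \colon a_{i-1} \longrightarrow a_i$ in $A$. The composite $f = f_n \circ \cdots \circ f_1$ then satisfies $Ff = g$. Hence each $F_{x,y}$ is surjective on objects and fully faithful, and in particular an equivalence of categories.

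Combining the two previous paragraphs, $F$ is surjective on objects and an equivalence on hom-categories, so it is a biequivalence in the sense of \S\ref{lacksec}. Indeed $F$ is even a surjective-on-objects, full, locally fully faithful $2$-functor, i.e.\ a trivial fibration in Lack's model structure. No step presents a real obstacle. The only point needing care is the fullness argument above, which uses that the lifting condition for surjections applies to atomic morphisms between arbitrary chosen preimages of objects; this is exactly what the definition provides.
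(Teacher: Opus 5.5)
Your argument is correct and follows the paper's proof: Proposition \ref{wfsprop} identifies the morphisms with the right lifting property as the locally fully faithful morphisms whose underlying functor is a surjection of free categories, and such a surjection is surjective on objects and full, so $F$ is a biequivalence (indeed a trivial fibration in Lack's model structure). One small gap is the length-zero case $g = 1_{Fx}$ with $Fx = Fy$ but $x \neq y$, where your prescription $a_0 = x$, $a_n = y$ is inconsistent; there you should instead lift $g$ directly to an atomic or identity morphism $x \longrightarrow y$, which the definition of surjection provides because it explicitly covers identity morphisms.
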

\begin{proof}
Proposition \ref{wfsprop} implies that a morphism of flexible $2$-categories has the right lifting property in $\mathbf{Flex}$ with respect to all cofibrations if and only if its underlying functor is a surjection of free categories and it is locally fully faithful. But every such morphism is a biequivalence, since every surjection of free categories is surjective on objects and full.
\end{proof}

We have therefore proven the existence of the left-induced model structure on $\mathbf{Flex}$.

\begin{theorem}
There exists a combinatorial model structure on the category $\mathbf{Flex}$ in which a morphism of flexible $2$-categories is a weak equivalence if and only if it is a biequivalence, and  a cofibration if and only if it is injective on objects and faithful. Every object in this model category is cofibrant.
\end{theorem}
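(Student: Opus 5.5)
The theorem essentially assembles the results of this section, so the plan is to invoke the existence criterion for left-induced model structures recalled in \S\ref{leftysec}. The inclusion $\mathbf{Flex} \longrightarrow \mathbf{2Cat}$ is a left adjoint: its right adjoint is the path $2$-category functor $Q$ of \S\ref{qsec}. Its domain $\mathbf{Flex}$ is locally finitely presentable by Proposition \ref{lfpprop}, and Lack's model structure on $\mathbf{2Cat}$ is combinatorial (\S\ref{lacksec}). By \cite[Remark 3.8]{cellular}, the two required weak factorisation systems on $\mathbf{Flex}$ therefore exist. It remains only to verify the acyclicity condition, which is exactly the proposition immediately preceding the theorem. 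Hence the left-induced model structure exists, and it is combinatorial by the same remark.

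Next I would identify the two classes. By definition of left-induction, a morphism of flexible $2$-categories is a weak equivalence precisely when its image in $\mathbf{2Cat}$ is a weak equivalence in Lack's model structure, that is, a biequivalence. Likewise, it is a cofibration precisely when its image is a cofibration in $\mathbf{2Cat}$. Proposition \ref{cofibprop} identifies these cofibrations as the morphisms that are injective on objects and faithful.

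Finally, for the claim that every object is cofibrant, I would note that the initial object of $\mathbf{Flex}$ is the empty $2$-category. This holds because the inclusion into $\mathbf{2Cat}$ preserves colimits, and $\emptyset$ is indeed flexible. For any flexible $A$, the unique morphism $\emptyset \longrightarrow A$ is vacuously injective on objects and faithful, so it is a cofibration. Alternatively, its image is a cofibration in $\mathbf{2Cat}$ because $A$ is flexible, and the flexible $2$-categories are exactly Lack's cofibrant objects.

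There is no real obstacle, since the substantive work (the weak factorisation system of Proposition \ref{wfsprop} and the acyclicity condition) is already done. The only point needing care is that \cite[Remark 3.8]{cellular} applies. This requires that both categories be locally presentable and that the functor be a left adjoint, and both facts are established in \S\ref{qsec} and Proposition \ref{lfpprop}.
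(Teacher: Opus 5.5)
Your proposal is correct and follows the paper's route. Existence and combinatoriality come from the left-induction criterion for a left adjoint between locally presentable categories \cite[Remark 3.8]{cellular}, which reduces everything to the acyclicity condition (established via Proposition~\ref{wfsprop}); the two classes are then read off from left-induction and Proposition~\ref{cofibprop}, and your argument that every object is cofibrant is exactly the intended one.
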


It is immediate that the left-induced model structure on $\mathbf{Flex}$ is Quillen equivalent to Lack's model structure on $\mathbf{2Cat}$. 
\begin{theorem} \label{quillenthm}
The adjunction
\begin{equation*}
\xymatrix{
\mathbf{Flex} \ar@<1.5ex>[rr]_-{\hdash}&& \ar@<1.5ex>[ll]^-{Q} \mathbf{2Cat}
}
\end{equation*}
is a Quillen equivalence between the left-induced model structure on $\mathbf{Flex}$ and Lack's model structure on $\mathbf{2Cat}$.
\end{theorem}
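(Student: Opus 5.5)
The adjunction is a Quillen adjunction by construction: the left adjoint is the inclusion $\mathbf{Flex} \longrightarrow \mathbf{2Cat}$, and since the model structure on $\mathbf{Flex}$ is left-induced along it, the inclusion preserves cofibrations and trivial cofibrations. In fact it preserves and reflects both weak equivalences and cofibrations.

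For the Quillen equivalence, I will use the standard criterion for a left Quillen functor that reflects weak equivalences between cofibrant objects. Since every object of $\mathbf{Flex}$ is cofibrant and every object of $\mathbf{2Cat}$ is fibrant, the adjunction is a Quillen equivalence if and only if two conditions hold:
\begin{enumerate}[(i)]
\item the inclusion reflects weak equivalences;
\item for each $2$-category $A$, the derived counit is a weak equivalence.
\end{enumerate}
Condition (i) holds by the definition of a left-induced model structure. For (ii), because every object of $\mathbf{Flex}$ is cofibrant, no cofibrant replacement of $QA$ is needed, so the derived counit is just the counit $\varepsilon_A \colon QA \longrightarrow A$. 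Alternatively, I can verify directly that for flexible $X$ and any $A$, a morphism $X \longrightarrow QA$ is a biequivalence if and only if its adjunct $X \longrightarrow QA \longrightarrow A$ is one. This follows by $2$-out-of-$3$ once $\varepsilon_A$ is known to be a biequivalence.

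The only real step, therefore, is to show that $\varepsilon_A$ is a biequivalence. By \S\ref{qsec}, $\varepsilon_A$ is bijective on objects, full, and locally fully faithful, being the $U$-cartesian lift of $\varepsilon_{UA}$, which is bijective on objects and full. Bijectivity on objects gives surjectivity on objects. On each hom-category, the functor $QA(x,y) \longrightarrow A(x,y)$ is fully faithful by local full faithfulness, and surjective on objects because $\varepsilon_{UA}$ is full. It is therefore an equivalence, so $\varepsilon_A$ is a biequivalence; indeed it is a trivial fibration in Lack's model structure.

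I expect no genuine obstacle here. The only care needed is in citing the correct form of the Quillen equivalence criterion: a left Quillen functor that reflects weak equivalences between cofibrant objects, together with the derived counit on fibrant objects being a weak equivalence, gives a Quillen equivalence.
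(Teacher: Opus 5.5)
Your proof is correct, and its central step, that $\varepsilon_A \colon QA \longrightarrow A$ is a biequivalence because it is bijective on objects, full, and locally fully faithful, is the same fact the paper relies on. The difference is in which Quillen equivalence criterion you use.

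The paper notes that every object of $\mathbf{Flex}$ is cofibrant and every $2$-category is fibrant, so the unit and counit are already the derived unit and counit. It then checks that both are biequivalences. For the unit it uses the explicit description of $\eta_A \colon A \longrightarrow QA$ as bijective on objects, faithful, essentially full, and locally fully faithful.

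You replace the check on the unit with the fact that the inclusion reflects weak equivalences, which is built into the definition of a left-induced model structure. Your route shows the unit check is actually redundant. The triangle identity gives $\varepsilon_A \circ \eta_A = 1_A$ in $\mathbf{2Cat}$, so $\eta_A$ is a biequivalence by two-out-of-three once $\varepsilon_A$ is. It also makes clear that the argument works for any model structure left-induced along a left adjoint whose derived counit is a weak equivalence.

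The paper's version has its own advantage: it records the explicit properties of $\eta_A$, which it needs later anyway. For example, the characterisation of fibrant objects uses that $\eta_A$ is a trivial cofibration.
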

\begin{proof}
Since the left adjoint preserves weak equivalences and cofibrations by construction, the adjunction is a Quillen adjunction. It is moreover a Quillen equivalence because, for each flexible $2$-category $A$ and each $2$-category $B$, the unit and counit $2$-functors $\eta_A \colon A \longrightarrow QA$ and $\varepsilon_B \colon QB \longrightarrow B$ are biequivalences by \S\ref{qsec}.
\end{proof}

\begin{remark} \label{algcofrmk}
It was observed in Remark \ref{comonadremark} that $\mathbf{Flex}$ is isomorphic to the category of coalgebras for the path $2$-category comonad on $\mathbf{2Cat}$. Since, for every $2$-category $A$, the path $2$-category $QA$ is flexible and the counit $2$-functor $\varepsilon_A \colon QA \longrightarrow A$ is a biequivalence, this comonad is a cofibrant replacement comonad for Lack's model structure on $\mathbf{2Cat}$, and $\mathbf{Flex}$ is therefore a category of algebraically cofibrant objects for this model category.

Note that Garner's algebraic small object argument \cite{smallobject} applied to the set of generating cofibrations for Lack's model structure given in \cite[\S3]{Lac02} yields a different cofibrant replacement comonad, namely the pseudofunctor classifier comonad on $\mathbf{2Cat}$ (whereas the path $2$-category comonad $Q$ is the \emph{normal} pseudofunctor classifier comonad). The category of coalgebras for this comonad is isomorphic to the subcategory $\mathbf{Flex'}$ of $\mathbf{Flex}$ containing all flexible $2$-categories and those $2$-functors that preserve atomic morphisms. 

While this subcategory $\mathbf{Flex'}$ has many properties in common with $\mathbf{Flex}$,  it does not admit a model structure left-induced from Lack's model structure along the inclusion $\mathbf{Flex'} \longrightarrow \mathbf{2Cat}$. This is because the acyclicity condition fails to hold: the morphism $\mathbb{D}^0 + \mathbb{D}^0 \longrightarrow \mathbb{D}^0$ has the right lifting property in $\mathbf{Flex'}$ with respect to all cofibrations, but it is not a biequivalence. This disproves a conjecture of Ching and Riehl \cite[p.\ 172]{chingriehl}. (See \cite[Example 5.5]{bourkehenry} for a simpler variant of this counterexample.)
\end{remark}

\section{Fibrant objects and bicategories} \label{fibob}
In this section, we characterise the fibrant objects in the left-induced model structure on $\mathbf{Flex}$, and prove that the full subcategory of fibrant objects is equivalent to the category of bicategories and normal pseudofunctors. 

\subsection{Fibrant flexible $2$-categories}
We give several characterisations of the fibrant objects in the left-induced model structure on $\mathbf{Flex}$ in the following theorem.

\begin{theorem} \label{fibprop}
Let $A$ be a flexible $2$-category. The following are equivalent:
\begin{enumerate}[\normalfont(i)]
\item $A$ is a fibrant object in the left-induced model structure on $\mathbf{Flex}$;
\item the unit morphism $\eta_A \colon A \longrightarrow QA$ admits a retraction in $\mathbf{Flex}$;
\item $A$ is a retract of $QB$ in $\mathbf{Flex}$ for some $2$-category $B$;
\item $A$ has the right lifting property in $\mathbf{Flex}$ with respect to $\eta_{[2]} \colon [2] \longrightarrow Q[2]$;
\item every morphism in $A$ is isomorphic (via an invertible $2$-cell) to an atomic or identity morphism.
\end{enumerate}
\end{theorem}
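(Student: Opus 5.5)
The plan is to establish two cycles of implications: (i)$\Rightarrow$(ii)$\Rightarrow$(iii)$\Rightarrow$(i), and (iii)$\Rightarrow$(v)$\Rightarrow$(ii), together with (v)$\Leftrightarrow$(iv).

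\emph{The model-categorical cycle.} For (i)$\Rightarrow$(ii), observe that $\eta_A$ is injective on objects and faithful, so it is a cofibration by Proposition \ref{cofibprop}. It is also a biequivalence by \S\ref{qsec}, so it is a trivial cofibration. Lifting in the square formed by $\mathrm{id}_A$ and $A \to \mathbb{D}^0$ against $\eta_A$ then gives a retraction. (ii)$\Rightarrow$(iii) is immediate, taking $B=A$. For (iii)$\Rightarrow$(i), every $2$-category is fibrant in Lack's model structure, and $Q$ is right Quillen by Theorem \ref{quillenthm}, so $QB$ is fibrant. Retracts of fibrant objects are fibrant.

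\emph{The combinatorial cycle.} For (iii)$\Rightarrow$(v), let $s\colon A\to QB$ and $r\colon QB\to A$ in $\mathbf{Flex}$ satisfy $rs=1$. Every $1$-cell $(h_n,\dots,h_1)$ of $QB$ is isomorphic, via the invertible $2$-cell named by an identity, to the path $(h_n\circ\cdots\circ h_1)$, which has length $\le 1$ and so is atomic or an identity. Given a morphism $f$ of $A$, choose such an isomorphism $sf\cong g$. Applying $r$ gives $f=rsf\cong rg$, and $rg$ is atomic or an identity because $r$ is a morphism of free categories.

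For (v)$\Rightarrow$(ii), define $r\colon QA\to A$ as follows. It is the identity on objects. On each atomic $1$-cell of $QA$, which is a single non-identity morphism $h$ of $A$, it takes the value of a chosen atomic or identity morphism $\bar h$ with a chosen invertible $2$-cell $\theta_h\colon h\cong \bar h$. We arrange that $\bar h=h$ and $\theta_h=1$ whenever $h$ is already atomic. Extend $r$ freely to paths. A $2$-cell of $QA$ between paths is named by a $2$-cell $\alpha$ between the composites in $A$, and we send it to the conjugate of $\alpha$ by the horizontal composites of the $\theta$'s. Functoriality for vertical and horizontal composition follows because the conjugating isomorphisms are built compatibly with concatenation. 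By construction $r$ sends atomic $1$-cells to atomic or identity morphisms. Finally $r\eta_A=1$: $\eta_A$ sends $f$ to its path of atoms, on which all the $\theta$'s are identities.

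\emph{The equivalence (iv)$\Leftrightarrow$(v).} First describe $Q[2]$ explicitly. Its atomic $1$-cells are $01$, $12$ and $02$. The hom-category from $0$ to $2$ is the indiscrete category on the two objects $(02)$ and $(12,01)$, since $[2]$ has only identity $2$-cells.

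For (v)$\Rightarrow$(iv), a map $[2]\to A$ picks out composable morphisms $f,g$, each atomic or an identity. Choose an atomic or identity $h$ together with an invertible $2$-cell $\theta\colon g\circ f\cong h$. Send $02$ to $h$ and send the two nontrivial $2$-cells to $\theta^{\pm1}$. This is a morphism of flexible $2$-categories extending the given map.

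For (iv)$\Rightarrow$(v), write a morphism as an atomic decomposition $f_n\circ\cdots\circ f_1$ and argue by induction on $n$. By the inductive hypothesis, $f_{n-1}\circ\cdots\circ f_1$ is isomorphic to an atomic or identity morphism $k$. Lifting the map $[2]\to A$ that picks out $(k,f_n)$ along $\eta_{[2]}$ yields an atomic or identity $1$-cell isomorphic to $f_n\circ k$, and hence isomorphic to $f_n\circ\cdots\circ f_1$.

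\emph{Expected obstacle.} I expect the main bookkeeping to lie in (v)$\Rightarrow$(ii): checking that the conjugation assignment on $2$-cells respects horizontal composition, and that $r$ is well defined on paths. This should reduce to the interchange law together with the fact that the conjugating isomorphism for a concatenated path is the horizontal composite of those for its pieces.
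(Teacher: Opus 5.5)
Your proof is correct. It differs from the paper's only in how conditions (iv) and (v) are connected to the others.

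Several steps coincide with the paper's proof:
\begin{itemize}
\item the cycle (i)$\Rightarrow$(ii)$\Rightarrow$(iii)$\Rightarrow$(i);
\item the construction of the retraction in (v)$\Rightarrow$(ii), by conjugating with horizontal composites of the chosen isomorphisms;
\item the induction for (iv)$\Rightarrow$(v).
\end{itemize}

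The paper reaches (iv) from (i) in one line. It notes that $\eta_{[2]}$ is a trivial cofibration, so a fibrant $A$ lifts against it.

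You take a different route into (iv) and (v):
\begin{itemize}
\item You prove (iii)$\Rightarrow$(v) directly. Every $1$-cell of $QB$ is isomorphic to a path of length at most one, which is the observation the paper later uses in Proposition~\ref{bicattofibprop}. Condition (v) then passes to retracts in $\mathbf{Flex}$, because the retraction is a $2$-functor sending atomic or identity morphisms to atomic or identity morphisms.
\item You prove (v)$\Rightarrow$(iv) by building the extension $Q[2]\to A$ by hand, using that the hom-category $Q[2](0,2)$ is indiscrete on $(02)$ and $(12,01)$.
\end{itemize}

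The paper's route is shorter, since it needs neither the explicit description of $Q[2]$ nor the retract argument. Yours shows that (ii)--(v) are equivalent by purely combinatorial means, with the model structure entering only to link them to (i).
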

\begin{proof}
(i)$\implies$(ii). The unit morphism $\eta_A \colon A \longrightarrow QA$ is bijective on objects, faithful, essentially full, and locally fully faithful, and hence a trivial cofibration in the left-induced model structure on $\mathbf{Flex}$. Since $A$ is a fibrant object in this model structure, it has the right lifting property in $\mathbf{Flex}$ with respect to $\eta_A$, whence $\eta_A$ admits a retraction in $\mathbf{Flex}$.

(ii)$\implies$(iii). By assumption, $A$ is a retract of $QA$ in $\mathbf{Flex}$.

(iii)$\implies$(i). Since the $2$-category $B$ is fibrant in Lack's model structure on $\mathbf{2Cat}$, it follows from the Quillen adjunction of Theorem \ref{quillenthm} that the flexible $2$-category $QB$ is a fibrant object in the left-induced model structure on $\mathbf{Flex}$. The result now follows from that the fact that the class of fibrant objects in a model category is closed under retracts.

(i)$\implies$(iv). The unit morphism $\eta_{[2]} \colon [2] \longrightarrow Q[2]$, displayed in \S\ref{qsec}, is a trivial cofibration in the left-induced model structure on $\mathbf{Flex}$, with respect to which therefore the fibrant object $A$ has the right lifting property in $\mathbf{Flex}$.

(iv)$\implies$(v). We prove by induction on $n \geq 0$ that every morphism in $A$ with atomic decomposition of length $n$ is isomorphic to an atomic or identity morphism. For $n=0,1$ this is immediate. Now suppose that $n > 1$ and let $f$ be a morphism with atomic decomposition $f = f_n \circ \cdots \circ f_1$. By the induction hypothesis, $g = f_{n-1} \circ \cdots \circ f_1$ is isomorphic to an atomic or identity morphism $h$, whence $f = f_n \circ g$ is isomorphic to $f_n \circ h$. The morphism $[2] \longrightarrow A$ that picks out the composable pair $(f_n,h)$ extends by assumption to a morphism $Q[2] \longrightarrow A$, which picks out an atomic or identity morphism $k$ and an isomorphism $f = f_n \circ h \cong k$.

(v)$\implies$(ii). 
By assumption, we may choose, for each morphism $f$ in $A$, a parallel atomic or identity morphism $r(f)$ and an invertible $2$-cell $\iota_f \colon f \cong r(f)$, being careful to choose $r(f) = f$ and $\iota_f = 1_f$ if $f$ is atomic or an identity.
We define the $2$-functor $R \colon QA \longrightarrow A$ to be the identity on objects, to send a morphism $(f_n,\ldots,f_1)$ in $QA$ to the composite $r(f_n) \circ \cdots \circ r(f_1)$, and to send a $2$-cell $\alpha \colon (f_m,\ldots,f_1) \Longrightarrow (g_n,\ldots,g_1)$ to the following vertical composite. 
\begin{equation*}
{
\xymatrix@C=1.7em{
r(f_m) \circ \cdots \circ r(f_1) \ar@{=>}[rr]^-{\iota_{f_m}^{-1} \circ \cdots \circ \iota_{f_1}^{-1}} && f_m \circ \cdots \circ f_1 \ar@{=>}[r]^-{\alpha} & g_n \circ \cdots \circ g_1 \ar@{=>}[rr]^-{\iota_{g_n} \circ \cdots \circ \iota_{g_1}} && r(g_n) \circ \cdots \circ r(g_1)
}}
\end{equation*}
It is straightforward to check that $R \colon QA \longrightarrow A$ is a morphism of flexible $2$-categories  and that it is a retraction of $\eta_A$.
\end{proof}

We shall refer to the full subcategory of $\mathbf{Flex}$ spanned by the fibrant objects in the left-induced model structure as the category of fibrant flexible $2$-categories. This subcategory admits the following two refinements of Lemma \ref{sglemma}, which we shall use later in this section.

\begin{lemma} \label{genlemma}
The $2$-categories $\mathbb{D}^0$, $\mathbb{D}^1$, $\mathbb{D}^2$, and $Q[2]$ form a generator for the category of fibrant flexible $2$-categories.
\end{lemma}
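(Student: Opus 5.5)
We need to show that if two morphisms $F,G \colon A \longrightarrow B$ of fibrant flexible $2$-categories agree after precomposition with every morphism from $\mathbb{D}^0$, $\mathbb{D}^1$, $\mathbb{D}^2$ and $Q[2]$, then $F = G$. Morphisms out of $\mathbb{D}^0$ and $\mathbb{D}^1$ already show that $F$ and $G$ agree on objects and on all $1$-cells, so the underlying morphisms of free categories coincide. It remains to see that they agree on $2$-cells. The strategy is to reduce an arbitrary $2$-cell of $A$, between composites of arbitrary lengths, to a $2$-cell between atomic or identity morphisms, where $\mathbb{D}^2$ can detect it. The tools for this reduction are the coherence isomorphisms provided by characterisation (v) of Theorem \ref{fibprop}, and these isomorphisms are themselves detected by $Q[2]$.

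Let $\alpha \colon f \Longrightarrow g$ be a $2$-cell in $A$. By Theorem \ref{fibprop}(v), choose atomic or identity morphisms $f', g'$ together with invertible $2$-cells $\iota \colon f \cong f'$ and $\kappa \colon g \cong g'$. Then $\alpha = \kappa^{-1} \cdot \alpha' \cdot \iota$, where $\alpha' = \kappa \cdot \alpha \cdot \iota^{-1} \colon f' \Longrightarrow g'$. The $2$-cell $\alpha'$ runs between atomic or identity morphisms, so it is the image of the generating $2$-cell under a morphism of flexible $2$-categories $\mathbb{D}^2 \longrightarrow A$. Such a map is legitimate because $\mathbb{D}^2$ has atomic boundary, and a morphism of free categories may send an atomic morphism to an identity. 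Hence $F\alpha' = G\alpha'$. So it suffices to show $F\iota = G\iota$ for every invertible $2$-cell $\iota \colon f \cong f'$ with $f'$ atomic or an identity, since $F\iota^{-1} = (F\iota)^{-1}$.

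I expect this last step to be the main obstacle: detecting these invertible $2$-cells with source an arbitrary composite. I would argue by induction on the length $n$ of the atomic decomposition of $f$.

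For $n \le 1$, $\iota$ is a $2$-cell between atomic or identity morphisms, so $\mathbb{D}^2$ detects it.

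For $n \geq 2$, write $f = f_n \circ h$, where $h$ has length $n-1$, and choose by (v) an atomic or identity $h'$ with $\lambda \colon h \cong h'$. Also choose an atomic or identity $k$ with $\mu \colon f_n \circ h' \cong k$. Then $\iota = \nu \cdot \mu \cdot (f_n \lambda)$, where $\nu = \iota \cdot (f_n\lambda)^{-1} \cdot \mu^{-1} \colon k \Longrightarrow f'$ runs between atomic or identity morphisms, so $\mathbb{D}^2$ detects $\nu$.
\begin{itemize}
\item The whiskering $f_n\lambda$ is detected because $F(f_n\lambda) = F(f_n) F(\lambda)$, and $F\lambda = G\lambda$ by the induction hypothesis.
\item The $2$-cell $\mu \colon f_n \circ h' \cong k$ is exactly the invertible $2$-cell picked out by a morphism $Q[2] \longrightarrow A$. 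That morphism sends $01 \mapsto h'$, $12 \mapsto f_n$ and $02 \mapsto k$, and sends the generating isomorphism to $\mu$. It is a well-defined morphism of flexible $2$-categories because $h'$, $f_n$ and $k$ are atomic or identities, and $2$-cells in $Q[2]$ are named by identities in $[2]$, so they are determined by $\mu$ and its inverse.
\end{itemize}
Hence $F\mu = G\mu$, which completes the induction, gives $F = G$, and shows that the four $2$-categories form a generator.

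Two points need care in writing this up. First, one must check that $Q[2]$, with $02$ possibly sent to an identity, still gives a morphism of free categories; this holds because its atomic morphisms are $01$, $12$ and $02$. Second, since $2$-cells of $Q[2]$ between longer paths are composites of $\mu^{\pm1}$ and identities, the assignment is a genuine $2$-functor.
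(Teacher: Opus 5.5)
Your proof is correct and follows the paper's argument. You get agreement on underlying functors from $\mathbb{D}^0$ and $\mathbb{D}^1$, then factor an arbitrary $2$-cell through isomorphisms $f \cong r(f)$ onto a $2$-cell between atomic or identity morphisms, which $\mathbb{D}^2$ detects; those isomorphisms are themselves detected via $Q[2]$ using the inductive construction from (iv)$\implies$(v) of Theorem~\ref{fibprop}. The only difference is cosmetic: the paper fixes each $\iota_f$ to be the specific pasting of $Q[2]$-images produced by that induction, whereas you allow an arbitrary choice and absorb the discrepancy into the extra $\mathbb{D}^2$-detected factor $\nu$.
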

\begin{proof}
Let $F,G \colon A \longrightarrow B$ be morphisms of flexible $2$-categories and suppose that $A$ is fibrant. Suppose that $FH = GH$ for every morphism of flexible $2$-categories $H \colon X \longrightarrow A$ whose domain $X$ is one of the $2$-categories listed in the statement of the lemma. Taking $X = \mathbb{D}^0$ and $X = \mathbb{D}^1$, we see that the underlying functors of $F$ and $G$ are equal.

We now prove that $F$ and $G$ agree on $2$-cells. Since $A$ is fibrant, we may argue as in the proof of the implication (iv)$\implies$(v) of Theorem \ref{fibprop} to show that, for each morphism $f$ in $A$, there exists an atomic or identity morphism $r(f)$ and an invertible $2$-cell $\iota_f \colon f \cong r(f)$ in $A$ which is a pasting composite of invertible $2$-cells whose sources have atomic decompositions of length at most two and whose targets are atomic or identity morphisms. Taking $X = Q[2]$, we see that $F\iota_f = G\iota_f$ for every morphism $f$ in $A$. Furthermore, every $2$-cell $\alpha \colon f \Longrightarrow g$ in $A$ can be written as a vertical composite 
\begin{equation*}
\xymatrix@C=3em{
f \ar@{=>}[r]^-{\iota_f} & r(f) \ar@{=>}[r]^-{\iota_g \cdot \alpha \cdot \iota_f^{-1}} & r(g) \ar@{=>}[r]^-{\iota_g^{-1}} & g
}
\end{equation*}
whose middle factor is a $2$-cell whose source and target are atomic or identity morphisms. Hence, taking $X = \mathbb{D}^2$, we see that $F$ and $G$ agree on all $2$-cells, whence they are equal.
\end{proof}

\begin{lemma} \label{stronggenlemma}
The $2$-categories $\mathbb{D}^0$, $\mathbb{D}^1$, and $\mathbb{D}^2$ form a strong generator for the category of fibrant flexible $2$-categories. 
\end{lemma}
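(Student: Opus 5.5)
The plan is to show that a morphism of flexible $2$-categories $F \colon A \longrightarrow B$ between fibrant flexible $2$-categories is an isomorphism whenever the induced function $\mathbf{Flex}(X,A) \longrightarrow \mathbf{Flex}(X,B)$ is a bijection for $X = \mathbb{D}^0, \mathbb{D}^1, \mathbb{D}^2$. Since Lemma \ref{genlemma} already gives a generator, this amounts to eliminating the need for $Q[2]$. The idea is to use fibrancy of $A$, via Theorem \ref{fibprop}(v), to reduce every $2$-cell to one between atomic or identity morphisms. Those $2$-cells are exactly what $\mathbb{D}^2$ detects.

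\emph{Step 1: the underlying functor.} Morphisms of flexible $2$-categories $\mathbb{D}^0 \longrightarrow A$ are the objects of $A$. Morphisms $\mathbb{D}^1 \longrightarrow A$ are the atomic or identity morphisms of $A$, since a morphism of free categories must send the atomic arrow of $\mathbb{D}^1$ to an atomic or identity morphism. Hence $F$ is bijective on objects and on atomic-or-identity morphisms. In particular $F$ sends atomic morphisms to atomic morphisms. Indeed, if an atomic $f \colon x \longrightarrow y$ had $Ff$ an identity, then $Fx = Fy$ and so $x = y$; the maps $\mathbb{D}^1 \longrightarrow A$ picking out $f$ and $1_x$ would then have the same image, contradicting injectivity. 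So $F$ restricts to an isomorphism of the underlying reflexive graphs of atomic and identity morphisms. By the equivalence $\mathbf{Gph} \simeq \mathbf{Free}$, its underlying functor is therefore an isomorphism of free categories.

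\emph{Step 2: $2$-cells.} A morphism $\mathbb{D}^2 \longrightarrow A$ is a $2$-cell of $A$ whose source and target are atomic or identity morphisms. So $F$ is bijective on such $2$-cells. Now take arbitrary parallel $f,g$ in $A$. Since $A$ is fibrant, Theorem \ref{fibprop}(v) supplies atomic or identity morphisms $r(f), r(g)$ and invertible $2$-cells $\iota_f \colon f \cong r(f)$ and $\iota_g \colon g \cong r(g)$.

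For surjectivity, let $\beta \colon Ff \Longrightarrow Fg$ be a $2$-cell in $B$. Then $F\iota_g \cdot \beta \cdot F\iota_f^{-1}$ is a $2$-cell between $Fr(f)$ and $Fr(g)$, which are atomic or identity by Step 1. It therefore lifts uniquely to some $\gamma \colon r(f) \Longrightarrow r(g)$. Setting $\alpha = \iota_g^{-1}\cdot\gamma\cdot\iota_f$ gives $F\alpha = \beta$.

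For injectivity, suppose $F\alpha = F\alpha'$. Then $F(\iota_g \alpha \iota_f^{-1}) = F(\iota_g\alpha'\iota_f^{-1})$, and both of these are $2$-cells between atomic-or-identity morphisms. By the $\mathbb{D}^2$ bijection they are equal, hence $\alpha = \alpha'$.

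Thus $F$ is bijective on objects, $1$-cells and $2$-cells, and so it is an isomorphism of $2$-categories. Because $\mathbf{Flex}$ is a replete subcategory, $F$ is an isomorphism in $\mathbf{Flex}$; its inverse is automatically a $2$-functor and a morphism of free categories. Together with the generator property, which follows by the same reduction as in Lemma \ref{genlemma} with $Q[2]$ replaced by this argument, this gives the strong generator. Only fibrancy of $A$ is used in these steps.

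The main obstacle I expect is Step 1. One must check carefully that the $\mathbb{D}^1$ bijection forces atomic morphisms to go to atomic ones rather than to identities, since morphisms of free categories are permitted to collapse atoms. Once that is settled, the $2$-cell reduction in Step 2 is routine.
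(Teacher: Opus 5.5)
Your Steps~1 and~2 are exactly the paper's proof. The $\mathbb{D}^0$ and $\mathbb{D}^1$ bijections make the underlying functor of $F$ an isomorphism, and your check that atoms are not collapsed to identities fills in what the paper leaves implicit. Conjugating by the invertible $2$-cells $\iota_f,\iota_g$ supplied by fibrancy of $A$ then reduces local full faithfulness to the $\mathbb{D}^2$ bijection, which is the paper's commutative square of sets of $2$-cells. That already proves the lemma. Here, as in Lemma~\ref{sglemma}, ``strong generator'' means that the functors $\mathbf{Flex}(X,-)$ jointly reflect isomorphisms.

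Your closing assertion, however, is false. You claim the family is also a generator, obtained by rerunning Lemma~\ref{genlemma} without $Q[2]$; the paper explicitly remarks that this strong generator is \emph{not} a generator.

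In your conservativity argument only one morphism $F$ is involved, so the value of $F\iota_f$ never matters. To separate two morphisms $F,G$ you would need $F\iota_f=G\iota_f$. The source of $\iota_f$ is in general a non-atomic composite, so no morphism out of $\mathbb{D}^2$ can detect $\iota_f$.

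Here is a concrete counterexample. Let $C$ be the $2$-category with $1$-cells $a\colon 0\to 1$, $b\colon 1\to 2$, $c\colon 0\to 2$ (and $b\circ a$). Take its hom-category $C(0,2)$ to be a connected groupoid on $b\circ a$ and $c$ with automorphism group $\mathbb{Z}/2$. Then $QC$ is fibrant. There are two distinct morphisms $Q[2]\to QC$ sending $01,12,02$ to $(a),(b),(c)$ and the invertible $2$-cell $(12,01)\cong(02)$ to the two different $2$-cells $b\circ a\cong c$. These two morphisms have the same composite with every morphism into $Q[2]$ from $\mathbb{D}^0$, $\mathbb{D}^1$ or $\mathbb{D}^2$, because the only $2$-cells of $Q[2]$ between atomic or identity morphisms are identities. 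This does not conflict with conservativity, because the category of fibrant flexible $2$-categories lacks equalisers.

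So delete that sentence. With the paper's definition, Steps~1--2 alone suffice. If your definition of strong generator required being a generator, the statement would be false.
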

\begin{proof}
Let $F \colon A \longrightarrow B$ be a morphism of flexible $2$-categories and suppose that $A$ is fibrant. Suppose that the induced function $\mathbf{Flex}(X,A) \longrightarrow \mathbf{Flex}(X,B)$ is a bijection whenever $X$ is one of the $2$-categories listed in the statement of the lemma. Taking $X = \mathbb{D}^0$ and $X = \mathbb{D}^1$, we see that the underlying functor of $F$ is an isomorphism of categories.

Now, let $f,g$ be a parallel pair of morphisms in $A$. Since $A$ is fibrant, Theorem \ref{fibprop} implies that there exist atomic or identity morphisms $r(f)$ and $r(g)$ and invertible $2$-cells $\iota_f \colon f \cong r(f)$ and $\iota_g \colon g \cong r(g)$. We then have a commutative square of sets of $2$-cells
\begin{equation*}
\cd{
A(f,g) \ar[r]^-F \ar[d]_-{A(\iota_f^{-1},\iota_g)} \ar@{}[dr]|=& B(Ff,Fg) \ar[d]^-{B(F\iota_f^{-1},F\iota_g)} \\
A(r(f),r(g)) \ar[r]_-F & B(Fr(f),Fr(g))
}
\end{equation*}
in which the vertical morphisms are bijective. Taking $X = \mathbb{D}^2$, we see that the bottom morphism is also bijective, whence the top morphism is bijective. Hence $F$ is locally fully faithful, and therefore an isomorphism of $2$-categories.
\end{proof}

\begin{remark}
Beware that the strong generator of Lemma \ref{stronggenlemma} is not a generator. This apparent paradox is related to the fact that the category of fibrant flexible $2$-categories lacks equalisers (cf.\ \cite[Example 4.5]{Lac02}).
\end{remark}

\subsection{Bicategories as fibrant objects}
Our next goal is to prove that the category of fibrant flexible $2$-categories is equivalent to the category $\mathbf{Bicat}$ of bicategories and normal pseudofunctors. We shall do this by extending the path $2$-category functor $Q \colon \mathbf{2Cat} \longrightarrow \mathbf{Flex}$ to a fully faithful functor $Q \colon \mathbf{Bicat} \longrightarrow \mathbf{Flex}$ whose essential image consists of the fibrant flexible $2$-categories. This construction is a variant of the strictification of a bicategory (see \cite[\S4.10]{GPS}), and depends on the coherence theorems for bicategories and pseudofunctors (see \cite[\S2]{Gur13}).

First, we fix $n$-ary composition operations $(f_n,\ldots,f_1) \longmapsto \varepsilon(f_n,\ldots,f_1)$ in a bicategory, which we define inductively on $n \geq 0$ as follows. We define the composite of an empty path on an object to be the identity morphism on that object, the composite of a path $(f)$ of length one to be $f$, and for $n > 1$ we define $\varepsilon(f_n,\ldots,f_1) = \varepsilon(f_n,\ldots,f_2)\circ f_1$.  

Note that we shall occasionally denote paths in a bicategory by bold letters $\bm{f}$ and concatenation of paths by $\bm{g}\circ\bm{f}$. For each composable pair of paths $\bm{g}$ and $\bm{f}$ in a bicategory, there is a canonical generalised associativity isomorphism $\gamma \colon \varepsilon(\bm{g})\circ\varepsilon(\bm{f}) \cong \varepsilon(\bm{g}\circ\bm{f})$ by the coherence theorem for bicategories.

We now define the \emph{path $2$-category} of a bicategory $B$ to be the flexible $2$-category $QB$ whose underlying category is the free category on the underlying reflexive graph of $B$, whose $2$-cells $\bm{f} \Longrightarrow \bm{g}$ are named by $2$-cells $\varepsilon(\bm{f}) \Longrightarrow \varepsilon(\bm{g})$ in $B$, whose identity $2$-cells and vertical composition of $2$-cells are induced from those in $B$, and in which the horizontal composite of a horizontally composable pair of $2$-cells $\beta \colon \bm{h} \Longrightarrow \bm{k}$ and $\alpha \colon \bm{f} \Longrightarrow \bm{g}$ is named by the following composite in $B$.
\begin{equation*}
\xymatrix{
\varepsilon(\bm{h}\circ\bm{f})  \ar@{=>}[r]^-{\gamma^{-1}} & \varepsilon(\bm{h})\circ \varepsilon(\bm{f}) \ar@{=>}[r]^-{\beta\circ\alpha} & \varepsilon(\bm{k})\circ\varepsilon(\bm{g}) \ar@{=>}[r]^-{\gamma} & \varepsilon(\bm{k}\circ\bm{g})
}
\end{equation*}
Associativity of horizontal composition in the $2$-category $QB$ follows from the coherence theorem for bicategories.

For each bicategory $B$, we define a pair of normal pseudofunctors $\upsilon_B \colon B \longrightarrow QB$ and $\varepsilon_B \colon QB \longrightarrow B$ as follows. We define $\upsilon_B \colon B \longrightarrow QB$ to be the identity on objects, to send  a non-identity morphism $f$ in $B$ to the path $(f)$, to send a $2$-cell $\alpha \colon f \Longrightarrow g$ in $B$ to the $2$-cell $\upsilon_B(f) \Longrightarrow \upsilon_B(g)$ in $QB$ named by $\alpha$, and to have coherence constraints in $QB$ named by identity $2$-cells in $B$. We define $\varepsilon_B \colon QB \longrightarrow B$ to be the identity on objects, to send a path $(f_n,\ldots,f_1)$ to its composite $\varepsilon(f_n,\ldots,f_1)$, to send a $2$-cell in $QB$ to the $2$-cell in $B$ that names it, and to have as coherence constraints the generalised associativity isomorphisms $\gamma \colon \varepsilon(\bm{g})\circ\varepsilon(\bm{f}) \cong \varepsilon(\bm{g}\circ\bm{f})$ of $B$. The proof that $\varepsilon_B$ defines a normal pseudofunctor depends on the coherence theorem for bicategories.

It is immediate from the definitions of these normal pseudofunctors that $\varepsilon_B \circ \upsilon_B = 1_B$ for every bicategory $B$. This proves the following useful observation.

\begin{proposition} \label{retractlemma}
Every bicategory is a retract in $\mathbf{Bicat}$ of a $2$-category.
\end{proposition}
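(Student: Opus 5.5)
The plan is to exhibit the required retraction directly, using the two normal pseudofunctors just constructed. Given a bicategory $B$, the path $2$-category $QB$ is a (strict) $2$-category. We regard it as an object of $\mathbf{Bicat}$ via the evident inclusion of $2$-categories as bicategories with identity coherence constraints. The claim is that $B$ is a retract of $QB$ in $\mathbf{Bicat}$, with section $\upsilon_B \colon B \longrightarrow QB$ and retraction $\varepsilon_B \colon QB \longrightarrow B$. The argument splits into three checks.

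First, I will confirm that $\upsilon_B$ and $\varepsilon_B$ really are morphisms of $\mathbf{Bicat}$, i.e.\ normal pseudofunctors.

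For $\upsilon_B$, I will verify the following in turn.
\begin{enumerate}[(a)]
\item It sends identities to identities, i.e.\ to empty paths, so it is normal.
\item Its composition constraint $\upsilon_B(g)\circ\upsilon_B(f) = (g,f) \Longrightarrow (g\circ f)$ is named by an identity $2$-cell of $B$ and is therefore invertible. This needs $\varepsilon(g,f) = g\circ f$, which holds by the inductive definition of $\varepsilon$ (with the caveat that when $g\circ f$ is an identity in $B$ the target is the empty path, named by the same $2$-cell).
\item The pseudofunctor axioms reduce, after unwinding horizontal composition in $QB$ through the isomorphisms $\gamma$, to instances of the coherence theorem for bicategories.
\end{enumerate}
For $\varepsilon_B$, normality and the pseudofunctor axioms are asserted in the paper from coherence, and I take them as given.

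Second, I will verify that $\varepsilon_B\circ\upsilon_B = 1_B$ as normal pseudofunctors; this equation is the retraction. The check has four parts.
\begin{enumerate}[(a)]
\item On objects, both maps are identities.
\item On morphisms, $\varepsilon_B(\upsilon_B f) = \varepsilon(f) = f$ for non-identity $f$, and an identity is sent to itself.
\item On $2$-cells, $\upsilon_B$ sends $\alpha$ to the $2$-cell named by $\alpha$, and $\varepsilon_B$ returns the name $\alpha$.
\item The composite's coherence constraint is $\varepsilon_B$ applied to $\upsilon_B$'s constraint, i.e.\ the identity name, pasted with $\varepsilon_B$'s constraint $\gamma \colon \varepsilon(g)\circ\varepsilon(f) \cong \varepsilon(g,f)$. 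This $\gamma$ is the identity, since the canonical generalised associativity isomorphism between two bracketings that coincide is the identity by coherence.
\end{enumerate}
So the composite has identity constraints, and $\varepsilon_B\circ\upsilon_B$ equals the identity pseudofunctor on the nose.

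The main obstacle is bookkeeping rather than substance. One point is to state precisely that $\gamma$ is an identity in the binary case, given the chosen left-nested $\varepsilon$. The other is to handle the edge cases where a composite in $B$ is an identity, or where $f$ or $g$ is an identity, so that the paths are empty. In each such case I will check that the relevant $\gamma$ reduces to a unitor-free identity or is forced by normality. Once this is settled, the proposition follows immediately.
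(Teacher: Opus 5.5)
Your proposal is correct and follows the paper's argument: $B$ is a retract of the $2$-category $QB$ via $\upsilon_B$ and $\varepsilon_B$, and the paper simply notes that $\varepsilon_B\circ\upsilon_B = 1_B$ is immediate from the definitions. Your extra bookkeeping is sound: the binary $\gamma$ is an identity because $\varepsilon(g,f)=g\circ f$, and at pairs involving an identity $1$-cell the constraints of $\upsilon_B$ and $\varepsilon_B$ are a unitor and its inverse, which are forced by normality and cancel, so the composite has identity constraints.
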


The path $2$-category construction defines a functor $Q \colon \mathbf{Bicat} \longrightarrow \mathbf{Flex}$ which extends the path $2$-category functor $Q \colon \mathbf{2Cat} \longrightarrow \mathbf{Flex}$. For each normal pseudofunctor between bicategories $F \colon B\longrightarrow C$, the induced morphism of flexible $2$-categories $QF \colon QB \longrightarrow QC$  is given on objects and atomic morphisms by the action of $F$ on objects and morphisms, and  sends a $2$-cell $\alpha \colon \bm{f} \Longrightarrow \bm{g}$ in $QB$ to the $2$-cell $QF(\bm{f}) \Longrightarrow QF(\bm{g})$ in $QC$ named by the composite
\begin{equation*}
\xymatrix{
\varepsilon(F\bm{f}) \ar@{=>}[r]^-{\varphi} & F(\varepsilon(\bm{f})) \ar@{=>}[r]^-{F\alpha}& F(\varepsilon(\bm{g})) \ar@{=>}[r]^-{\varphi^{-1}} & \varepsilon(F\bm{g}),
}
\end{equation*}
where  $\varphi$ denotes the generalised coherence constraints of the normal pseudofunctor $F$. These constraints are  determined by the coherence theorem for pseudofunctors, which may also be used to prove that $QF$ preserves horizontal composition of $2$-cells. 

The proof that the path $2$-category functor $Q \colon \mathbf{Bicat} \longrightarrow \mathbf{Flex}$ preserves composition is facilitated by Lemma \ref{genlemma} and the following proposition.

\begin{proposition} \label{bicattofibprop}
For every bicategory $B$, the path $2$-category $QB$ is a fibrant flexible $2$-category.
\end{proposition}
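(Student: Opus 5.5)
We verify criterion (v) of Theorem \ref{fibprop}: every morphism of $QB$ should be isomorphic, via an invertible $2$-cell, to an atomic or identity morphism. First we record that $QB$ is flexible. By construction its underlying category is the free category on the underlying reflexive graph of $B$. Its atomic morphisms are therefore the paths $(f)$ of length one with $f$ a non-identity morphism of $B$, and its identities are the empty paths.

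Now let $\bm{f} = (f_n,\ldots,f_1)$ be a morphism of $QB$ from $x$ to $y$, and consider the morphism $\varepsilon(\bm{f}) \colon x \longrightarrow y$ in $B$.

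If $\varepsilon(\bm{f})$ is not an identity morphism, take the atomic morphism $(\varepsilon(\bm{f}))$ of $QB$. Its composite is $\varepsilon((\varepsilon(\bm{f}))) = \varepsilon(\bm{f})$, by the definition of the composite of a path of length one. So the identity $2$-cell on $\varepsilon(\bm{f})$ in $B$ names a $2$-cell $\bm{f} \Longrightarrow (\varepsilon(\bm{f}))$ in $QB$.

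If $\varepsilon(\bm{f}) = 1_x$ (so $x = y$), take the empty path on $x$. Its composite is also $1_x$, so again the identity $2$-cell names a $2$-cell $\bm{f} \Longrightarrow ()$.

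In either case the named $2$-cell is invertible in $QB$. The reason is that vertical composition and identities in $QB$ are induced from those of $B$, so the identity $2$-cell of $B$, read in the reverse direction, names an inverse.

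The one point needing care is this case split, which handles paths whose composite in $B$ happens to be an identity. It arises because $QB$ is built only from non-identity morphisms of $B$, so a length-one path $(1_x)$ is not available. Criterion (v) therefore holds, and Theorem \ref{fibprop} shows that $QB$ is fibrant.
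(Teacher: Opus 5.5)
Your proof is correct and is essentially the paper's argument. The paper also exhibits the invertible $2$-cell $\bm{f} \cong \upsilon_B(\varepsilon(\bm{f}))$ named by the identity $2$-cell on $\varepsilon(\bm{f})$, and then applies the implication (v)$\implies$(i) of Theorem~\ref{fibprop}; your case split on whether $\varepsilon(\bm{f})$ is an identity simply unpacks how the normal pseudofunctor $\upsilon_B$ acts on identity versus non-identity morphisms.
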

\begin{proof}
Let $(f_n,\ldots,f_1)$ be a path of non-identity morphisms in $B$. There exists an invertible $2$-cell $(f_n,\ldots,f_1) \cong \upsilon_B(\varepsilon(f_n,\ldots,f_1))$ in $QB$ named by the identity $2$-cell on $\varepsilon(f_n,\ldots,f_1)$. Thus every morphism in $QB$ is isomorphic to an atomic or identity morphism, and the result follows from the implication (v)$\implies$(i) of Theorem \ref{fibprop}.
\end{proof}

We can now state that the normal pseudofunctors $\upsilon_B \colon B \longrightarrow QB$ form the components of a natural transformation from the identity functor on $\mathbf{Bicat}$ to the composite functor $Q \colon \mathbf{Bicat} \longrightarrow \mathbf{Bicat}$. We have already seen in \S\ref{qsec} that the $2$-functors $\varepsilon_A \colon QA \longrightarrow A$, for $A$ a $2$-category, form the components of a natural transformation from the composite functor $Q \colon \mathbf{2Cat} \longrightarrow \mathbf{2Cat}$ to the identity functor on $\mathbf{2Cat}$. It is straightforward to verify the remaining triangle identity $\varepsilon_{QB} \circ Q\upsilon_B = 1_{QB}$ for each bicategory $B$, which completes the proof of the following proposition.

\begin{proposition} \label{adjprop}
The path $2$-category functor $Q \colon \mathbf{Bicat} \longrightarrow \mathbf{2Cat}$ is left adjoint to the inclusion functor $\mathbf{2Cat} \longrightarrow \mathbf{Bicat}$, with unit morphisms $\upsilon_B \colon B \longrightarrow QB$ and counit morphisms $\varepsilon_A \colon QA \longrightarrow A$.
\end{proposition}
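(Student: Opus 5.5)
The plan is to verify the two triangle identities for the proposed unit $\upsilon$ and counit $\varepsilon$, after first making sure that both are natural in the appropriate sense. Here $\upsilon_B \colon B \longrightarrow QB$ is a normal pseudofunctor and $\varepsilon_A \colon QA \longrightarrow A$ is a $2$-functor. Naturality of $\varepsilon$ in $2$-functors is already known from \S\ref{qsec}, since the $2$-categorical $Q$ is the restriction of the bicategorical one.

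\textbf{Naturality of $\upsilon$.} Take a normal pseudofunctor $F \colon B \longrightarrow C$; we must show $QF \circ \upsilon_B = \upsilon_C \circ F$ as normal pseudofunctors, including coherence constraints.
\begin{itemize}
\item On objects and on $1$-cells both sides agree: a non-identity $f$ goes to the path $(Ff)$. If $Ff$ happens to be an identity, both sides give the empty path, by normality and because $QF$ acts on atomic morphisms by $F$.
\item On a $2$-cell $\alpha \colon f \Longrightarrow g$, the cell $QF(\upsilon_B\alpha)$ is named by $\varphi^{-1}\cdot F\alpha\cdot\varphi$ for paths of length at most one. Those generalised constraints are identities, so we recover $F\alpha$, which is what $\upsilon_C F\alpha$ names.
\item For the coherence constraints, $\upsilon_B$ has identity-named constraints, so $QF\circ\upsilon_B$ has constraints named by $\varphi^{-1}\cdot 1\cdot \varphi$ composed with the binary constraint of $F$. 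This should be exactly the constraint that $\upsilon_C\circ F$ inherits from $F$.
\end{itemize}
I expect this last point to need a small coherence computation using the definition of $\varepsilon(\bm f)$ for length-two paths.

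\textbf{First triangle identity.} The identity $\varepsilon_B\circ\upsilon_B = 1_B$ was observed above for every bicategory $B$, so in particular it holds for a $2$-category $A$ regarded as a bicategory.

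\textbf{Second triangle identity.} The remaining identity is $\varepsilon_{QB}\circ Q\upsilon_B = 1_{QB}$ for a bicategory $B$. Here the left side is a composite of a $2$-functor with the strict $2$-functor $Q\upsilon_B$, and the right side is the identity $2$-functor on the flexible $2$-category $QB$. To prove equality I would use Lemma~\ref{genlemma}: $QB$ is fibrant by Proposition~\ref{bicattofibprop}, so it suffices to check that both sides agree after precomposition with every morphism from $\mathbb{D}^0$, $\mathbb{D}^1$, $\mathbb{D}^2$ and $Q[2]$. Concretely:
\begin{itemize}
\item On objects, and on atomic morphisms $(f)$, the composite sends $(f)$ to $((f))$ and then to $(f)$.
\item On a $2$-cell between atomic or identity morphisms, all constraints are trivial, so the composite fixes it.
\item On the $Q[2]$-shaped cells, namely the invertible cells $(g,f)\cong(\varepsilon(g,f))$ named by identities, the cell is sent under $Q\upsilon_B$ to a cell named by the constraint of $\upsilon_B$, which is an identity. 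Applying $\varepsilon_{QB}$ with its generalised associativity constraints, which are identities in the $2$-category $QB$, should return the original cell.
\end{itemize}
If this becomes fiddly, the alternative is to compute directly on an arbitrary $2$-cell $\alpha \colon \bm f\Longrightarrow\bm g$ of $QB$, tracking the constraints $\varphi$ of $\upsilon_B$ (identities) and $\gamma$ of $QB$ (strict).

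\textbf{Main obstacle.} The hardest part is the bookkeeping of generalised coherence constraints, both in the naturality of $\upsilon$ and in the $Q[2]$ check. It is resolved by noting that every constraint involved is named by an identity $2$-cell of $B$, so the coherence theorem reduces each comparison to an equality of canonical isomorphisms.
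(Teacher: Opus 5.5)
Your proposal is correct and follows the paper's own argument. You establish naturality of $\upsilon$ and $\varepsilon$, obtain one triangle identity from $\varepsilon_B\circ\upsilon_B=1_B$, and check $\varepsilon_{QB}\circ Q\upsilon_B=1_{QB}$, which the paper simply calls straightforward; either Lemma~\ref{genlemma} or your direct computation works for this last step.

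The one thing to tighten is that Lemma~\ref{genlemma} requires agreement on \emph{all} morphisms $Q[2]\longrightarrow QB$, not only those picking out identity-named cells. A general invertible cell $(g,f)\cong\bm{k}$ factors as the identity-named cell $(g,f)\cong\upsilon_B(g\circ f)$ followed by a cell between atomic or identity morphisms, so your $\mathbb{D}^2$ and identity-named checks already cover it.
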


\begin{remark}
The path $2$-category of a bicategory is therefore what has been called elsewhere its normal strictification. It also follows that the path $2$-category of a $2$-category is none other than its normal pseudofunctor classifier.
\end{remark}

All is now prepared for giving a simple proof of the following proposition.

\begin{proposition} \label{ffprop}
The path $2$-category functor $Q \colon \mathbf{Bicat} \longrightarrow \mathbf{Flex}$ is fully faithful.
\end{proposition}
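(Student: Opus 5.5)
We aim to show that for bicategories $B$ and $C$ the function $\mathbf{Bicat}(B,C) \longrightarrow \mathbf{Flex}(QB,QC)$, $F \longmapsto QF$, is a bijection. The plan is to use the adjunction of Proposition \ref{adjprop} together with the fibrancy of $QC$ (Proposition \ref{bicattofibprop}).

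\emph{Faithfulness.} Given $F \colon B \longrightarrow C$, we will recover $F$ from $QF$ via the naturality of $\upsilon$ and the identity $\varepsilon_C \circ \upsilon_C = 1_C$:
$$\varepsilon_C \circ QF \circ \upsilon_B = \varepsilon_C \circ \upsilon_C \circ F = F.$$
Hence $QF = QG$ implies $F = G$. The one point to watch is that here $\varepsilon_C$ is the normal pseudofunctor $QC \longrightarrow C$, and that $QF$ is regarded as a $2$-functor, hence as a normal pseudofunctor.

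\emph{Fullness.} Given a morphism of flexible $2$-categories $H \colon QB \longrightarrow QC$, we set
$$F = \varepsilon_C \circ H \circ \upsilon_B \colon B \longrightarrow C,$$
a composite of normal pseudofunctors, and we must show that $QF = H$. Since $QC$ is fibrant, the natural route is Lemma \ref{genlemma}, in the form where the domain is fibrant. Here the domain is $QB$, which is fibrant by Proposition \ref{bicattofibprop}. So it suffices to check that $QF$ and $H$ agree on all maps from $\mathbb{D}^0$, $\mathbb{D}^1$, $\mathbb{D}^2$ and $Q[2]$ into $QB$. Equivalently, we check agreement on objects, on atomic morphisms, on $2$-cells between atomic-or-identity morphisms, and on the images of $Q[2]$.

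\emph{Objects and atomic morphisms.} These are immediate. An atomic morphism of $QB$ is a path $(f)$ of length one. Since $H$ is a morphism of free categories, $H(f)$ is atomic or an identity. Hence $\varepsilon_C H(f) = \varepsilon(H(f))$ is just that morphism, and $QF(f) = F(f)$ equals $H(f)$.

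\emph{$2$-cells between length-$\le1$ paths.} Here $QF$ involves no constraint cells. A $2$-cell $\alpha \colon (f) \Longrightarrow (g)$ in $QB$ is $\upsilon_B\alpha$, so $F\alpha = \varepsilon_C H \upsilon_B \alpha$ is the name of $H\alpha$, and so the two agree.

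\emph{Maps from $Q[2]$ (main obstacle).} A map $Q[2] \longrightarrow QB$ picks out atomic-or-identity $f,g$, an atomic-or-identity $k$, and an invertible $2$-cell $\theta \colon (g,f) \cong (k)$, named by a $2$-cell $g\circ f \cong k$ in $B$. We must show $QF(\theta) = H(\theta)$. By definition, $QF(\theta)$ is named by the composite
$$F g \circ F f \xrightarrow{\ \varphi\ } F(g\circ f) \xrightarrow{\ F\theta\ } F k,$$
where $\varphi$ is the composition constraint of $F = \varepsilon_C H \upsilon_B$. Computing this constraint: $\upsilon_B$ has identity constraints, $H$ is strict, and $\varepsilon_C$ contributes the constraint $\gamma$, which is the identity on paths of length at most two. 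So $\varphi$ is the name of $H$ applied to the $2$-cell $(g,f) \cong (g\circ f)$ named by $1_{g\circ f}$. It then remains to use that $H$ preserves vertical composition: the composite named $F\theta\cdot\varphi$ is the name of
$$H(\theta \cdot [1_{g\circ f}]) = H\theta,$$
since in $QB$ the $2$-cell $\theta$ factors as $(g,f) \cong (g\circ f) \xrightarrow{\ \theta\ } (k)$. I expect the careful bookkeeping of these constraints, including the cases where $g\circ f$ is an identity so that normality is used, to be the main work. After that, Lemma \ref{genlemma} gives $QF = H$.
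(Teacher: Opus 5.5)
Your proof is correct, but it takes a different route from the paper's. The paper's proof is formal. It reduces to the case where the codomain is a $2$-category $A$, using Proposition~\ref{retractlemma} together with functoriality of $Q$: the map on hom-sets for a general codomain is then a retract of the one for a $2$-category. It then concludes by two-out-of-three for bijections. Indeed, $\varepsilon_A\circ(-)\colon\mathbf{Flex}(QB,QA)\to\mathbf{2Cat}(QB,A)$ is bijective by the adjunction of \S\ref{qsec}, and its composite with $Q$ is the bijection $\mathbf{Bicat}(B,A)\cong\mathbf{2Cat}(QB,A)$ of Proposition~\ref{adjprop}.

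In the paper, Lemma~\ref{genlemma} and Proposition~\ref{bicattofibprop} are used only earlier, to show that $Q$ preserves composition. You instead use them directly for fullness, with an arbitrary bicategory $C$ as codomain. Your route avoids Proposition~\ref{adjprop} and the retract reduction: you need only $\varepsilon_C\circ\upsilon_C=1_C$ and naturality of $\upsilon$. It also gives an explicit inverse $H\mapsto\varepsilon_C\circ H\circ\upsilon_B$. The paper's route instead makes the statement purely formal once the adjunctions are available, with all constraint bookkeeping confined to the verifications of functoriality and of the triangle identity.

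One detail of your $Q[2]$ computation needs adjusting, though the conclusion stands. A morphism of flexible $2$-categories may send an atomic morphism to an identity, so $H(f)$ or $H(g)$ may be the empty path. In that case the constraint $\gamma$ of $\varepsilon_C$ is a unitor of $C$, not an identity. Also, the $2$-cell naming $QF(\theta)$ has source $\varepsilon(QF(g,f))$, which is then $Fg$ (or $Ff$) rather than $Fg\circ Ff$.

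The cell to use is the generalised constraint $\varphi_{(g,f)}\colon\varepsilon(QF(g,f))\Rightarrow F(g\circ f)$. This is the binary constraint precomposed with the inverse unitor, and that unitor cancels the one contributed by $\gamma$. So $\varphi_{(g,f)}$ is still the name of $H$ applied to the cell $(g,f)\cong\upsilon_B(g\circ f)$ named by $1_{g\circ f}$, and your vertical-composition argument goes through.

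The other degenerate cases are unproblematic. The case you flagged, where $g\circ f$ is an identity in $B$, is harmless. The case where $f$ or $g$ is itself an identity is already covered by your $\mathbb{D}^2$ check.
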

\begin{proof}
We are required to prove that, for each pair of bicategories $A$ and $B$, the function 
\begin{equation*}
\xymatrix{
\mathbf{Bicat}(B,A) \ar[r]^-Q & \mathbf{Flex}(QB,QA)
}
\end{equation*}
is a bijection. Since every bicategory is a retract of a $2$-category in $\mathbf{Bicat}$ by Proposition \ref{retractlemma}, it suffices to consider the case when $A$ is a $2$-category. In that case, the $2$-functor $\varepsilon_A \colon QA \longrightarrow A$ is a counit morphism for the adjunction between the inclusion functor $\mathbf{Flex} \longrightarrow \mathbf{2Cat}$ and the path $2$-category functor $Q \colon \mathbf{2Cat} \longrightarrow \mathbf{Flex}$. Hence the function
\begin{equation*}
\xymatrix@C=3em{
\mathbf{Flex}(QB,QA) \ar[r]^-{\varepsilon_A \circ (-)} & \mathbf{2Cat}(QB,A)
}
\end{equation*} 
is a bijection. So the result follows from the fact that the composite function
\begin{equation*}
\cd[@C=3em]{
\mathbf{Bicat}(B,A) \ar[r]^-{Q} & \mathbf{Flex}(QB,QA) \ar[r]^-{\varepsilon_A\circ (-)} & \mathbf{2Cat}(QB,A)
}
\end{equation*}
is also a bijection by Proposition \ref{adjprop}. 
\end{proof}

It remains to characterise the essential image of the functor $Q \colon \mathbf{Bicat} \longrightarrow \mathbf{Flex}$.

\begin{theorem} \label{bicatthm}
The path $2$-category functor $Q \colon \mathbf{Bicat} \longrightarrow \mathbf{Flex}$ determines an equivalence between the category of bicategories and normal pseudofunctors and the full subcategory of $\mathbf{Flex}$ spanned by the fibrant objects in the left-induced model structure.
\end{theorem}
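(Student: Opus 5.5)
We already have most of the ingredients. By Proposition \ref{ffprop}, the functor $Q \colon \mathbf{Bicat} \longrightarrow \mathbf{Flex}$ is fully faithful. By Proposition \ref{bicattofibprop}, it lands in the full subcategory of fibrant flexible $2$-categories. It therefore remains only to show that $Q$ is essentially surjective onto this subcategory: for each fibrant flexible $2$-category $A$ we must produce a bicategory $B$ and an isomorphism $QB \cong A$ in $\mathbf{Flex}$.

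The natural candidate for $B$ is a \emph{reduced} bicategory $B_A$ built from $A$ using only atomic and identity morphisms. Since $A$ is fibrant, Theorem \ref{fibprop}(v) lets us choose, for every morphism $f$ of $A$, an atomic or identity morphism $r(f)$ and an invertible $2$-cell $\iota_f \colon f \cong r(f)$, with $r(f)=f$ and $\iota_f = 1_f$ whenever $f$ is atomic or an identity. The bicategory $B_A$ is then defined as follows.
\begin{itemize}
\item Its objects are those of $A$.
\item Its morphisms are the atomic and identity morphisms of $A$.
\item Its $2$-cells and vertical composition are those of $A$.
\item Composition is $g \ast f = r(g \circ f)$, with horizontal composition of $2$-cells conjugated by the $\iota$'s.
\item The associators and unitors are built from the $\iota$'s, and are strict unitors since $r$ fixes identities.
\end{itemize}
The coherence axioms hold because every constraint is a composite of $\iota$'s and their inverses in the strict $2$-category $A$, so any two parallel constraint composites are equal. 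In fact one can recognise $B_A$ as the transport of the $2$-category structure of $A$ along the equivalence of hom-categories $A(x,y) \simeq B_A(x,y)$ given by $r$, whose inclusion is full.

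We then compare $QB_A$ with $A$. Both have the same underlying free category: the underlying reflexive graph of $B_A$ consists of the atomic morphisms of $A$, together with identities as degenerate edges, so $U(QB_A)$ is the free category on the graph of atomics of $A$, which is $UA$. On $2$-cells, a $2$-cell $\bm f \Longrightarrow \bm g$ in $QB_A$ is named by a $2$-cell $\varepsilon(\bm f) \Longrightarrow \varepsilon(\bm g)$ in $B_A$, where $\varepsilon$ denotes iterated $\ast$-composition. We must check that $\varepsilon(f_n,\ldots,f_1)$ in $B_A$ is $r$ applied iteratively, and that there is a canonical invertible $2$-cell $\kappa_{\bm f} \colon f_n\circ\cdots\circ f_1 \cong \varepsilon(\bm f)$ in $A$ composed of $\iota$'s. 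We then send a $2$-cell $\alpha \colon \bm f \Longrightarrow \bm g$ of $A$ to $\kappa_{\bm g}\cdot\alpha\cdot\kappa_{\bm f}^{-1}$. This is bijective on each hom-set and preserves vertical composition and identities, so we obtain a $2$-functor $A \longrightarrow QB_A$ that is the identity on underlying free categories, hence an isomorphism in $\mathbf{Flex}$.

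The main obstacle is checking that this assignment preserves horizontal composition. The horizontal composite in $QB_A$ involves the generalised associativity isomorphisms $\gamma$ of $B_A$, so we need $\kappa_{\bm g\circ\bm f} = \gamma\cdot(\kappa_{\bm g}\ast\kappa_{\bm f})$ suitably interpreted. I would reduce this to coherence: each side is a composite of $\iota$'s and their inverses between the same two morphisms of the $2$-category $A$. It is cleanest to observe that, by construction of $B_A$, the $2$-cells $\kappa$ exhibit the identity-on-objects assignment $A \to B_A$ (given by $f \mapsto r(f)$) as a normal pseudofunctor whose constraints are precisely these composites. The generalised constraints $\varphi$ of this pseudofunctor, determined by the coherence theorem for pseudofunctors, then coincide with the $\kappa$'s, which gives the required compatibility.

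Alternatively, one could avoid the explicit comparison. Apply Theorem \ref{fibprop}(ii) to obtain a retraction $R$ of $\eta_A$, so that $A$ is a retract of $QA$, and then use the full faithfulness of $Q$ together with idempotent splitting in $\mathbf{Bicat}$. Splitting idempotent normal pseudofunctors is, however, no easier, so I would pursue the direct construction.
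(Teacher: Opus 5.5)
Your proposal is correct, and the bicategory $B_A$ you build is the paper's. The paper chooses $g*f$ and $\iota_{g,f}$ only for composable pairs of atomic or identity morphisms rather than $r$ on all morphisms, which makes no difference. You diverge in the comparison with $A$.

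The paper takes the inclusion $I\colon B_A\to A$, which is a normal pseudofunctor with constraints $\iota_{g,f}$. It transposes $I$ across the adjunction of Proposition~\ref{adjprop} to a $2$-functor $\hat{I}\colon QB_A\to A$. This is a morphism of flexible $2$-categories because $I$ sends every morphism to an atomic or identity morphism. Invertibility then follows from Lemma~\ref{stronggenlemma}, since $I$ is bijective on objects, faithful, full on atomic and identity morphisms, and locally fully faithful. (The lemma's fibrancy hypothesis on the domain $QB_A$ holds by Proposition~\ref{bicattofibprop}.) So compatibility with horizontal composition comes for free from the adjunction, and the isomorphism check reduces to $\mathbb{D}^0$, $\mathbb{D}^1$, $\mathbb{D}^2$.

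You instead write down the inverse $A\to QB_A$ explicitly, and so must prove
\[
\kappa_{\bm{k}\circ\bm{g}}=\gamma\cdot\iota_{\varepsilon(\bm{k})\circ\varepsilon(\bm{g})}\cdot(\kappa_{\bm{k}}\circ\kappa_{\bm{g}}),
\]
which is exactly what the adjunction packages for the paper. This identity does hold. The cleanest way to see it is that your $\kappa_{\bm{f}}\colon f_n\circ\cdots\circ f_1\cong\varepsilon(\bm{f})$ are precisely the generalised constraints of $I$. The displayed identity is then the coherence law for those constraints, with $A$ strict, and your map is $\hat{I}^{-1}$.

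The generalised constraints of $f\mapsto r(f)$ do not literally coincide with the $\kappa$'s, since they have different endpoints. One has $\kappa_{\bm{f}}=\varphi_{\bm{f}}^{-1}\cdot\iota_{f_n\circ\cdots\circ f_1}$, where $\varphi_{\bm{f}}\colon\varepsilon(\bm{f})\cong r(f_n\circ\cdots\circ f_1)$, and the identity also follows this way after a short extra computation. Your route gives an explicit formula for the isomorphism; the paper's needs no coherence bookkeeping beyond Proposition~\ref{adjprop}.

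One caution about $B_A$ itself. Your stated reason for the coherence axioms, that every constraint is a composite of $\iota$'s and so any two parallel constraint composites are equal, does not follow as stated. Composites of $\iota$'s that merely happen to be parallel in $A$ need not agree. What is true is the formal version. Every formal composite of associators between two bracketed $\ast$-composites equals $\kappa'\cdot\kappa^{-1}$, where $\kappa$ and $\kappa'$ are the canonical $\iota$-composites from the underlying composite in $A$ to the two bracketings. This is essentially your transport-of-structure remark, which is the justification you should rely on. The paper instead verifies the pentagon directly in Figure~\ref{figure}, using only the definitions and interchange in $A$.
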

\begin{proof}
We have proven in Propositions \ref{bicattofibprop} and \ref{ffprop} that the path $2$-category functor $Q \colon \mathbf{Bicat} \longrightarrow \mathbf{Flex}$ is fully faithful and that its image is contained in the full subcategory of $\mathbf{Flex}$ spanned by the fibrant flexible $2$-categories. It therefore remains to prove that every fibrant flexible $2$-category is isomorphic to the path $2$-category of a bicategory.

Let $A$ be a fibrant flexible $2$-category. We construct a bicategory $B$ as follows. First, by Theorem \ref{fibprop}, we may choose, for each composable pair of atomic or identity morphisms $g,f$ in $A$, an atomic or identity morphism $g * f$ and an invertible $2$-cell $\iota_{g,f} \colon g \circ f \cong g*f$ in $A$, such that whenever $f$ or $g$ is an identity morphism, $g*f = g\circ f$ and $\iota_{g,f} = 1_{g\circ f}$. We define the objects of $B$ to be the objects of $A$, the identity and non-identity morphisms of $B$ to be the identity and atomic morphisms of $A$ respectively, composition of morphisms to be given by $(g,f) \longmapsto g*f$, the $2$-cells $f \Longrightarrow g$ in $B$ to be the $2$-cells $f \Longrightarrow g$ in $A$, the identity $2$-cells and vertical composition of $2$-cells to be defined as in $A$, the horizontal composition of a horizontally composable pair of $2$-cells $\beta \colon h \Longrightarrow k$ and $\alpha \colon f \Longrightarrow g$ in $B$ to be the composite $2$-cell
\begin{equation*}
\cd{
h * f \ar@{=>}[r]^-{\iota_{h,f}^{-1}} & h \circ f \ar@{=>}[r]^-{\beta \circ \alpha} & k \circ g \ar@{=>}[r]^-{\iota_{k,g}} & k * g
}
\end{equation*}
in $A$, the unit constraints to be identities, and the associativity constraint $\mathfrak{a}_{h,g,f}$ for a composable triple of morphisms $h,g,f$ in $B$ to be the following composite $2$-cell in $A$. 
\begin{equation*}
\cd[@C=3em]{
(h * g)*f \ar@{=>}[r]^-{\iota_{h*g,f}^{-1}} & (h*g) \circ f \ar@{=>}[r]^{\iota_{h,g}^{-1} \circ 1} & h\circ g\circ f \ar@{=>}[r]^-{1\circ \iota_{g,f}} & h \circ (g*f) \ar@{=>}[r]^-{\iota_{h,g*f}} & h*(g*f)
}
\end{equation*}
The naturality of the associativity constraints follows from the definition of horizontal composition, while the pentagon coherence axiom is proven in Figure \ref{figure}, in which we denote composition in $B$ by juxtaposition, and whose outer regions commute by definition and whose interior diamond commutes by functoriality of horizontal composition in $A$.
\begin{figure}
\begin{equation*}
\cd[@C=.9em]{
&&& (kh)(gf) \ar[ddrrr]^-{\mathfrak{a}} \ar@<.5ex>[d]^-{\iota^{-1}}\\
&&& (kh)\circ(gf) \ar[dr]^-{\iota^{-1} \circ1} \ar@<.5ex>[u]^-{\iota} \\
((kh)g)f \ar[uurrr]^-{\mathfrak{a}} \ar[r]_-{\iota^{-1}} \ar[ddddr]_-{\mathfrak{a}  1} & (kh)g\circ f \ar[r]_-{\iota^{-1}\circ 1} & kh\circ g\circ  f \ar[ur]^-{1\circ  \iota} \ar[dr]_-{\iota^{-1}\circ 1} && k\circ h\circ  gf \ar[r]_-{1\circ  \iota} & k\circ  h(gf) \ar[r]_-{\iota} & k(h(gf)) \\
&&& k\circ  h\circ  g\circ  f \ar[ur]_-{1\circ  \iota} \ar@<-.5ex>[d]_-{1\circ  \iota \circ 1}\\
&&& k\circ  hg\circ f \ar@<-.5ex>[u]_-{1\circ  \iota^{-1}\circ  1} \ar@<-.5ex>[dl]_-{\iota\circ 1} \ar@<-.5ex>[dr]_{1\circ\iota}\\
&& k(hg)\circ f \ar@<-.5ex>[ur]_-{\iota^{-1}\circ 1} \ar@<-.5ex>[dl]_-{\iota} && k\circ  (hg)f\ar@<-.5ex>[ul]_-{1\circ\iota^{-1}} \ar@<-.5ex>[dr]_-{\iota} \\
& (k(hg))f \ar[rrrr]_-{\mathfrak{a}} \ar@<-.5ex>[ur]_-{\iota^{-1}} &&&& k((hg)f) \ar[uuuur]_-{1  \mathfrak{a}} \ar@<-.5ex>[ul]_-{\iota^{-1}}
}
\end{equation*}
\caption{Proof of the pentagon axiom for the bicategory extracted from a fibrant flexible $2$-category.}  \label{figure}
\end{figure}

There is a normal pseudofunctor $I \colon B \longrightarrow A$ given by the evident inclusion on objects, morphisms, and $2$-cells, and whose coherence constraints are the isomorphisms $\iota_{g,f}$. By Proposition \ref{adjprop}, this normal pseudofunctor corresponds to a $2$-functor $\hat{I} \colon QB \longrightarrow A$, which is in fact a morphism of flexible $2$-categories since $F$ sends every morphism in $B$ to an atomic or identity morphism in $A$. It now follows from Lemma \ref{stronggenlemma} that the morphism of flexible $2$-categories $\hat{I} \colon QB \longrightarrow A$ is an isomorphism, since the normal pseudofunctor $I$ is by definition bijective on objects, faithful, full on atomic and identity morphisms, and locally fully faithful.
\end{proof}
 
We may now add the following equivalent condition to Theorem \ref{fibprop}.

\begin{corollary}
A flexible $2$-category is fibrant in the left-induced model structure on $\mathbf{Flex}$ if and only if it is isomorphic to the path $2$-category of a bicategory.
\end{corollary}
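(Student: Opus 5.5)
The corollary should follow directly from Theorem \ref{bicatthm} together with Proposition \ref{bicattofibprop} and the fact that $\mathbf{Flex}$ is a replete subcategory of $\mathbf{2Cat}$. The plan is to prove the two implications separately, with no new construction required.

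For the ``if'' direction, suppose a flexible $2$-category $A$ is isomorphic in $\mathbf{Flex}$ to $QB$ for some bicategory $B$. By Proposition \ref{bicattofibprop}, $QB$ is fibrant in the left-induced model structure. The class of fibrant objects in any model category is closed under isomorphism, since isomorphisms are retracts. Hence $A$ is fibrant.

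For the ``only if'' direction, let $A$ be a fibrant flexible $2$-category. The essential surjectivity part of the proof of Theorem \ref{bicatthm} constructs a bicategory $B$ from $A$ by choosing atomic representatives $g*f$ and isomorphisms $\iota_{g,f}$. It then shows that the induced $2$-functor $\hat{I}\colon QB\longrightarrow A$ is a morphism of flexible $2$-categories and is an isomorphism, using Lemma \ref{stronggenlemma}. So $A\cong QB$ in $\mathbf{Flex}$. Equivalently, this is just the statement that the essential image of $Q\colon\mathbf{Bicat}\longrightarrow\mathbf{Flex}$ is exactly the full subcategory of fibrant objects, which is part of the content of Theorem \ref{bicatthm}.

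I expect no real obstacle. The only point needing a word is the meaning of ``isomorphic''. An isomorphism in $\mathbf{2Cat}$ between flexible $2$-categories is automatically an isomorphism in $\mathbf{Flex}$, because $\mathbf{Flex}\longrightarrow\mathbf{2Cat}$ is an isofibration (\S\ref{algcof}). So the corollary holds whichever ambient category the isomorphism is taken in. With that remark, the corollary can also be read as adding a sixth equivalent condition to Theorem \ref{fibprop}.
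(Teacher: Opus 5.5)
Your proposal is correct and matches the paper's argument. The paper presents this corollary as an immediate consequence of Theorem \ref{bicatthm}, whose proof supplies exactly your two ingredients: Proposition \ref{bicattofibprop} (with closure of fibrant objects under isomorphism) for the ``if'' direction, and the construction of $B$ with the isomorphism $\hat{I}\colon QB \longrightarrow A$ for the ``only if'' direction. Your remark that repleteness makes the choice of ambient category for the isomorphism irrelevant is a harmless extra observation.
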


We may also deduce the following universal property of the category $\mathbf{Bicat}$ (cf.\ \cite[\S6.3]{bourke}).

\begin{corollary} \label{retractcor}
The category of bicategories and normal pseudofunctors is the Cauchy completion of the category of $2$-categories and normal pseudofunctors. 
\end{corollary}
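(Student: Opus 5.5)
We have to show that $\mathbf{Bicat}$ is the Cauchy completion (idempotent completion) of its full subcategory $\mathbf{2Cat}_{\mathrm{nps}}$ on the $2$-categories, where morphisms are normal pseudofunctors. We use the standard characterisation. A fully faithful functor $J \colon \mathcal{C} \to \mathcal{D}$ exhibits $\mathcal{D}$ as the Cauchy completion of $\mathcal{C}$ if and only if two conditions hold: $\mathcal{D}$ is Cauchy complete, and every object of $\mathcal{D}$ is a retract of an object in the image of $J$. (For ordinary categories, Cauchy completeness amounts to idempotents splitting.) The inclusion $\mathbf{2Cat}_{\mathrm{nps}} \to \mathbf{Bicat}$ is fully faithful by definition. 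Every bicategory is a retract in $\mathbf{Bicat}$ of a $2$-category by Proposition \ref{retractlemma}, via $\varepsilon_B \circ \upsilon_B = 1_B$. So the only remaining task is to show that idempotents split in $\mathbf{Bicat}$.

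For this we use Theorem \ref{bicatthm}: $\mathbf{Bicat}$ is equivalent to the full subcategory of $\mathbf{Flex}$ on the fibrant objects. Idempotent splitting is invariant under equivalence, so it suffices to split idempotents in this subcategory. The category $\mathbf{Flex}$ is complete by the Proposition of \S\ref{limsinflex}, so every idempotent $e \colon A \to A$ in $\mathbf{Flex}$ splits, for instance through the equaliser of $e$ and $1_A$. The resulting object $E$ is a retract of $A$ in $\mathbf{Flex}$. If $A$ is fibrant, then $E$ is fibrant too, because fibrant objects in a model category are closed under retracts, as already used in the proof of (iii)$\implies$(i) of Theorem \ref{fibprop}. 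Hence the splitting takes place inside the full subcategory of fibrant objects, and this subcategory is Cauchy complete.

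Alternatively, one can use condition (v) of Theorem \ref{fibprop} directly: it is stable under retracts because a retraction $r$ carries an isomorphism $s(f) \cong g$, with $g$ atomic or an identity, to $f = rs(f) \cong r(g)$, and $r(g)$ is again atomic or an identity. This gives a check that does not go through the model structure.

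Combining the three facts gives the conclusion: the inclusion is fully faithful, every object of $\mathbf{Bicat}$ is a retract of a $2$-category, and $\mathbf{Bicat}$ is idempotent complete. Hence $\mathbf{Bicat}$ is the Cauchy completion of the category of $2$-categories and normal pseudofunctors.

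The step needing the most care is transporting idempotent splitting across the equivalence of Theorem \ref{bicatthm}. This is formal, but one must make sure the splitting in $\mathbf{Flex}$ lands in the fibrant subcategory, which is exactly what retract-closure provides. Nothing beyond results already stated in the paper seems to be needed.
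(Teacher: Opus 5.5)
Your proposal is correct and follows essentially the paper's own two-line argument. Every bicategory is a retract of a $2$-category by Proposition \ref{retractlemma}, and $\mathbf{Bicat}$ is Cauchy complete because, by Theorem \ref{bicatthm}, it is equivalent to the retract-closed full subcategory of fibrant objects in the complete category $\mathbf{Flex}$; you simply make explicit the idempotent-splitting details the paper leaves implicit, and your alternative retract-stability check via condition (v) of Theorem \ref{fibprop} is also valid.
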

\begin{proof}
We showed in Proposition \ref{retractlemma} that every bicategory is a retract of a $2$-category in $\mathbf{Bicat}$. Furthermore, since $\mathbf{Bicat}$ is equivalent to the category of fibrant objects in a model category by Theorem \ref{bicatthm}, it is Cauchy complete.
\end{proof}

\section{Fibrations} \label{fibsec}
In this section, we characterise the fibrations with fibrant codomain in the left-induced model structure on $\mathbf{Flex}$. We approach this characterisation via something of a detour through a more general class of morphisms of flexible $2$-categories, analogous to the inner fibrations of simplicial sets which play a fundamental role in the homotopy theory of quasi-categories. 
\subsection{Inner fibrations}
\begin{definition} A morphism of flexible $2$-categories $F \colon A \longrightarrow B$ is an \emph{inner fibration} if for every morphism $f \colon a \longrightarrow b$ in $A$, every atomic or identity morphism $g \colon Fa \longrightarrow Fb$ and every invertible $2$-cell $\beta \colon Ff \cong g$ in $B$, there exists an atomic or identity morphism $h \colon a \longrightarrow b$ and an invertible $2$-cell $\alpha \colon f \cong h$ in $A$ such that $F\alpha = \beta$. 
\end{definition}

\begin{example} A flexible $2$-category $A$ is fibrant if and only if the unique morphism $A \longrightarrow \mathbb{D}^0$ is an inner fibration. This is a restatement of the equivalence (i)$\iff$(v) of Theorem \ref{fibprop}.
\end{example}

\begin{example} \label{secondexample}
A morphism of flexible $2$-categories is an inner fibration if and only if it has the right lifting property in $\mathbf{Flex}$ with respect to the source inclusions displayed below for all $n \geq 0$.
\begin{equation*}
\left(\cd{
0 \ar[r] & \ar@{..}[r] & \ar[r]& n
}\right)
\qquad
\xrightarrow{\hspace*{3em}}
\qquad
\left(\cd{
& \ar@{.}[r] \ar@{}[dr]|\vcong & \ar[dr] \\
0 \ar[ur] \ar[rrr] &&& n
}\right)
\end{equation*}
Since each of these morphisms is a trivial cofibration, every fibration in the left-induced model structure on $\mathbf{Flex}$ is in particular an inner fibration.
\end{example}

The following proposition shows that the right lifting property with respect to a single morphism suffices to characterise the inner fibrations with fibrant codomain. Note that we let $\mathbb{I}^2$ denote the free $2$-category containing an invertible $2$-cell, which is presented by the following diagram.
\begin{equation*}
\xymatrix{
\bullet \ar@/^.8pc/[r] \ar@/_.8pc/[r] \ar@{}[r]|{\vcong} & \bullet
}
\end{equation*}

\begin{proposition} \label{innerfibprop}
Let $F \colon A \longrightarrow B$ be a morphism of flexible $2$-categories, and suppose that $B$ is fibrant. The following are equivalent:
\begin{enumerate}[\normalfont(i)]
\item $F$ is an inner fibration;
\item $F$ has the right lifting property in $\mathbf{Flex}$ with respect to $\eta_{[2]} \colon [2] \longrightarrow Q[2]$;
\item $A$ is fibrant and $F$ has the right lifting property in $\mathbf{Flex}$ with respect to the source inclusion $\mathbb{D}^1 \longrightarrow \mathbb{I}^2$.
\end{enumerate}
\end{proposition}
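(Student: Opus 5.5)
I would prove the cycle (i)$\implies$(ii)$\implies$(iii)$\implies$(i), using the explicit shape of $Q[2]$ from \S\ref{qsec}. A morphism $Q[2] \longrightarrow X$ in $\mathbf{Flex}$ picks out a composable pair $(u,v)$, an atomic or identity morphism $w$ parallel to $v\circ u$, and an invertible $2$-cell $v \circ u \cong w$. Here the atomic $1$-cell $02$ of $Q[2]$ is sent to an atomic or identity morphism, since morphisms of flexible $2$-categories preserve atomic-or-identity morphisms. Such a map restricts along $\eta_{[2]}$ to the pair $(u,v)$.

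\emph{(i)$\implies$(ii).} Suppose given a square consisting of a pair $(u,v)$ in $A$ and a map $Q[2] \longrightarrow B$ extending $(Fu,Fv)$, that is, an atomic or identity $g$ and an invertible $\beta \colon Fv\circ Fu \cong g$. The definition of inner fibration, applied to $f = v\circ u$, produces an atomic or identity $h$ and an invertible $\alpha \colon v\circ u \cong h$ with $F\alpha = \beta$, and so $Fh = g$. These data define the required diagonal $Q[2] \longrightarrow A$. This step does not use fibrancy of $B$.

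\emph{(ii)$\implies$(iii).} I first show that $A$ is fibrant via Theorem \ref{fibprop}(iv). Given $[2] \longrightarrow A$, the composite into $B$ extends along $\eta_{[2]}$ because $B$ is fibrant, and then (ii) lifts this extension to $A$. Next I lift against $\mathbb{D}^1 \longrightarrow \mathbb{I}^2$. A square consists of a morphism $f$ in $A$ (not necessarily atomic) together with an invertible $\beta \colon Ff \cong g$ in $B$, where $g$ is atomic or identity because $\mathbb{I}^2$ has atomic $1$-cells. I argue by induction on the length $n$ of the atomic decomposition of $f$. For $n \le 1$ I would handle the case directly: for $n=0$ or $n=1$, use (ii) with the pair $(f,1)$, so that $f\circ 1 = f$ and the invertible $2$-cell required is just $\beta$. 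For $n \ge 2$, write $f = f_n \circ f'$. Since $A$ is fibrant, choose an atomic or identity $k$ and an invertible $\iota \colon f' \cong k$. Then apply (ii) to the pair $(k, f_n)$ with target data $\beta \cdot (Ff_n \circ F\iota^{-1}) \colon Ff_n \circ Fk \cong g$. This gives an invertible $\alpha' \colon f_n \circ k \cong h$ with $F\alpha' = \beta\cdot(1\circ F\iota^{-1})$, and the lift is $\alpha = \alpha' \cdot (1 \circ \iota)$. No induction is actually needed, since fibrancy of $A$ already reduces the length.

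\emph{(iii)$\implies$(i).} Since every $1$-cell of $\mathbb{I}^2$ is atomic and $A$ is fibrant, the right lifting property against $\mathbb{D}^1 \longrightarrow \mathbb{I}^2$ is precisely the defining condition of an inner fibration. Given $f$, $g$ and $\beta$, the square picks out $f$ in $A$ and $\beta$ in $B$, and a diagonal gives $h$ atomic or identity with $\alpha \colon f \cong h$ and $F\alpha = \beta$. The main obstacle I expect is bookkeeping rather than substance. I need to check that the maps out of $Q[2]$ and $\mathbb{I}^2$ really are morphisms in $\mathbf{Flex}$ when atomic $1$-cells are sent to identities, and I need to handle the degenerate cases where $u$, $v$ or $g$ is an identity.
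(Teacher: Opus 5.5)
Your steps (i)$\implies$(ii) and (ii)$\implies$(iii) are fine. The $n\le 1$ case of the latter is just an unwinding of the paper's observation that $\mathbb{D}^1 \longrightarrow \mathbb{I}^2$ is a pushout of $\eta_{[2]}$ along the map $[2] \longrightarrow \mathbb{D}^1$ that collapses one edge.

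However, (iii)$\implies$(i) has a genuine gap. It comes from a misreading you make in both of the last two steps. A morphism $\mathbb{D}^1 \longrightarrow A$ in $\mathbf{Flex}$ must send the generating arrow to an atomic or identity morphism; this is the same principle you invoke for the edge $02$ of $Q[2]$. So a lifting square against $\mathbb{D}^1 \longrightarrow \mathbb{I}^2$ only ever involves an atomic or identity $f$, not ``a morphism $f$ in $A$ (not necessarily atomic)''. The right lifting property against $\mathbb{D}^1 \longrightarrow \mathbb{I}^2$ is therefore \emph{not} precisely the definition of inner fibration, which quantifies over all morphisms $f$ of $A$. For $f$ of length at least two there is no square to which your sentence ``the square picks out $f$ in $A$'' applies.

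Your argument never actually uses the fibrancy of $A$, and that hypothesis cannot be dropped. The morphism $[2] \longrightarrow \mathbb{D}^0$ has the right lifting property against $\mathbb{D}^1 \longrightarrow \mathbb{I}^2$: send both $1$-cells of $\mathbb{I}^2$ to the given atomic or identity $f$, and the $2$-cell to $1_f$. Yet it is not an inner fibration, since $12 \circ 01$ is isomorphic to no atomic or identity morphism in $[2]$.

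The missing step is exactly the manoeuvre in your (unnecessary) $n \geq 2$ case of (ii)$\implies$(iii), with the $\mathbb{I}^2$-lift in place of (ii). Given an arbitrary $f$ and $\beta \colon Ff \cong g$, proceed as follows.
\begin{enumerate}[(1)]
\item Use fibrancy of $A$ to choose an atomic or identity $f'$ and an invertible $2$-cell $\iota \colon f \cong f'$.
\item Solve the genuine lifting problem given by $f'$ and $\beta \cdot F\iota^{-1} \colon Ff' \cong g$. This yields an atomic or identity $h$ and $\gamma \colon f' \cong h$ with $F\gamma = \beta \cdot F\iota^{-1}$.
\item Take $\alpha = \gamma \cdot \iota$, so that $F\alpha = \beta$.
\end{enumerate}
This is precisely how the paper proves (iii)$\implies$(i).
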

\begin{proof}
(i)$\implies$(ii). A lifting problem
\begin{equation*}
\cd{
[2] \ar[r] \ar[d]_-{\eta_{[2]}} & A \ar[d]^-F \\
Q[2] \ar[r] \ar@{..>}[ur]& B
}
\end{equation*}
determines a morphism $f \colon a \longrightarrow b$ in $A$ with atomic decomposition of length at most two, an atomic or identity morphism $g \colon Fa \longrightarrow Fb$ in $B$, and an invertible $2$-cell $\beta \colon Ff \cong g$ in $B$. Since $F$ is an inner fibration, there exists an atomic or identity morphism $h \colon a \longrightarrow b$ and an invertible $2$-cell $\alpha \colon f \cong h$ in $A$ such that $F\alpha = \beta$, which determines a solution to the lifting problem.

(ii)$\implies$(iii). Since $B$ is fibrant, it follows from the implication (iv)$\implies$(i) of Theorem \ref{fibprop} that $A$ is also fibrant. Since the source inclusion $\mathbb{D}^1 \longrightarrow \mathbb{I}^2$ is a pushout of the morphism $\eta_{[2]} \colon [2] \longrightarrow Q[2]$ in $\mathbf{Flex}$, any morphism which has the right lifting property with respect to the latter also has the right lifting property with respect to the former.

(iii)$\implies$(i). Let $f \colon a \longrightarrow b$ be a morphism in $A$, $g \colon Fa \longrightarrow Fb$ an atomic or identity morphism in $B$, and $\beta \colon Ff \cong g$ an invertible $2$-cell in $B$. Since $A$ is fibrant, Theorem \ref{fibprop} implies that there exist an atomic or identity morphism $f' \colon a \longrightarrow b$ and an invertible $2$-cell $\alpha \colon f \cong f'$ in $A$. The morphism $f'$ and the composite invertible $2$-cell $\beta\cdot F\alpha^{-1} \colon Ff' \cong g$ determine a lifting problem
\begin{equation*}
\cd{
\mathbb{D}^1 \ar[r] \ar[d] & A \ar[d]^-F \\
\mathbb{I}^2 \ar[r] \ar@{..>}[ur] & B
}
\end{equation*}
whose solution determines an atomic or identity morphism $h \colon a \longrightarrow b$ and an invertible $2$-cell $\gamma \colon f' \cong h$ in $A$ such that  $F\gamma = \beta\cdot F\alpha^{-1}$, whence $F(\gamma \cdot \alpha) = \beta$. Hence $F$ is an inner fibration.
\end{proof}

It is possible to construct a weak factorisation system on $\mathbf{Flex}$ whose right class consists of the inner fibrations by applying the small object argument to the set of morphisms described in Example \ref{secondexample}. We shall make an alternative construction of this weak factorisation system, which has the advantage of giving the following explicit description of the left class.

\begin{definition}
A morphism of flexible $2$-categories is \emph{inner anodyne}, or an \emph{inner anodyne extension}, if it is bijective on objects, faithful, essentially full, and locally fully faithful.
\end{definition}
\begin{example}
A morphism of flexible $2$-categories is inner anodyne if and only if it is a bijective-on-objects trivial cofibration. In particular, the unit morphism $\eta_A \colon A \longrightarrow QA$ is inner anodyne for every flexible $2$-category $A$.
\end{example}

We shall produce a functorial factorisation of morphisms of flexible $2$-categories into inner anodyne extensions followed by inner fibrations by means of the following construction.

We define the \emph{strand $2$-category} of a morphism of flexible $2$-categories  $F \colon A \longrightarrow B$ to be the flexible $2$-category $\mathcal{S}(F)$ whose objects are the objects of $A$, in which an atomic or identity morphism $a \longrightarrow b$ is a triple $(f,g,\alpha)$, where $f \colon a \longrightarrow b$ is a morphism in $A$,  $g \colon Fa \longrightarrow Fb$ is an atomic or identity morphism in $B$, and $\alpha \colon Ff \cong g$ is an invertible $2$-cell in $B$, and where a $2$-cell $(f_m,g_m,\alpha_m)\circ\cdots\circ(f_1,g_1,\alpha_1) \Longrightarrow (h_n,k_n,\beta_n)\circ\cdots\circ(h_1,k_1,\beta_1)$ is a pair $(\gamma,\delta)$ where $\gamma \colon f_m \circ \cdots \circ f_1 \Longrightarrow h_n \circ \cdots \circ h_1$ is a $2$-cell in $A$ and $\delta \colon g_m\circ\cdots\circ g_1 \Longrightarrow k_n \circ \cdots \circ k_1$ is a $2$-cell in $B$ such that the following diagram commutes.
\begin{equation*}
\cd{
F(f_m \circ \cdots \circ f_1) \ar@{=>}[r]^-{F\gamma} \ar@{=>}[d]_-{\alpha_m \circ\cdots\circ \alpha_1} \ar@{}[dr]|{=} & F(h_n \circ \cdots \circ h_1) \ar@{=>}[d]^-{\beta_n\circ\cdots\circ \beta_1} \\
g_m\circ\cdots\circ g_1  \ar@{=>}[r]_-{\delta} & k_n \circ \cdots \circ k_1
}
\end{equation*}
The $2$-cell identities and composition operations in $\mathcal{S}(F)$ are induced from those in $A$ and $B$.

\begin{remark}
The strand $2$-category of a morphism of flexible $2$-categories $F \colon A \longrightarrow B$ is the universal flexible $2$-category equipped with a pair of morphisms $\mathcal{S}(F) \longrightarrow QA$ and $\mathcal{S}(F) \longrightarrow B$ and an invertible icon as in the following diagram.
\begin{equation*}
\cd{
\mathcal{S}(F) \ar[r] \ar[d] \ar@{}[dr]|{\cong} & B \ar[d]^-{\eta_B} \\
QA \ar[r]_-{QF} & QB
}
\end{equation*}
\end{remark}

Every morphism of flexible $2$-categories $F \colon A \longrightarrow B$ factors through its strand $2$-category via morphisms $J_F \colon A \longrightarrow \mathcal{S}(F)$ and $P_F \colon \mathcal{S}(F) \longrightarrow B$ which may be described as follows. The morphism $J_F\colon A \longrightarrow \mathcal{S}(F)$ is the identity on objects, sends an atomic morphism $f \colon a \longrightarrow b$ to the atomic morphism $(f, Ff, 1_{Ff})$, and sends a $2$-cell $\alpha$ to the pair $(\alpha,F\alpha)$. The morphism $P_F \colon \mathcal{S}(F) \longrightarrow B$ sends an object $a$ to $Fa$, sends an atomic morphism $(f,g,\alpha)$ to $g$, and sends a $2$-cell $(\gamma,\delta)$ to $\delta$. 

The strand $2$-category construction extends to a functor from $\mathbf{Fun}(\mathbb{D}^1,\mathbf{Flex})$ to $\mathbf{Fun}([2],\mathbf{Flex})$ which sends a commutative square in $\mathbf{Flex}$ as on the left below 
\begin{equation*}
\cd{
A \ar[d]_-F \ar[r]^-S \ar@{}[dr]|= & C \ar[d]^-G \\
B \ar[r]_-T & D
}
\qquad \qquad
\cd[@C=2.5em]{
A \ar@{}[dr]|= \ar[r]^-S \ar[d]_-{J_F} & C \ar[d]^-{J_G} \\
\mathcal{S}(F) \ar[r]^-{\mathcal{S}(S,T)} \ar[d]_-{P_F}  \ar@{}[dr]|= & \mathcal{S}(G) \ar[d]^-{P_G} \\
B \ar[r]_-T & D
}
\end{equation*}
to the commutative diagram  on the right above, where the morphism $\mathcal{S}(S,T) \colon \mathcal{S}(F) \longrightarrow \mathcal{S}(G)$ sends an object $a$ to $Sa$, a morphism $(f,g,\alpha)$ to $(Sf,Tg,T\alpha)$, and a $2$-cell $(\gamma,\delta)$ to $(S\gamma,T\delta)$.
This functor thus defines a functorial factorisation on $\mathbf{Flex}$, which is to say that it is a section of the composition functor. The following proposition shows that this functorial factorisation is an (inner anodyne, inner fibration) factorisation.

\begin{proposition} \label{innerprop}
Let $F \colon A \longrightarrow B$ be a morphism of flexible $2$-categories. Then $J_F \colon A \longrightarrow \mathcal{S}(F)$ is inner anodyne and $P_F \colon \mathcal{S}(F) \longrightarrow B$ is an inner fibration.
\end{proposition}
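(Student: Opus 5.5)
We verify the two claims separately and directly from the explicit description of $\mathcal{S}(F)$. Throughout, we use that a $2$-cell in $\mathcal{S}(F)$ between two paths of atomic morphisms is a pair $(\gamma,\delta)$ subject to the displayed commutativity condition. Since the $\alpha_i$ and $\beta_j$ are invertible, $\delta$ is uniquely determined by $\gamma$ via
\[
\delta = (\beta_n\circ\cdots\circ\beta_1)\cdot F\gamma\cdot(\alpha_m\circ\cdots\circ\alpha_1)^{-1}.
\]
Hence the $2$-cells of $\mathcal{S}(F)$ between two given $1$-cells correspond bijectively to the $2$-cells in $A$ between the underlying composites $f_m\circ\cdots\circ f_1$ and $h_n\circ\cdots\circ h_1$.

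\textbf{$J_F$ is inner anodyne.} $J_F$ is the identity on objects, so it is bijective on objects. It is faithful because the underlying functor sends a path $(f_n,\ldots,f_1)$ of atomic morphisms to the path $((f_n,Ff_n,1),\ldots,(f_1,Ff_1,1))$, and distinct atomic decompositions give distinct paths. It is locally fully faithful by the bijection above: the components $\alpha_i$ of the image atoms are identities, so $\delta=F\gamma$ is forced, and a $2$-cell $(\gamma,F\gamma)$ is exactly $J_F\gamma$.

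For essential fullness, take any $1$-cell $(f_m,g_m,\alpha_m)\circ\cdots\circ(f_1,g_1,\alpha_1)$ of $\mathcal{S}(F)$ and compare it with $J_F(f_m\circ\cdots\circ f_1)$. The pair
\[
(1_{f_m\circ\cdots\circ f_1},\ \alpha_m\circ\cdots\circ\alpha_1)
\]
satisfies the commutativity condition, since the atoms of $J_F(f)$ carry identity $2$-cells. It is invertible, with inverse the pair $(1,(\alpha_m\circ\cdots\circ\alpha_1)^{-1})$. This gives the required isomorphism.

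\textbf{$P_F$ is an inner fibration.} Take a $1$-cell $\bm{x}=(f_m,g_m,\alpha_m)\circ\cdots\circ(f_1,g_1,\alpha_1)\colon a\to b$ in $\mathcal{S}(F)$, an atomic or identity $k\colon Fa\to Fb$ in $B$, and an invertible $\beta\colon g_m\circ\cdots\circ g_1\cong k$. Write $f=f_m\circ\cdots\circ f_1$ and $\alpha=\alpha_m\circ\cdots\circ\alpha_1$, and form the atomic morphism
\[
h=(f,\,k,\,\beta\cdot\alpha)\colon a\to b
\]
of $\mathcal{S}(F)$. If $k$ is an identity and we want an identity lift, we must handle this separately; in that case one may still take the atomic triple, which is allowed since the definition permits an atomic or identity lift. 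The $2$-cell $(1_f,\beta)\colon\bm{x}\Rightarrow h$ satisfies the commutativity condition, because $\beta\cdot\alpha=\beta\cdot\alpha\cdot F(1_f)$. It is invertible with inverse $(1_f,\beta^{-1})$, and $P_F(1_f,\beta)=\beta$, as required.

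The main care needed is the bookkeeping that the $2$-cell structure of $\mathcal{S}(F)$ is genuinely a $2$-category in which these pairs compose and invert componentwise. We take this as given from the construction, since composition is induced from $A$ and $B$ and horizontal composites of the $\alpha$'s are functorial. We expect the subtle point to be checking faithfulness of $J_F$, namely that distinct morphisms of $A$ give distinct paths of atomic triples. This holds because an atomic triple records its first component $f$, so the path determines the atomic decomposition of the original morphism.
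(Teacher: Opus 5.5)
Your proof is correct and is essentially the paper's: the invertible $2$-cell $(1,\alpha_m\circ\cdots\circ\alpha_1)$ out of $J_F(f_m\circ\cdots\circ f_1)$ witnessing essential fullness, and the lift $(1,\beta)$ onto the single triple $(f,k,\beta\cdot\alpha)$ for $P_F$, are exactly the paper's constructions. Beyond that, you spell out the faithfulness and local full faithfulness of $J_F$, which the paper calls evident, by observing that $\delta$ is determined by $\gamma$. Your hesitation about the case where $k$ is an identity is unnecessary, since every triple is by definition an atomic or identity morphism of $\mathcal{S}(F)$.
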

\begin{proof}
The morphism $J_F$ is evidently bijective on objects, faithful, and locally fully faithful. To see that it is essentially full, let $(f,g,\alpha) \colon a \longrightarrow b$ be an atomic or identity morphism in $\mathcal{S}(F)$, and let $f$ have atomic decomposition $f = f_n\circ\cdots\circ f_1$. Then $(1_f,\alpha) \colon J_Ff_n \circ \cdots \circ J_Ff_1 \cong (f,g,\alpha)$ is an invertible $2$-cell in $\mathcal{S}(F)$. Hence $J_F$ is inner anodyne. 

To see that $P_F$ is an inner fibration, let $(f_n,g_n,\alpha_n)\circ\cdots\circ(f_1,g_1,\alpha_1)$ be a composite of atomic morphisms in $\mathcal{S}(F)$ and let $\beta \colon g_n\circ\cdots\circ g_1 \cong h$ be an invertible $2$-cell in $B$ such that $h$ is atomic or an identity. Then $(1,\beta) \colon (f_n,g_n,\alpha_n)\circ\cdots\circ(f_1,g_1,\alpha_1) \cong (f_n\circ\cdots\circ f_1,h,\beta\cdot(\alpha_n\circ \cdots \circ \alpha_1))$ is an invertible $2$-cell in $\mathcal{S}(F)$ with atomic or identity codomain and is sent by $P_F$ to $\beta$. Hence $P_F$ is an inner fibration.
\end{proof}

We shall use the following proposition to show that the above functorial factorisation forms a weak factorisation system on $\mathbf{Flex}$.
\begin{proposition} \label{fillerprop}
A morphism of flexible $2$-categories $F \colon A \longrightarrow B$ is inner anodyne if and only if the lifting problem on the left below admits a solution, and is an inner fibration if and only if the lifting problem on the right below admits a solution.
\begin{equation*}
\cd{
A \ar[r]^-{J_F} \ar[d]_-F & \mathcal{S}(F) \ar[d]^-{P_F} \\
B \ar@{.>}[ur] \ar@{=}[r] & B
}
\qquad\qquad
\cd{
A \ar@{=}[r] \ar[d]_-{J_F} & A \ar[d]^-{F} \\
\mathcal{S}(F) \ar@{.>}[ur] \ar[r]_-{P_F} & B
}
\end{equation*}
\end{proposition}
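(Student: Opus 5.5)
We prove the two equivalences separately; in each, one direction is a direct construction and the other extracts the relevant property from a given filler.

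\emph{Left square.} Suppose $F$ is inner anodyne. Since $F$ is bijective on objects and faithful, we may identify the objects of $B$ with those of $A$. Since $F$ is essentially full, we choose, for each atomic morphism $g \colon Fa \longrightarrow Fb$ in $B$, a morphism $f_g \colon a \longrightarrow b$ in $A$ and an invertible $2$-cell $\alpha_g \colon Ff_g \cong g$. When $g = Ff$ for an atomic $f$, we take $f_g = f$ and $\alpha_g = 1$; this choice is well defined because $F$ is faithful. We then define the diagonal $L \colon B \longrightarrow \mathcal{S}(F)$ on objects by the inverse of $F$, and on atomic morphisms by $g \longmapsto (f_g, g, \alpha_g)$. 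On $2$-cells, $\delta \colon g_m\circ\cdots\circ g_1 \Longrightarrow k_n\circ\cdots\circ k_1$ is sent to $(\gamma,\delta)$, where $\gamma$ is the unique $2$-cell in $A$ with $F\gamma = (\alpha_{k_n}\circ\cdots\circ\alpha_{k_1})^{-1}\cdot\delta\cdot(\alpha_{g_m}\circ\cdots\circ\alpha_{g_1})$; it exists and is unique because $F$ is locally fully faithful. Uniqueness of $\gamma$ gives functoriality of $L$ on $2$-cells. By construction $L$ is a morphism of flexible $2$-categories, $P_F L = 1_B$, and $LF = J_F$ thanks to the normalised choice on atomic morphisms in the image of $F$.

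Conversely, suppose a filler $L$ exists, so that $J_F = LF$ and $P_F L = 1$. Then $P_F$ is bijective on objects, since it is a retraction of $L$ and $P_F J_F = F$ with $J_F$ bijective on objects; hence $F$ is bijective on objects. Faithfulness of $F$ follows from faithfulness of $J_F = LF$. For local full faithfulness: $J_F$ is locally fully faithful and $L$ is locally faithful as a section of $P_F$, so $F$ is locally fully faithful. For essential fullness: an atomic $g$ in $B$ has $L g = (f,g,\alpha)$, and the invertible $2$-cell $(1_f,\alpha) \colon J_F f \cong Lg$ in $\mathcal{S}(F)$ is sent by $P_F$ to $\alpha \colon Ff \cong g$. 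Composites of atomic morphisms follow by composing these isomorphisms. The remaining point to check carefully is that $F$ inherits local fullness; this uses that $L$ is locally fully faithful onto its image and that $J_F = LF$.

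\emph{Right square.} If a retraction $R \colon \mathcal{S}(F) \longrightarrow A$ with $RJ_F = 1$ and $FR = P_F$ exists, then given $f$, an atomic or identity $g$, and $\beta \colon Ff \cong g$, set $h = R(f,g,\beta)$ and $\alpha = R(1_f,\beta)$. Here $(1_f,\beta)$ is the invertible $2$-cell $J_F f \cong (f,g,\beta)$ noted in the proof of Proposition~\ref{innerprop}, so $\alpha \colon f \cong h$. We have $F\alpha = P_F(1_f,\beta) = \beta$, and $h$ is atomic or an identity because $R$ is a morphism of free categories. Hence $F$ is an inner fibration. Conversely, if $F$ is an inner fibration, choose for each atomic $(f,g,\beta)$ a lift $h$ together with $\alpha_{(f,g,\beta)} \colon f \cong h$ such that $F\alpha = \beta$, normalised so that $h = f$ and $\alpha = 1$ when $(f,g,\beta) = J_F f$ with $f$ atomic. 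Define $R$ on atomic morphisms by $(f,g,\beta) \longmapsto h$, and on $2$-cells by sending $(\gamma,\delta)$ to $\alpha'\cdot\gamma\cdot\alpha^{-1}$, where $\alpha$ and $\alpha'$ are the horizontal composites of the chosen isomorphisms at the source and target respectively.

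\emph{Verification.} It remains to check $FR = P_F$ on $2$-cells: $F(\alpha'\cdot\gamma\cdot\alpha^{-1}) = \beta'\cdot F\gamma\cdot\beta^{-1} = \delta$ by the compatibility square in the definition of $\mathcal{S}(F)$. Preservation of horizontal composition follows because the chosen isomorphisms for a concatenated path are the horizontal composites of those for its pieces, and $RJ_F = 1$ follows from the normalisation. I expect the main obstacle to be the bookkeeping of these normalisations together with the well-definedness of the choices when $F$ is not injective. The converse direction for the left square, extracting local fullness from the filler, is the other step I would check first.
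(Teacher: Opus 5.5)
Your proposal is correct. Your constructions of the fillers in the ``only if'' directions are essentially the paper's. The real difference is in the ``if'' directions. The paper disposes of them in one line: a filler in the left (resp.\ right) square exhibits $F$ as a retract of $J_F$ (resp.\ $P_F$) in the arrow category. Proposition~\ref{innerprop}, together with closure of both classes under retracts, then finishes the argument.

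You instead read the required properties directly off the filler. This is more hands-on, but it is self-contained and never uses retract-closure. Indeed, combined with weak orthogonality, your version would yield retract-closure formally. The paper's route is shorter, but it leans on the retract-closure it calls evident.

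Two small points of tidying in your left-hand converse.
\begin{itemize}
\item Injectivity of $F$ on objects comes from $LF = J_F$, just as faithfulness does. Surjectivity on objects comes from $P_F L = 1_B$. The sentence appealing to $P_F J_F = F$ does not by itself give injectivity.
\item Your closing worry about local fullness is unnecessary, because the argument you already gave is complete. For $\beta \colon Ff \Longrightarrow Fg$, the $2$-cell $L\beta \colon J_F f \Longrightarrow J_F g$ lifts along $J_F$ to some $\alpha$. Then $LF\alpha = L\beta$ forces $F\alpha = \beta$, since $L$ is locally faithful as a section of $P_F$. Nothing about $L$ being full onto its image is needed.
\end{itemize}

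Your normalisation in the right-hand construction is the right one. You fix $h = f$ and the identity $2$-cell only for triples of the form $J_F f$ with $f$ atomic. The paper's text instead sets $H(f,g,\alpha) = f$ whenever $\alpha$ is an identity, and that clause breaks when $f$ is a non-atomic composite with atomic image. This can happen when $F$ sends some atomic morphisms to identities. For example, take the inner fibration $Q[2] \longrightarrow \mathbb{D}^1$ that collapses $01$ and sends both $12$ and $02$ to the non-identity arrow. The triple $((12,01), Ff, 1)$ is then an atomic morphism of $\mathcal{S}(F)$, and the paper's $H$ would send it to the non-atomic morphism $(12,01)$. Your version avoids this.
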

\begin{proof}
Since the classes of inner anodyne extensions and inner fibrations are closed under retracts, it suffices by Proposition \ref{innerprop} to prove the ``only if'' directions. First, suppose that $F$ is inner anodyne and consider the lifting problem on the left above. 
We construct a solution $G \colon B \longrightarrow \mathcal{S}(F)$ as follows. For each object $b \in B$, we define $Gb$ to be the unique object of $A$ such that $FGb = b$. For each atomic morphism $g \colon b \longrightarrow c$ in $B$, if $g$ is in the image of $F$, let $f$ be the unique atomic morphism in $A$ such that $Ff = g$ and define $Gg = (f,g,1_g)$; otherwise, we choose a morphism $f \colon Gb \longrightarrow Gc$ in $A$ and an invertible $2$-cell $\alpha \colon Gf \cong g$ in $B$, which exist since $F$ is essentially full, and define $Gg = (f,g,\alpha)$. For each $2$-cell $\beta \colon g \Longrightarrow k$ in $B$, where $g$ and $k$ have atomic decompositions $g = g_m\circ \cdots \circ g_1$ and $k = k_n \circ \cdots \circ k_1$, let each $Gg_i = (f_i,g_i,\alpha_i)$ and $Gk_j = (h_j,k_j,\gamma_j)$; we define $G\beta$ to be the pair $(\delta,\beta)$ where $\delta \colon f_m \circ \cdots \circ f_1 \Longrightarrow h_n \circ \cdots \circ h_1$ is the unique $2$-cell in $A$ such that $F\delta$ is equal to the following composite.
\begin{equation*}
\cd{
F(f_m \circ \cdots \circ f_1) \ar@{=>}[rr]^-{\alpha_m \circ \cdots \circ \alpha_1} && g \ar@{=>}[r]^-{\beta} & k \ar@{=>}[rr]^-{\gamma_n^{-1} \circ \cdots \circ \gamma_1^{-1}} && F(h_n \circ \cdots \circ h_1)
}
\end{equation*}
This defines a solution to the lifting problem.

Now, suppose that $F$ is an inner fibration and consider the lifting problem on the right above. 
We construct a solution $H \colon \mathcal{S}(F) \longrightarrow A$ as follows. We define $H$ to be the identity on objects. For each atomic or identity morphism $(f,g,\alpha) \colon a \longrightarrow b$ of $\mathcal{S}(F)$,  if $\alpha$ is an identity, we define $H(f,g,\alpha) = f$; otherwise, we choose an atomic or identity morphism $h \colon a \longrightarrow b$ such that $Fh = g$ and an invertible $2$-cell $\beta \colon f \cong h$ such that $F\beta = \alpha$, as we may since $F$ is an inner fibration, and define $H(f,g,\alpha) = h$; we denote the $2$-cell $\beta$ by $\theta_{(f,g,\alpha)}$. Finally, for each $2$-cell $(\gamma,\delta) \colon (f_m,g_m,\alpha_m)\circ\cdots\circ(f_1,g_1,\alpha_1) \Longrightarrow (h_n,k_n,\beta_n)\circ\cdots\circ(h_1,k_1,\beta_1)$ in $\mathcal{S}(F)$, we define $H(\gamma,\delta)$ to be the following composite.
\begin{equation*}
\cd{
 f_m \circ \cdots \circ f_1 \ar@{=>}[r]^-{\gamma} & h_n\circ \cdots \circ h_1 \ar@{=>}[d]^-{\theta_{(h_n,k_n, \beta_n)}\circ \cdots\circ \theta_{(h_1,k_1,\beta_1)}} \\
H(f_m,g_m,\alpha_m) \circ \cdots \circ H(f_1,g_1,\alpha_1) \ar@{=>}[u]^-{\theta^{-1}_{(f_m,g_m,\alpha_m)} \circ \cdots \circ \theta^{-1}_{(f_1,g_1,\alpha_1)}} & H(h_n,k_n, \beta_n) \circ \cdots \circ H(h_1,k_1,\beta_1) \\
}
\end{equation*}
This defines a solution to the lifting problem.
\end{proof}

It is now a routine matter to complete the proof of the following theorem.

\begin{proposition} \label{innerwfsprop}
The inner anodyne extensions and inner fibrations form the left class and right class respectively of a weak factorisation system on $\mathbf{Flex}$.
\end{proposition}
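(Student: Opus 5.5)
The argument is the standard retract argument applied to the functorial factorisation $F = P_F \circ J_F$ through the strand $2$-category. By Proposition \ref{innerprop}, every morphism of flexible $2$-categories factors as an inner anodyne extension followed by an inner fibration, so the factorisation axiom holds. Both classes are closed under retracts, since they are defined by conditions stable under retracts. What remains is to show that every inner anodyne extension $F \colon A \longrightarrow B$ has the left lifting property with respect to every inner fibration $G \colon C \longrightarrow D$.

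To do this, take a commutative square $S \colon A \longrightarrow C$, $T \colon B \longrightarrow D$ with $GS = TF$. Apply the functoriality of the strand construction to obtain $\mathcal{S}(S,T) \colon \mathcal{S}(F) \longrightarrow \mathcal{S}(G)$, which commutes with the $J$'s and $P$'s. Proposition \ref{fillerprop} then supplies two fillers.
\begin{itemize}
\item Since $F$ is inner anodyne, there is a section $K \colon B \longrightarrow \mathcal{S}(F)$ with $P_F K = 1_B$ and $K F = J_F$.
\item Since $G$ is an inner fibration, there is a retraction $H \colon \mathcal{S}(G) \longrightarrow C$ with $H J_G = 1_C$ and $G H = P_G$.
\end{itemize}
The proposed diagonal filler is $L = H \circ \mathcal{S}(S,T) \circ K \colon B \longrightarrow C$. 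The checks are:
\begin{align*}
LF &= H\,\mathcal{S}(S,T)\,J_F = H J_G S = S,\\
GL &= P_G\,\mathcal{S}(S,T)\,K = T P_F K = T.
\end{align*}

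I expect no real obstacle: all the substantive work is already contained in Propositions \ref{innerprop} and \ref{fillerprop}. The only points needing care are confirming that the naturality squares $\mathcal{S}(S,T) J_F = J_G S$ and $P_G\,\mathcal{S}(S,T) = T P_F$ hold on the nose, which follows directly from the definitions of these functors, and that closure under retracts is genuinely evident. For the latter, note that bijectivity on objects, faithfulness, essential fullness and local full faithfulness each pass to retracts, and that the inner fibration lifting condition passes to retracts of arrows.
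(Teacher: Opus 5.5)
Your proposal is correct and matches the paper's proof: factorisations come from Proposition \ref{innerprop}, retract closure is evident, and the diagonal filler is the composite $H \circ \mathcal{S}(S,T) \circ K$ of the two lifts from Proposition \ref{fillerprop} with the functorial strand construction. Your verification of $LF = S$ and $GL = T$, using the on-the-nose equations $\mathcal{S}(S,T) J_F = J_G S$ and $P_G\,\mathcal{S}(S,T) = T P_F$, is precisely what the paper's diagram encodes.
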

\begin{proof}
The existence of factorisations is Proposition \ref{innerprop}, while the closure of the classes under retracts is evident. It remains to prove that the classes of inner anodyne extensions and inner fibrations are weakly orthogonal. Consider therefore a lifting problem
\begin{equation*}
\cd{
A \ar[d]_-F \ar[r]^-S & C \ar[d]^-G \\
B \ar@{..>}[ur] \ar[r]_-T & D
}
\end{equation*}
where $F$ is inner anodyne and $G$ is an inner fibration. We can produce a solution using the functorial factorisation provided by the strand $2$-category construction and the lifts provided by Proposition \ref{fillerprop}:\ take the composite morphism from $B$ to $C$ in the diagram below.
\begin{equation*}
\begin{gathered}[b]
\xymatrix@C=3em{
A \ar@{=}[r] \ar@{=}[d] & A \ar[r]^-S \ar[d]^-{J_F} & C \ar@{=}[r] \ar[d]_-{J_G} & C \ar[d]^-G \\
A \ar[d]_-F \ar[r]^-{J_F} & \mathcal{S}(F) \ar[r]^-{\mathcal{S}(S,T)} \ar[d]^-{P_F} & \mathcal{S}(G) \ar@{..>}[ur] \ar[r]_-{P_G} \ar[d]_-{P_G} & D \ar@{=}[d] \\
B \ar@{..>}[ur] \ar@{=}[r] & B \ar[r]_-T & D \ar@{=}[r] & D 
}\\[-\dp\strutbox]\end{gathered} 
\qedhere
\end{equation*}
\end{proof}

\subsection{Fibrations with fibrant codomain}
The fibrations in the left-induced model structure on $\mathbf{Flex}$ are, by definition, those morphisms of flexible $2$-categories that have the right lifting property in $\mathbf{Flex}$ with respect to all trivial cofibrations. While a more explicit description of arbitrary fibrations remains elusive, we are able to give several characterisations of the fibrations with fibrant codomain. (Compare the similar state of affairs for Joyal's model structure for quasi-categories.) We first give two preparatory lemmas and introduce some more terminology.

\begin{lemma} \label{innerfiblemma}
An inner fibration with fibrant codomain is a fibration in the left-induced model structure on $\mathbf{Flex}$ if and only if it has the right lifting property in $\mathbf{Flex}$ with respect to all trivial cofibrations between fibrant flexible $2$-categories.
\end{lemma}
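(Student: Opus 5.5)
The ``only if'' direction is immediate, since a fibration has the right lifting property with respect to all trivial cofibrations. For the ``if'' direction, I plan to reduce an arbitrary lifting problem against a trivial cofibration to one between fibrant objects. The tools are the weak factorisation system of Proposition \ref{innerwfsprop} and the observation that inner anodyne extensions are trivial cofibrations. Throughout, let $F \colon A \longrightarrow B$ be an inner fibration with $B$ fibrant. By Proposition \ref{innerfibprop}, $A$ is then fibrant, so both $A \longrightarrow \mathbb{D}^0$ and $B \longrightarrow \mathbb{D}^0$ are inner fibrations.

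Consider a lifting problem given by a trivial cofibration $J \colon X \longrightarrow Y$, a morphism $S \colon X \longrightarrow A$ and a morphism $T \colon Y \longrightarrow B$ with $FS = TJ$.

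\emph{Fibrant replacement of $X$.} Apply the strand factorisation to $X \longrightarrow \mathbb{D}^0$ to obtain an inner anodyne $j \colon X \longrightarrow X'$ followed by an inner fibration $X' \longrightarrow \mathbb{D}^0$. Thus $X'$ is fibrant. Since $A \longrightarrow \mathbb{D}^0$ is an inner fibration, Proposition \ref{innerwfsprop} extends $S$ along $j$ to a morphism $S' \colon X' \longrightarrow A$.

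\emph{Pushout.} Form the pushout $Y' = Y +_X X'$, with maps $J' \colon X' \longrightarrow Y'$ and $j' \colon Y \longrightarrow Y'$. Then $J'$ is a cofibration, being a pushout of one. Also $j'$ is inner anodyne, since left classes of weak factorisation systems are closed under pushout. The morphisms $T$ and $FS'$ induce $T' \colon Y' \longrightarrow B$, because $FS'j = FS = TJ$.

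\emph{Fibrant replacement of $Y'$.} Factor $Y' \longrightarrow \mathbb{D}^0$ by the strand construction as an inner anodyne $k \colon Y' \longrightarrow Y''$ with $Y''$ fibrant. Since $B \longrightarrow \mathbb{D}^0$ is an inner fibration, extend $T'$ along $k$ to $T'' \colon Y'' \longrightarrow B$.

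\emph{The reduced lifting problem.} Set $J'' = kJ' \colon X' \longrightarrow Y''$, a morphism between fibrant objects.
\begin{itemize}
\item $J''$ is a cofibration, since $J'$ and $k$ are.
\item $J''$ is a weak equivalence by two-out-of-three. Indeed, $J''j = kj'J$, and $j$, $J$, $j'$ and $k$ are all weak equivalences, inner anodyne extensions being trivial cofibrations.
\item The square with top $S'$ and bottom $T''$ commutes: $T''J'' = T''kJ' = T'J' = FS'$.
\end{itemize}
By hypothesis there is then a lift $L \colon Y'' \longrightarrow A$ with $LJ'' = S'$ and $FL = T''$.

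\emph{The original lift.} Set $L' = L k j' \colon Y \longrightarrow A$. It restricts correctly along $J$: $L'J = LkJ'j = S'j = S$. It lies over $T$: $FL' = T''kj' = T'j' = T$.

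The only delicate points are two verifications: that the pushout and composite $J''$ are still trivial cofibrations, and that the extended morphisms make the reduced square commute. Both are bookkeeping, handled by the closure properties of left classes and by two-out-of-three as above.
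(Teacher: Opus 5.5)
Your proof is correct, but it takes a different route from the paper's.

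\textbf{The paper's route.} In your notation, it avoids pushouts by using the functor $Q$ and the naturality of $\eta$.
\begin{enumerate}
\item It extends $T$ along the trivial cofibration $\eta_Y \colon Y \longrightarrow QY$ to some $U \colon QY \longrightarrow B$, using fibrancy of $B$.
\item It uses the inner fibration property of $F$ to solve a \emph{relative} lifting problem of $\eta_X$ against $F$. This gives $V \colon QX \longrightarrow A$ with $V\eta_X = S$ and $FV = U \circ QJ$.
\item It observes that $QJ \colon QX \longrightarrow QY$ is itself a trivial cofibration between fibrant objects. The hypothesis then gives $W \colon QY \longrightarrow A$, and $W\eta_Y$ solves the original problem.
\end{enumerate}

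\textbf{Your route.} You do the standard model-categorical reduction instead: fibrant replacement of $X$, a pushout, then a second fibrant replacement. Compared with the paper:
\begin{itemize}
\item You pay for an extra pushout and a second replacement.
\item In exchange you need nothing about $Q$, such as the inspection that $QJ$ is a cofibration and biequivalence.
\item You use the inner fibration hypothesis only to conclude that $A$ is fibrant. All your extensions are lifts against $A \longrightarrow \mathbb{D}^0$ and $B \longrightarrow \mathbb{D}^0$, and compatibility over $B$ comes from the pushout rather than from a relative lift.
\end{itemize}

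\textbf{What each buys.} Your argument actually proves a more general fact, valid in any model category: a morphism between fibrant objects is a fibration as soon as it lifts against all trivial cofibrations between fibrant objects. The paper's argument is shorter and genuinely uses the relative lifting property of the inner fibration $F$ against $\eta_X$; it never passes through the fibrancy of $A$.

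One small simplification: $J'$ is a pushout of the trivial cofibration $J$, so it is already a trivial cofibration. Hence $J'' = kJ'$ is a trivial cofibration without any appeal to two-out-of-three.
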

\begin{proof}
Let $G \colon C \longrightarrow D$ be an inner fibration and suppose that $D$ is fibrant. If $G$ is a fibration, then by definition it has the right lifting property in $\mathbf{Flex}$ with respect to all trivial cofibrations. Conversely, suppose that $G$ has the right lifting property in $\mathbf{Flex}$ with respect to all trivial cofibrations between fibrant objects. Consider now a lifting problem in $\mathbf{Flex}$
\begin{equation*}
\cd{
A \ar[r]^-S \ar[d]_-F & C \ar[d]^-G \\
B \ar@{..>}[ur] \ar[r]_-T & D
}
\end{equation*}
where $F \colon A \longrightarrow B$ is a trivial cofibration. Since $D$ is fibrant, it has the right lifting property with respect to the trivial cofibration $\eta_B \colon B\longrightarrow QB$, and so there exists a morphism $U \colon QB \longrightarrow D$ making the diagram below commute.
\begin{equation*}
\cd{
A  \ar[rr]^-S \ar[d]_-F &  & C \ar[d]^-G \\
B \ar[r]^-{\eta_B} \ar@/_1pc/[rr]_-T & QB \ar@{..>}[r]^-U & D
}
\end{equation*}
Now, since $G$ is an inner fibration, it has the right lifting property with respect to the inner anodyne extension $\eta_A \colon A \longrightarrow QA$ by Proposition \ref{innerwfsprop},  so there exists a morphism $V \colon QA \longrightarrow C$ making the diagram below commute.
\begin{equation*}
\cd{
A \ar[r]_-{\eta_A} \ar@/^1pc/[rr]^-S \ar[d]_-F & QA \ar[d]_-{QF} \ar@{..>}[r]_-V & C \ar[d]^-G \\
B \ar[r]^-{\eta_B} \ar@/_1pc/[rr]_-T & QB \ar[r]^-U & D
}
\end{equation*}
But $QF \colon QA \longrightarrow QB$ is a trivial cofibration between fibrant objects by inspection, with respect to which therefore $G$ has the right lifting property by assumption, so there exists a morphism $W \colon QB \longrightarrow C$ making the diagram below commute.
\begin{equation*}
\cd{
A \ar[r]_-{\eta_A} \ar@/^1pc/[rr]^-S \ar[d]_-F & QA \ar[d]_-{QF} \ar[r]_-V & C \ar[d]^-G \\
B \ar[r]^-{\eta_B} \ar@/_1pc/[rr]_-T & QB \ar@{..>}[ur]|-W \ar[r]^-U & D
}
\end{equation*}
Hence the composite $W \circ \eta_B \colon B \longrightarrow C$ is a solution to the original lifting problem. Therefore $G$ is a fibration.
\end{proof}

We say that a normal pseudofunctor between bicategories is an \emph{injective biequivalence} if it is injective on objects, faithful, essentially full, and locally fully faithful, while we say that it is an \emph{equifibration} if it has the equivalence lifting property and is an isofibration on hom-categories. One can show that a normal pseudofunctor is an equifibration if and only if it has the right lifting property in $\mathbf{Bicat}$ with respect to the source inclusions $\mathbb{D}^0 \longrightarrow \mathbb{I}$ and $\mathbb{D}^1 \longrightarrow \mathbb{I}^2$, where $\mathbb{I}$ denotes the free-living isomorphism.

\begin{lemma} \label{bicatlemma}
The injective biequivalences and equifibrations form the left class and right class respectively of a weak factorisation system on $\mathbf{Bicat}$. 
\end{lemma}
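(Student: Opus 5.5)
We will prove Lemma \ref{bicatlemma} by transporting the corresponding factorisation and lifting arguments along the fully faithful functor $Q \colon \mathbf{Bicat} \longrightarrow \mathbf{Flex}$ of Theorem \ref{bicatthm}. Alternatively, we can argue directly in $\mathbf{Bicat}$ using its strict analogue in Lack's model category. We adopt the direct approach, checking the three conditions for a weak factorisation system: factorisation, weak orthogonality, and closure under retracts. Closure under retracts is routine, since each defining property of an injective biequivalence or equifibration is a retract-stable condition on objects, morphisms and $2$-cells. The two left and right classes are defined by elementwise conditions, so the real content lies in the factorisation and the lifting.

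\textbf{Factorisation.} Given a normal pseudofunctor $F \colon A \longrightarrow B$, we imitate the mapping path space construction. Let $E$ be the bicategory whose objects are triples $(a, b, e)$ with $e \colon Fa \longrightarrow b$ an adjoint equivalence in $B$. Its morphisms $(a,b,e) \to (a',b',e')$ are triples $(f,g,\phi)$ with $\phi \colon g e \cong e' Ff$ invertible, and its $2$-cells are compatible pairs. The map $P \colon E \longrightarrow B$ is the projection to $b$, and $J \colon A \longrightarrow E$ sends $a$ to $(a,Fa,1)$, with constraints built from the normality and constraints of $F$. The map $J$ is not injective on objects in general, so we instead take the coproduct-style variant $E' = A + E$ glued by the essential image. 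More simply, we replace objects by the disjoint union of $\operatorname{ob}A$ with the objects $(a,b,e)$ having $e$ non-identity, and define morphisms as in $E$. Then $J$ is injective on objects, faithful, essentially full and locally fully faithful, since $(f,Ff,\text{canonical})$ reflects $2$-cells uniquely. The map $P$ has equivalence lifting, by composing $e$ with an equivalence in $B$, and is an isofibration on homs, by transporting $g$ along an invertible $2$-cell and adjusting $\phi$. Checking that $E$ is a bicategory, with constraints inherited from $A$ and $B$, and that $J$ and $P$ are normal pseudofunctors, uses the coherence theorems as in \S\ref{fibob}.

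\textbf{Lifting.} Given a square with $I \colon A \longrightarrow B$ an injective biequivalence and $G \colon C \longrightarrow D$ an equifibration, we construct a diagonal $L \colon B \longrightarrow C$ cell by cell. On objects in the image of $I$ we use the top map. For other objects $b$ we choose $a$ and an equivalence $I a \simeq b$, push it into $D$, and lift it along $G$ via equivalence lifting to define $Lb$ together with a chosen equivalence. On morphisms we use essential fullness to write a morphism $g$ of $B$, up to an invertible $2$-cell, as a conjugate of some $I f$ by the chosen equivalences. We then lift the resulting invertible $2$-cell in $D$ using the isofibration on hom-categories, taking care to keep morphisms in the image of $I$ fixed, which is possible by faithfulness and injectivity. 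On $2$-cells local full faithfulness of $I$ determines $L$ uniquely, and the pseudofunctor constraints of $L$ are then forced.

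The main obstacle is making $L$ \emph{normal} and strictly compatible with both squares, since the choices of equivalences and lifted morphisms must respect identities and the top map exactly. If the direct construction becomes unmanageable, the fallback is to reduce to $\mathbf{Flex}$. Via $Q$ and Propositions \ref{adjprop} and \ref{ffprop}, an equifibration $G$ corresponds to a morphism $QG$ between fibrant objects, and we would show that it has the right lifting property against trivial cofibrations between fibrant objects. The lift is then obtained as in Lemma \ref{innerfiblemma}, using the retraction of Proposition \ref{retractlemma} to pass back to $\mathbf{Bicat}$.
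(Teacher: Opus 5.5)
\textbf{The factorisation is fine; the weak orthogonality step has a gap.}

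Your factorisation is essentially the paper's construction $\mathcal{P}(F)$, and it works. Your worry that $J$ is not injective on objects is unfounded: $J(a)=(a,Fa,1)$ has $a$ as its first coordinate. So the ``coproduct-style'' modification is unnecessary, and your simplified variant is just $E$ relabelled.

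Your direct lift $L$ stops exactly where the work is. $L$ must be normal and satisfy $LI=S$ and $GL=T$ as normal pseudofunctors, i.e.\ including coherence constraints. These constraints are not ``forced''. Since $G$ is not locally faithful, nothing in $C$ is pinned down by uniqueness, so the constraints of $L$ have to be defined by conjugating through the lifted equivalences. For $G$ to send them to the constraints of $T$, you must lift whole adjoint equivalences (pseudo-inverse, unit and counit) lying exactly over the chosen data in $D$. That needs equivalence lifting and the hom-isofibration property together, not just the lift of one equivalence, and you must then check the pseudofunctor axioms.

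\textbf{The fallback is circular.} By Theorem~\ref{bicatthm} and Proposition~\ref{ffprop}, the trivial cofibrations between fibrant flexible $2$-categories are, up to isomorphism, the maps $QI$ with $I$ an injective biequivalence. Moreover $QG$ lifts against $QI$ if and only if $G$ lifts against $I$. So ``showing that $QG$ lifts against trivial cofibrations between fibrant objects'' is a restatement of the orthogonality you are trying to prove. Lemma~\ref{innerfiblemma} only reduces arbitrary trivial cofibrations to that case and supplies no lifts. In the paper the implication runs the other way: the fact that $QG$ is a fibration is deduced \emph{from} this lemma.

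\textbf{The missing idea: the retract argument of Proposition~\ref{innerwfsprop}.} This is how the paper proves the lemma. Rather than solving arbitrary squares, show two special lifts. If $F$ is an injective biequivalence, there is a section $K\colon B\to\mathcal{P}(F)$ of $P_F$ with $KF=J_F$. If $F$ is an equifibration, there is a retraction $H\colon\mathcal{P}(F)\to A$ of $J_F$ with $FH=P_F$.

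The first lift is essentially free. $P_F$ is a strict projection and the $2$-cells of $\mathcal{P}(F)$ are compatible pairs. Hence the $A$-components of all $2$-cells and constraints of $K$ are uniquely determined by local full faithfulness of $F$, and the coherence axioms follow from that uniqueness. The adjoint-equivalence bookkeeping is confined to the second lift, where the domain is the explicit bicategory $\mathcal{P}(F)$.

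A general square is then solved by $L=H_G\circ\mathcal{P}(S,T)\circ K_I$. The equations $LI=H_GJ_GS=S$ and $GL=TP_IK_I=T$ hold strictly, by functoriality of $\mathcal{P}$ and the defining equations of $K_I$ and $H_G$. So the strict compatibility you identified as the main obstacle comes for free.

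Unwinding this composite gives exactly your recipe on objects and $1$-cells. So your direct approach can be completed, but only by carrying out the constraint bookkeeping that the proposal currently leaves out.
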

\begin{proof}
One can prove this lemma by following the strategy of the proof of Proposition \ref{innerwfsprop}, replacing the strand $2$-category construction by the following construction. For each normal pseudofunctor between bicategories $F \colon A \longrightarrow B$, we define  $\mathcal{P}(F)$ to be the bicategory whose objects are triples $(a,b,f)$, where $a$ is an object of $A$, $b$ is an object of $B$, and $f \colon Fa \longrightarrow b$ is an equivalence in $B$; a morphism $(a,b,f) \longrightarrow (c,d,g)$ is a triple $(s,t,\alpha)$, where $s \colon a \longrightarrow c$ is a morphism in $A$, $t \colon b \longrightarrow d$ is a morphism in $B$, and $\alpha \colon g\circ Fs \cong t \circ f$ is an invertible $2$-cell in $B$; a $2$-cell $(s,t,\alpha) \Longrightarrow (u,v,\beta) \colon (a,b,f) \longrightarrow (c,d,g)$ is a pair $(\gamma,\delta)$ where $\gamma \colon s \Longrightarrow u$ is a $2$-cell in $A$ and $\delta \colon t \Longrightarrow v$ is a $2$-cell in $B$ such that the following diagram commutes.
\begin{equation*}
\cd[@C=2.5em]{
g \circ Fs \ar@{=>}[d]_-{\alpha} \ar@{=>}[r]^-{1 \circ F\gamma} \ar@{}[dr]|{=} & g \circ Fu  \ar@{=>}[d]^-{\beta} \\
t \circ f \ar@{=>}[r]_-{\delta \circ 1} & v \circ f
}
\end{equation*}
Composition operations, identities, and coherence constraints are induced from those of the bicategories $A$ and $B$ and the normal pseudofunctor $F$.

One can proceed to show that this construction extends to a functorial factorisation $F \mapsto P_F \circ J_F$ on $\mathbf{Bicat}$, that $J_F$ is an injective biequivalence and $P_F$ is an equifibration, and that a normal pseudofunctor $F$ is an injective biequivalence (resp.\ equifibration) if and only if the lifting problem on the left (resp.\ right) below admits a solution. 
\begin{equation*}
\cd{
A \ar[r]^-{J_F} \ar[d]_-F & \mathcal{P}(F) \ar[d]^-{P_F} \\
B \ar@{.>}[ur] \ar@{=}[r] & B
}
\qquad\qquad
\cd{
A \ar@{=}[r] \ar[d]_-{J_F} & A \ar[d]^-{F} \\
\mathcal{P}(F) \ar@{.>}[ur] \ar[r]_-{P_F} & B
}
\end{equation*}
Weak orthogonality of the two classes can then be shown as in the proof of Proposition \ref{innerwfsprop}.
\end{proof}

We say that a morphism in a flexible $2$-category is a \emph{basic equivalence} if it is atomic or an identity morphism and it has a pseudo-inverse that is also an atomic or identity morphism. Every equivalence in a fibrant flexible $2$-category is isomorphic to a basic equivalence. We say that a morphism of flexible $2$-categories $F \colon A \longrightarrow B$ has the \emph{basic equivalence lifting property} if for every object $a$ in $A$ and every basic equivalence $g \colon Fa \longrightarrow b$ in $B$, there exists a basic equivalence $f \colon a \longrightarrow a'$ in $A$ such that $Ff = g$. 

As in \cite[Proposition 6]{Lac04}, one can show that a morphism of flexible $2$-categories has the basic equivalence lifting property if and only if it has the right lifting property in $\mathbf{Flex}$ with respect to the source inclusion $\mathbb{D}^0 \longrightarrow \mathbb{E}$, where $\mathbb{E}$ denotes the free $2$-category containing an adjoint equivalence; note that $\mathbb{E} \cong Q\mathbb{I}$.

We are now ready to state our characterisations of the fibrations with fibrant codomain in the left-induced model structure on $\mathbf{Flex}$. 

\begin{theorem}
Let $F \colon A \longrightarrow B$ be a morphism of flexible $2$-categories and suppose that $B$ is fibrant in the left-induced model structure on $\mathbf{Flex}$. The following are equivalent:
\begin{enumerate}[\normalfont(i)]
\item $F$ is a fibration in the left-induced model structure on $\mathbf{Flex}$;
\item $F$ is a retract in $\mathbf{Fun}(\mathbb{D}^1,\mathbf{Flex})$ of $QG$ for some equifibration $G$ in $\mathbf{2Cat}$;
\item $F$ has the right lifting property in $\mathbf{Flex}$ with respect to the source inclusion $\mathbb{D}^0 \longrightarrow \mathbb{E}$ and the unit morphism $\eta_{[2]} \colon [2] \longrightarrow Q[2]$;
\item $F$ has the basic equivalence lifting property and is an inner fibration;
\item $A$ is fibrant and $F$ has the right lifting property in $\mathbf{Flex}$ with respect to the source inclusions $\mathbb{D}^0 \longrightarrow \mathbb{E}$ and $\mathbb{D}^1 \longrightarrow \mathbb{I}^2$;
\item $F$ is isomorphic in  $\mathbf{Fun}(\mathbb{D}^1,\mathbf{Flex})$ to $QG$ for some equifibration $G$ in $\mathbf{Bicat}$.
\end{enumerate}
\end{theorem}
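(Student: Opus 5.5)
I will establish the cycle (i)$\implies$(iii)$\implies$(iv)$\implies$(v)$\implies$(vi)$\implies$(ii)$\implies$(i).

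\emph{The easy implications.} For (i)$\implies$(iii), note that $\mathbb{D}^0 \longrightarrow \mathbb{E}$ and $\eta_{[2]}$ are both injective on objects, faithful and biequivalences, hence trivial cofibrations. For (iii)$\implies$(iv), the basic equivalence lifting property is equivalent to lifting against $\mathbb{D}^0 \longrightarrow \mathbb{E}$, as remarked before the theorem. Since $B$ is fibrant, Proposition \ref{innerfibprop} (ii)$\implies$(i) turns lifting against $\eta_{[2]}$ into $F$ being an inner fibration. For (iv)$\implies$(v), Proposition \ref{innerfibprop} (i)$\implies$(iii) gives that $A$ is fibrant and that $F$ lifts against $\mathbb{D}^1 \longrightarrow \mathbb{I}^2$. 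For (ii)$\implies$(i), an equifibration $G$ in $\mathbf{2Cat}$ is a fibration in Lack's model structure, so $QG$ is a fibration by the Quillen adjunction of Theorem \ref{quillenthm}. Fibrations are closed under retracts.

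\emph{(v)$\implies$(vi).} Since $A$ and $B$ are fibrant, Theorem \ref{bicatthm} gives bicategories $A'$, $B'$ with $A \cong QA'$ and $B \cong QB'$. Because $Q \colon \mathbf{Bicat} \longrightarrow \mathbf{Flex}$ is fully faithful (Proposition \ref{ffprop}), $F$ corresponds to a normal pseudofunctor $G \colon A' \longrightarrow B'$ with $F \cong QG$. It remains to show that $G$ is an equifibration, i.e.\ that $G$ lifts in $\mathbf{Bicat}$ against $\mathbb{D}^0 \longrightarrow \mathbb{I}$ and $\mathbb{D}^1 \longrightarrow \mathbb{I}^2$.
\begin{itemize}
\item For the hom-isofibration: an invertible 2-cell $Gf \cong g$ in $B'$ becomes a lifting problem for $QG$ against $\mathbb{D}^1 \longrightarrow \mathbb{I}^2$, because morphisms of $A'$ are atomic or identity morphisms in $QA'$. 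A solution in $QA'$ lands on an atomic or identity morphism $h$, i.e.\ a morphism of $A'$, with the required $2$-cell.
\item For equivalence lifting: given an equivalence $g \colon Ga \longrightarrow b$ in $B'$, promote it to an adjoint equivalence, giving a normal pseudofunctor $\mathbb{I}\to B'$ or equivalently a map $\mathbb{E} \cong Q\mathbb{I} \longrightarrow QB'$. This is exactly the data of a basic equivalence. Lift it against $\mathbb{D}^0 \longrightarrow \mathbb{E}$ and use full faithfulness of $Q$ to transport the lift back to a morphism $\mathbb{I}\to A'$ over $G$.
\end{itemize}
I expect this translation between $\mathbf{Flex}$-lifting problems and $\mathbf{Bicat}$-lifting problems to be the fiddliest step. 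The key is that $Q$ is fully faithful, so squares of the form $Q(\text{--})$ correspond bijectively, and the maps $\mathbb{D}^0 \to \mathbb{E}$ and $\mathbb{D}^1\to\mathbb{I}^2$ are (isomorphic to) $Q$ applied to the bicategorical generators. One caveat: $\mathbb{I}^2$ is $Q$ of itself only up to the unit, so I will instead use the fact that maps $\mathbb{I}^2 \to QB'$ with atomic edges correspond to maps $\mathbb{I}^2 \to B'$.

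\emph{(vi)$\implies$(ii).} Use Proposition \ref{retractlemma} functorially: $\varepsilon \circ \upsilon = 1$ naturally, so $G \colon A' \longrightarrow B'$ is a retract in $\mathbf{Fun}(\mathbb{D}^1,\mathbf{Bicat})$ of the strict $2$-functor $QG \colon QA' \longrightarrow QB'$. However, $QG$ need not be an equifibration in $\mathbf{2Cat}$. So I instead factor $QG$ using Lemma \ref{bicatlemma} (or Lack's factorisation in $\mathbf{2Cat}$) as $QG = P \circ J$, with $P$ an equifibration of $2$-categories and $J$ an injective biequivalence. Then:
\begin{itemize}
\item $G$ is a retract of $P$ via the lift of $J$ against the equifibration $G$, obtained by the retract argument in $\mathbf{Bicat}$, composing with $\upsilon$ and $\varepsilon$.
\item Applying $Q$ exhibits $QG \cong F$ as a retract of $QP$, with $P$ an equifibration in $\mathbf{2Cat}$.
\end{itemize}
Making this retract argument honest — producing a lift $\mathcal{P}(QG)\to A'$ over $B'$ compatible with the retraction — is the step I would check most carefully.
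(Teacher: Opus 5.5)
Your cycle is sound, and it takes a genuinely different route from the paper at the one non-formal step.

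\textbf{How the routes differ.} The paper proves (i)$\implies$(ii) with a retract argument inside $\mathbf{Flex}$. It factors $F$ in $\mathbf{2Cat}$ as an equifibration $H$ after a trivial cofibration $K$, lifts $F$ against $QK\circ\eta_A$ (using that $F$ is already a fibration), and uses a retraction of $\eta_B$. It proves (vi)$\implies$(i) in four steps:
\begin{enumerate}
\item $QG$ is an inner fibration, by Proposition~\ref{innerfibprop};
\item Lemma~\ref{innerfiblemma} reduces the claim to lifting against trivial cofibrations between fibrant objects;
\item by Theorem~\ref{bicatthm}, these are $Q$ of injective biequivalences;
\item Lemma~\ref{bicatlemma} then supplies the lifts.
\end{enumerate}
You take (i)$\implies$(iii) for free and replace both arguments by a single retract argument in $\mathbf{Bicat}$. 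Both routes rest on Lemma~\ref{bicatlemma}. Yours bypasses Lemma~\ref{innerfiblemma} and the inner-fibration bookkeeping for this step. It also yields a relative form of Proposition~\ref{retractlemma}: every equifibration of bicategories is a retract in $\mathbf{Fun}(\mathbb{D}^1,\mathbf{Bicat})$ of an equifibration of $2$-categories. The paper's route instead produces Lemma~\ref{innerfiblemma} as a reusable tool, and its (i)$\implies$(ii) argument uses only the model structure on $\mathbf{Flex}$.

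\textbf{The retract step needs correcting.} Your first claim is false. $\upsilon$ is natural on $\mathbf{Bicat}$, but $\varepsilon$ is natural only for strict $2$-functors: $G\circ\varepsilon_{A'}$ sends $(f_2,f_1)$ to $G(f_2\circ f_1)$, whereas $\varepsilon_{B'}\circ QG$ sends it to $Gf_2\circ Gf_1$. So $G$ is not a retract of $QG$ via $(\upsilon,\varepsilon)$. For the same reason, a square with left side $J$ and top $\varepsilon_{A'}$ does not commute.

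The square to lift in is
\begin{equation*}
\cd{
A' \ar@{=}[r] \ar[d]_-{J\circ\upsilon_{A'}} & A' \ar[d]^-{G} \\
C \ar[r]_-{\varepsilon_{B'}\circ P} & B'
}
\end{equation*}
It commutes because $P\circ J\circ\upsilon_{A'}=QG\circ\upsilon_{A'}=\upsilon_{B'}\circ G$ and $\varepsilon_{B'}\circ\upsilon_{B'}=1$.

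Its left side is an injective biequivalence. Indeed, $\upsilon_{A'}$ is bijective on objects, faithful, essentially full and locally fully faithful. A trivial cofibration $J$ of $\mathbf{2Cat}$ is injective on objects, faithful and a biequivalence. Using Lack's factorisation here guarantees that $P$ is a strict equifibration; $\mathcal{P}(QG)$ also works, but you must check it is a $2$-category.

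Lemma~\ref{bicatlemma} then gives $L\colon C\to A'$ with $L\circ J\circ\upsilon_{A'}=1_{A'}$ and $G\circ L=\varepsilon_{B'}\circ P$. The pairs $(J\circ\upsilon_{A'},L)$ and $(\upsilon_{B'},\varepsilon_{B'})$ exhibit $G$ as a retract of $P$. Applying the functor $Q$, together with $F\cong QG$, gives (ii).

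\textbf{A minor point.} Your caveat about $\mathbb{I}^2$ is unnecessary. $\eta_{\mathbb{I}^2}$ is an isomorphism, so $\mathbb{D}^1\to\mathbb{I}^2$ is isomorphic to $Q$ of itself. Full faithfulness of $Q$ therefore transports the lifting problems directly, as in the paper.
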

\begin{proof}
(i)$\implies$(ii). First, factorise $F$ in $\mathbf{2Cat}$ as $F = H\circ G$, where $G \colon A \longrightarrow C$ is a trivial cofibration and $H \colon C \longrightarrow B$ is an equifibration. Since the functor $Q \colon \mathbf{2Cat} \longrightarrow \mathbf{Flex}$ preserves cofibrations by direct inspection and preserves biequivalences by Theorem \ref{quillenthm}, it follows that $QG$ is a trivial cofibration. Since $B$ is a fibrant flexible $2$-category, there exists a retraction $R \colon QB \longrightarrow B$ of $\eta_B$ by Theorem \ref{fibprop}. Then, since $F$ is a fibration in the left-induced model structure, it has the right lifting property in $\mathbf{Flex}$ with respect to the trivial cofibration $QG \circ \eta_A$, whence there exists a morphism $S \colon QC \longrightarrow A$ making the following diagram commute.
\begin{equation*}
\cd[@C=2.5em]{
A \ar@/^1pc/[rr]^-{1_A} \ar[r]_-{QG \circ \eta_A} \ar[d]_-F & QC \ar[r]_-S \ar[d]^-{QH} & A \ar[d]^-F \\
B \ar[r]^-{\eta_B} \ar@/_1pc/[rr]_-{1_B} \ar[r]^-{\eta_B} & QB \ar[r]^-R & B
}
\end{equation*}
Hence $F$ is a retract of $QH$ in $\mathbf{Fun}(\mathbb{D}^1,\mathbf{Flex})$.

(ii)$\implies$(iii). By Theorem \ref{quillenthm}, $QG$ is a fibration in the left-induced model structure. Hence its retract $F$ is also a fibration, since the fibrations in a model category are closed under retracts. Therefore $F$ has the right lifting property in $\mathbf{Flex}$ with respect to the trivial cofibrations $\mathbb{D}^0 \longrightarrow \mathbb{E}$ and $\eta_{[2]} \colon [2] \longrightarrow Q[2]$.

(iii)$\implies$(iv). The morphism $F$ has the basic equivalence lifting property since it has the right lifting property in $\mathbf{Flex}$ with respect to the source inclusion $\mathbb{D}^0 \longrightarrow \mathbb{E}$, and is an inner fibration by Proposition \ref{innerfibprop}, since it has the right lifting property with respect to $\eta_{[2]} \colon [2] \longrightarrow Q[2]$ and $B$ is fibrant. 

(iv)$\implies$(v). Since $F$ is an inner fibration and $B$ is fibrant, it follows from Proposition \ref{innerfibprop} that $A$ is fibrant and that $F$ has the right lifting property with respect to the source inclusion $\mathbb{D}^1 \longrightarrow \mathbb{I}^2$. Moreover, $F$ has the right lifting property with respect to $\mathbb{D}^0 \longrightarrow \mathbb{E}$ since it has the basic equivalence lifting property.

(v)$\implies$(vi). Since $A$ and $B$ are fibrant, $F$ is isomorphic in $\mathbf{Fun}(\mathbb{D}^1, \mathbf{Flex})$ to $QG$ for some normal pseudofunctor between bicategories $G$ by Theorem \ref{bicatthm}. But the source inclusions $\mathbb{D}^0 \longrightarrow \mathbb{E}$ and $\mathbb{D}^1 \longrightarrow \mathbb{I}^2$ are isomorphic to the images under $Q$ of the source inclusions $\mathbb{D}^0 \longrightarrow \mathbb{I}$ and $\mathbb{D}^1 \longrightarrow \mathbb{I}^2$. Hence the normal pseudofunctor $G$ has the right lifting property in $\mathbf{Bicat}$ with respect to the source inclusions $\mathbb{D}^0 \longrightarrow \mathbb{I}$ and $\mathbb{D}^1 \longrightarrow \mathbb{I}^2$. Therefore  $G$ is an equifibration.

(vi)$\implies$(i). It suffices to show that, for each equifibration $G$ in $\mathbf{Bicat}$, the morphism $QG$ is a fibration in the left-induced model structure on $\mathbf{Flex}$. Since $G$ is an isofibration on hom-categories, it has the right lifting property in $\mathbf{Bicat}$ with respect to the source inclusion $\mathbb{D}^1 \longrightarrow \mathbb{I}^2$, from which it follows that $QG$ is an inner fibration by Proposition \ref{innerfibprop}. Note also that the functor $Q \colon \mathbf{Bicat} \longrightarrow \mathbf{Flex}$ sends a normal pseudofunctor to a trivial cofibration if and only if it is an injective biequivalence. Hence, by Lemma \ref{innerfiblemma} and Theorem \ref{bicatthm}, $QG$ is a fibration in the left-induced model structure on $\mathbf{Flex}$ if and only if $G$ has the right lifting property in $\mathbf{Bicat}$ with respect to all injective biequivalences. The result now follows from Lemma \ref{bicatlemma}.
\end{proof}

\begin{remark} \label{properremark}
Since every flexible $2$-category is cofibrant, the left-induced model structure on $\mathbf{Flex}$ is left proper by \cite[Corollary 13.1.3]{hirschhorn}. However, it is not right proper. For the unit morphism $\eta_{[2]} \colon [2] \longrightarrow Q[2]$ displayed in \S\ref{qsec} is a biequivalence, but its pullback along the fibration $\mathbb{D}^1 \longrightarrow Q[2]$ that picks out the edge $02$ is the boundary inclusion $\partial\mathbb{D}^1 \longrightarrow \mathbb{D}^1$, which is not a biequivalence. 
\end{remark}

\section{Monoidal structures} \label{monenr}
In this final section, we construct two symmetric monoidal closed structures on $\mathbf{Flex}$, the Gray symmetric monoidal closed structure and the cartesian closed structure, and prove that both are compatible with the left-induced model structure.

\subsection{The Gray symmetric monoidal closed structure}
In addition to its cartesian closed structure, the category $\mathbf{2Cat}$ also admits a symmetric monoidal closed structure introduced by Gray \cite[\S4.24]{gray}. The unit object is the terminal $2$-category $\mathbb{D}^0$, and the tensor product of a pair of $2$-categories $A$ and $B$ can be constructed as the domain of the $U$-cartesian lift
\begin{equation*}
(\pi_1,\pi_2) \colon A \otimes B \longrightarrow A \times B
\end{equation*}
of the canonical functor
\begin{equation*}
(\pi_1,\pi_2) \colon UA \mathbin{\square} UB \longrightarrow UA \times UB,
\end{equation*}
where $U$ denotes the forgetful functor $\mathbf{2Cat} \longrightarrow \mathbf{Cat}$. For example, the symmetric Gray tensor product $\mathbb{D}^1 \otimes \mathbb{D}^1$ is the free $2$-category presented by the following diagram.
\begin{equation*}
\cd{
\bullet \ar[r] \ar[d] \ar@{}[dr]|{\cong} & \bullet \ar[d] \\
\bullet \ar[r] & \bullet
}
\end{equation*}

\begin{remark}
See \cite{bourkegurski} for an elegant proof that this definition of the symmetric Gray tensor product is equivalent to the usual definition by generators and relations.
\end{remark}

The internal hom for a pair of $2$-categories $A$ and $B$ in this symmetric monoidal closed structure on $\mathbf{2Cat}$ is the $2$-category $[A,B]_{\mathbf{2Cat}}$ of $2$-functors $A \longrightarrow B$, pseudonatural transformations between them, and modifications between those. Note that there is a a canonical forgetful functor $U[A,B]_{\mathbf{2Cat}} \longrightarrow [UA,UB]_{\mathbf{Cat}}$ for each pair of $2$-categories $A$ and $B$, whose codomain is as in \S\ref{funnysec}.

Since the funny symmetric monoidal structure on $\mathbf{Cat}$ restricts to a symmetric monoidal structure on $\mathbf{Free}$, it follows that the Gray symmetric monoidal structure on $\mathbf{2Cat}$ restricts to a symmetric monoidal structure on $\mathbf{Flex}$. It likewise follows formally that this symmetric monoidal structure on $\mathbf{Flex}$ is closed. The internal hom $[A,B]_{\mathbf{Flex}}$ for a pair of flexible $2$-categories $A$ and $B$ in this symmetric monoidal closed structure on $\mathbf{Flex}$ can be constructed as the domain of the $U$-cartesian lift
\begin{equation*}
\cd{
[A,B]_{\mathbf{Flex}} \ar[r] &  [A,B]_{\mathbf{2Cat}}
}
\end{equation*}
of the top morphism in the following pullback square in $\mathbf{Cat}$.
\begin{equation*}
\cd{
U[A,B]_{\mathbf{Flex}} \ar[r] \ar[d]_-U \fatterpullbackcorner & U[A,B]_{\mathbf{2Cat}} \ar[d]^-U \\
[UA,UB]_{\mathbf{Free}} \ar[r] & [UA,UB]_{\mathbf{Cat}}
}
\end{equation*}
Thus the flexible $2$-category $[A,B]_{\mathbf{Flex}}$ has as objects the morphisms of flexible $2$-categories $A \longrightarrow B$, its atomic and identity morphisms are the pseudonatural transformations whose $1$-cell components are atomic or identity morphisms, and its $2$-cells are modifications between composite pseudonatural transformations.

Gray's symmetric monoidal closed structure on $\mathbf{2Cat}$ is compatible with Lack's model structure in the sense that together they  form a symmetric monoidal model category \cite[Theorem 7.5]{Lac02}. 
The following two theorems are now immediate consequences of the definition of the left-induced model structure on $\mathbf{Flex}$. We refer to \cite[\S4.2]{hovey} for the notions of symmetric monoidal model category and symmetric monoidal Quillen equivalence.

\begin{theorem} \label{monmodthm1}
The category $\mathbf{Flex}$, equipped with the Gray symmetric monoidal closed structure and the left-induced model structure, is a symmetric monoidal model category.
\end{theorem}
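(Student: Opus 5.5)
We must verify the pushout-product axiom and the unit axiom for the Gray tensor product on $\mathbf{Flex}$; in the model category of \S\ref{modstr} the unit axiom is automatic. Indeed, the unit $\mathbb{D}^0$ is cofibrant, like every object of $\mathbf{Flex}$, so we may take it as its own cofibrant replacement. The whole proof therefore reduces to the pushout-product axiom, and the plan is to transfer it from Lack's result \cite[Theorem 7.5]{Lac02} along the inclusion $\mathbf{Flex} \longrightarrow \mathbf{2Cat}$.

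First I record the properties of the inclusion that will be used. The Gray tensor product on $\mathbf{Flex}$ is by construction the restriction of the one on $\mathbf{2Cat}$. Hence the inclusion $\mathbf{Flex} \longrightarrow \mathbf{2Cat}$ is strict monoidal.

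Next, the inclusion preserves pushouts, since by the proposition of \S\ref{limsinflex} it preserves all small colimits.

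Finally, by definition of the left-induced model structure, the inclusion both preserves and reflects cofibrations and weak equivalences.

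Now let $i \colon A \longrightarrow B$ and $j \colon C \longrightarrow D$ be cofibrations in $\mathbf{Flex}$. Form their pushout-product
$$i \mathbin{\widehat{\otimes}} j \colon (A \otimes D) \cup_{A\otimes C} (B \otimes C) \longrightarrow B \otimes D$$
in $\mathbf{Flex}$. This is indeed a morphism of $\mathbf{Flex}$, because pushouts exist there and $\otimes$ is a bifunctor on $\mathbf{Flex}$.

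Apply the inclusion to $i \mathbin{\widehat{\otimes}} j$. Because the inclusion preserves pushouts and is strict monoidal, the image is exactly the pushout-product of $i$ and $j$ computed in $\mathbf{2Cat}$. Both $i$ and $j$ are cofibrations in $\mathbf{2Cat}$, so Lack's theorem \cite[Theorem 7.5]{Lac02} shows that this pushout-product is a cofibration in $\mathbf{2Cat}$, and is moreover trivial whenever $i$ or $j$ is.

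Since the inclusion reflects cofibrations and weak equivalences, $i \mathbin{\widehat{\otimes}} j$ is a cofibration in $\mathbf{Flex}$, and it is trivial whenever $i$ or $j$ is. Symmetry of the monoidal structure is inherited from $\mathbf{2Cat}$, and closedness was established above.

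There is no real obstacle; the only point needing care is that pushouts in $\mathbf{Flex}$ agree with pushouts in $\mathbf{2Cat}$. This is exactly the colimit-preservation statement of \S\ref{limsinflex}, and it would fail for limits, which is why the argument works on the cofibration side.
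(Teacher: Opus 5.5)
Your proposal is correct and takes essentially the same route as the paper, which deduces the result from \cite[Theorem 7.5]{Lac02} using that the inclusion $\mathbf{Flex} \longrightarrow \mathbf{2Cat}$ is strict monoidal and creates cofibrations, trivial cofibrations, and colimits. Your explicit check of the unit axiom, via the cofibrancy of $\mathbb{D}^0$, simply spells out a detail the paper leaves implicit.
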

\begin{proof}
This follows from \cite[Theorem 7.5]{Lac02} since the inclusion functor $\mathbf{Flex} \longrightarrow \mathbf{2Cat}$ is strict monoidal and creates cofibrations, trivial cofibrations, and colimits.
\end{proof}

\begin{theorem} \label{monadjthm1}
The adjunction 
\begin{equation*}
\xymatrix{
\mathbf{Flex} \ar@<1.5ex>[rr]_-{\hdash}&& \ar@<1.5ex>[ll]^-{Q} \mathbf{2Cat}
}
\end{equation*}
is a symmetric monoidal Quillen equivalence with respect to the Gray symmetric monoidal structures, the left-induced model structure on $\mathbf{Flex}$, and Lack's model structure on $\mathbf{2Cat}$.
\end{theorem}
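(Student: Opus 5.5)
We need to show that the Quillen equivalence of Theorem \ref{quillenthm} is a symmetric monoidal Quillen adjunction in the sense of \cite[Definition 4.2.16]{hovey}. Since it is already a Quillen equivalence, it remains to check two things. First, the left adjoint (the inclusion $\mathbf{Flex} \longrightarrow \mathbf{2Cat}$) must be a symmetric monoidal functor, or at least symmetric comonoidal with the appropriate comparison maps invertible. Second, the unit condition must hold.

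For the first point, the plan is to observe that the inclusion is \emph{strict} symmetric monoidal. The Gray tensor product on $\mathbf{Flex}$ was defined as the restriction of that on $\mathbf{2Cat}$. Concretely, for flexible $A,B$ the $2$-category $A \otimes B$ is the $U$-cartesian lift of $UA \mathbin{\square} UB \longrightarrow UA \times UB$. Its underlying category $UA \mathbin{\square} UB$ is free, because the funny tensor product restricts to $\mathbf{Free}$. Hence $A\otimes B$ is flexible, and the same object serves as the tensor product in $\mathbf{Flex}$. The unit $\mathbb{D}^0$ is common to both categories, and the associativity, unit, and symmetry isomorphisms in $\mathbf{2Cat}$ are isomorphisms between flexible $2$-categories. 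By repleteness of $\mathbf{Flex}$ in $\mathbf{2Cat}$, these isomorphisms are morphisms of $\mathbf{Flex}$. Thus the comparison maps $L(A)\otimes L(B) \longrightarrow L(A\otimes B)$ and $\mathbb{D}^0 \longrightarrow L(\mathbb{D}^0)$ are identities. By doctrinal adjunction, the right adjoint $Q$ then acquires a lax symmetric monoidal structure, with components given by the transposes of $\varepsilon_B \otimes \varepsilon_C \colon QB \otimes QC \longrightarrow B\otimes C$ and $\mathbb{D}^0 \longrightarrow Q\mathbb{D}^0$, making the adjunction a monoidal adjunction.

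For the unit condition, the plan is to note that the unit $\mathbb{D}^0$ is cofibrant in $\mathbf{Flex}$, since every object is cofibrant. Hovey's unit axiom, which requires that $L(\widetilde{\mathbb{D}^0}) \longrightarrow \mathbb{D}^0$ be a weak equivalence for a cofibrant replacement, is therefore satisfied trivially: take the identity replacement, so the map is the identity. The left adjoint also sends cofibrant objects to cofibrant objects, by Lack's theorem that flexible $2$-categories are cofibrant.

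Both model categories are symmetric monoidal model categories, by \cite[Theorem 7.5]{Lac02} and Theorem \ref{monmodthm1}. Combining this with Theorem \ref{quillenthm} yields a symmetric monoidal Quillen equivalence. I expect no genuine obstacle. The only point deserving care is verifying that the coherence isomorphisms of the Gray structure live in $\mathbf{Flex}$, and that the left adjoint is strong rather than merely lax or oplax. Repleteness of $\mathbf{Flex}$ handles the former, and strictness of the restricted tensor handles the latter.
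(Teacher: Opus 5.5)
Your proposal is correct and follows essentially the same approach as the paper, which deduces the result from the Quillen equivalence of Theorem~\ref{quillenthm} together with the fact that the inclusion $\mathbf{Flex} \longrightarrow \mathbf{2Cat}$ is strict symmetric monoidal. Your additional checks spell out what the paper leaves implicit: repleteness for the coherence isomorphisms, the unit axiom holding trivially because $\mathbb{D}^0$ is cofibrant, and the lax structure on $Q$ via doctrinal adjunction.
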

\begin{proof}
This follows from the Quillen equivalence of Theorem \ref{quillenthm} and the fact that the inclusion functor $\mathbf{Flex} \longrightarrow \mathbf{2Cat}$ is strict symmetric monoidal.
\end{proof}

It follows from Theorem \ref{monmodthm1} that, for every flexible $2$-category $A$ and fibrant flexible $2$-category $B$, the internal hom $[A,B]_{\mathbf{Flex}}$ is a fibrant flexible $2$-category. Likewise, it follows from Theorem \ref{monadjthm1} that, for every flexible $2$-category $A$ and $2$-category $B$, there is a canonical isomorphism $Q[A,B]_{\mathbf{2Cat}} \cong [A,QB]_{\mathbf{Flex}}$.  Moreover, one can show that, for each pair of bicategories $A$ and $B$, the internal hom $[QA,QB]_{\mathbf{Flex}}$ is isomorphic to the path $2$-category of the bicategory $\mathbf{Hom}(A,B)$, whose objects are normal pseudofunctors $A \longrightarrow B$, whose morphisms are pseudonatural transformations, and whose $2$-cells are modifications.

\subsection{The cartesian closed structure}
While the existence of the Gray symmetric monoidal closed structure on $\mathbf{Flex}$ could be proved more or less formally, there seems to be no alternative but to construct the cartesian internal homs of $\mathbf{Flex}$, and prove their universal property, by hand. Our proof hinges on the relationship between the symmetric Gray tensor product and the cartesian product of flexible $2$-categories, as given in the second of the following two lemmas.

\begin{lemma} \label{compbiequiv}
For each pair of flexible $2$-categories $A$ and $B$, the canonical morphism $(\pi_1,\pi_2) \colon A \otimes B \longrightarrow A \boxtimes B$ is a biequivalence. 
\end{lemma}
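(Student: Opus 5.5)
The plan is to check directly that $(\pi_1,\pi_2) \colon A \otimes B \longrightarrow A \boxtimes B$ is bijective on objects, locally fully faithful, and essentially full. Such a $2$-functor is surjective on objects and an equivalence on hom-categories, hence a biequivalence.

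The first two properties come straight from the constructions. Both $A\otimes B$ and $A\boxtimes B$ are $U$-cartesian lifts over $A\times B$: the first of $UA\mathbin{\square}UB \to UA\times UB$, the second of $UA\boxtimes UB\to UA\times UB$. The canonical morphism between them lies over the canonical functor $(\pi_1,\pi_2)\colon UA\mathbin{\square}UB \to UA\boxtimes UB$ of free categories. Both $2$-functors to $A\times B$ are locally fully faithful, and the composite $A\otimes B\to A\boxtimes B\to A\times B$ is the cartesian lift for $A\otimes B$. Hence $A\otimes B\to A\boxtimes B$ is locally fully faithful, because cartesian morphisms are left-cancellable among cartesian ones. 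It is bijective on objects, since $UA\mathbin{\square}UB$ and $UA\boxtimes UB$ both have object set $\operatorname{ob}A\times\operatorname{ob}B$.

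Essential fullness is the only real content. I would use the pushout square of \S\ref{funnysec}, which shows that the underlying free category $UA\boxtimes UB$ is obtained from $UA\mathbin{\square}UB$ by adjoining, for each pair of atomic morphisms $f\colon a\to a'$ in $A$ and $g\colon b\to b'$ in $B$, one new atomic "diagonal" morphism $(f,g)\colon (a,b)\to(a',b')$. So every morphism of $A\boxtimes B$ is a composite of atomic morphisms of three kinds: $(f,1_b)$, $(1_a,g)$, and the diagonals $(f,g)$. The first two kinds lie in the image. Since essential image under a $2$-functor is closed under composition and horizontal composition of invertible $2$-cells, it is enough to show that each diagonal $(f,g)$ is isomorphic in $A\boxtimes B$ to a morphism in the image, say $(1_{a'},g)\circ(f,1_b)$.

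For this I would use the cartesianness of $A\boxtimes B\to A\times B$. The $2$-cells between two parallel morphisms of $A\boxtimes B$ are exactly the $2$-cells in $A\times B$ between their images. The diagonal $(f,g)$ and the composite $(1_{a'},g)\circ(f,1_b)$ both map to $(f,g)$ in $A\times B$. Hence the identity $2$-cell on $(f,g)$ in $A\times B$ names an invertible $2$-cell between them in $A\boxtimes B$; this is the isomorphism displayed in the picture of $\mathbb{D}^1\boxtimes\mathbb{D}^1$. This completes essential fullness.

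The main obstacle is only bookkeeping: the description of the atomic morphisms of $UA\boxtimes UB$ must be justified cleanly from the pushout square of \S\ref{funnysec}, since colimits in $\mathbf{Free}$ are computed as in $\mathbf{Gph}$. Once that is in hand, the rest is formal.
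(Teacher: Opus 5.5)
Your proof is correct, but it is organised differently from the paper's. The paper argues by two-out-of-three. The canonical $2$-functors $A\otimes B\to A\times B$ and $A\boxtimes B\to A\times B$ are both biequivalences by construction: each is a $U$-cartesian (hence locally fully faithful) lift of a bijective-on-objects full functor, namely $UA\mathbin{\square}UB\to UA\times UB$ and $UA\boxtimes UB\to UA\times UB$ respectively. The former is the composite of the latter with $(\pi_1,\pi_2)$, which is therefore a biequivalence.

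You instead check the conditions on the comparison map itself, which amounts to proving this instance of two-out-of-three by hand. Your left-cancellation of cartesian morphisms handles the hom-categories, and your lifting of identity $2$-cells along $A\boxtimes B\to A\times B$ handles essential fullness. What your route gains is an explicit witness for each diagonal $(f,g)$, namely the invertible $2$-cell of $\mathbb{D}^1\boxtimes\mathbb{D}^1$; this is the same local structure that the pushout square of Lemma~\ref{cubicallemma} records globally. What it costs is the detour through atomic decompositions and the pushout square of \S\ref{funnysec}, which you identified as the main bookkeeping burden.

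That detour can be dropped. Take an arbitrary $1$-cell $h\colon(a,b)\to(a',b')$ of $A\boxtimes B$ with image $(h_1,h_2)$ in $A\times B$. The composite $(1_{a'},h_2)\circ(h_1,1_b)$ in $A\otimes B$ also lies over $(h_1,h_2)$. So the identity $2$-cell on $(h_1,h_2)$ lifts directly to an invertible $2$-cell in $A\boxtimes B$ between $h$ and the image of that composite. This needs no knowledge of the atomic morphisms of $UA\boxtimes UB$.
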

\begin{proof}
It is immediate from their constructions that the canonical morphisms $A \otimes B \longrightarrow A \times B$ and $A \boxtimes B \longrightarrow A \times B$ are biequivalences. But the former is equal to the composite of the latter with the morphism in the statement of the lemma, which is therefore a biequivalence.
\end{proof}

\begin{lemma} \label{cubicallemma}
For each pair of flexible $2$-categories $A$ and $B$, the diagram below is a pushout square in $\mathbf{Flex}$,
\begin{equation*}
\cd[@C=3.5em]{
\displaystyle\sum_{\substack{f,g \\ \mathrm{atomic}}} \mathbb{D}^1 \otimes \mathbb{D}^1 \ar[r]^-{\sum (\pi_1,\pi_2)} \save[]-<0pt,+10pt>{} \ar[d]^-{(f\otimes g)} \restore & \displaystyle\sum_{\substack{f,g \\ \mathrm{atomic}}} \mathbb{D}^1 \boxtimes \mathbb{D}^1 \save[]-<0pt,+10pt>{} \ar[d]^-{(f\boxtimes g)} \restore \\
A \otimes B \ar[r]_-{(\pi_1,\pi_2)} & A \boxtimes B \fatpushoutcorner
}
\end{equation*}
in which the coproducts are indexed by pairs of atomic morphisms $f$ in $A$ and $g$ in $B$.
\end{lemma}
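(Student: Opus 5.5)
The plan is to check the universal property of the pushout directly, one dimension at a time, using the two facts that determine $A \boxtimes B$ and $A \otimes B$:
\begin{itemize}
\item Both are $U$-cartesian lifts over $A \times B$: a $2$-cell between parallel morphisms $u,v$ of $A \boxtimes B$ (resp.\ $A \otimes B$) is exactly a pair of $2$-cells $\pi_1u \Rightarrow \pi_1v$ in $A$ and $\pi_2u \Rightarrow \pi_2v$ in $B$.
\item Their underlying free categories fit into the pushout square of \S\ref{funnysec} in $\mathbf{Free}$.
\end{itemize}
Since $U \colon \mathbf{Flex} \longrightarrow \mathbf{Free}$ preserves colimits, the square in the statement lies over that pushout in $\mathbf{Free}$. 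So given a flexible $2$-category $P$ with morphisms $X \colon A \otimes B \longrightarrow P$ and $Y_{f,g} \colon \mathbb{D}^1 \boxtimes \mathbb{D}^1 \longrightarrow P$ forming a cocone, there is a unique morphism of free categories $UA \boxtimes UB \longrightarrow UP$ compatible with them. It remains to show that this functor extends uniquely to a $2$-functor $Z \colon A \boxtimes B \longrightarrow P$ with $Z(\pi_1,\pi_2) = X$ and $Z(f \boxtimes g) = Y_{f,g}$.

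\textbf{Uniqueness and definition on $2$-cells.}
\begin{enumerate}
\item Each atomic ``diagonal'' morphism $(f,g)$ of $A \boxtimes B$, for $f$ and $g$ atomic, is related to the funny composite $(f,1)\circ(1,g)$ by an invertible $2$-cell. This $2$-cell has identity components over $A \times B$ and is the image under $f \boxtimes g$ of one of the two invertible $2$-cells of $\mathbb{D}^1 \boxtimes \mathbb{D}^1$.
\item Pasting these gives, for each morphism $u$ of $A \boxtimes B$, a chosen invertible $2$-cell $\sigma_u \colon u \cong \tilde u$. Here $\tilde u$ is a morphism in the image of $A \otimes B$, obtained by replacing each diagonal atom by its funny decomposition, and $\sigma_u$ is whiskered from $2$-cells in the images of the $f \boxtimes g$.
\item Any $2$-cell $\theta \colon u \Rightarrow v$ in $A \boxtimes B$ then factors as $\sigma_v^{-1} \cdot \tilde\theta \cdot \sigma_u$. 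The middle factor $\tilde\theta \colon \tilde u \Rightarrow \tilde v$ has the same components over $A \times B$, so it is the image of a unique $2$-cell of $A \otimes B$, because both $2$-categories are locally fully faithful over $A \times B$.
\item Hence $Z$ is forced on $2$-cells, which gives uniqueness. For existence, define $Z\theta = Y(\sigma_v)^{-1} \cdot X(\tilde\theta) \cdot Y(\sigma_u)$.
\end{enumerate}

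\textbf{Functoriality of $Z$.} Vertical functoriality is immediate from the formula. For horizontal composition, $\sigma_{u'\circ u}$ is the horizontal composite of $\sigma_{u'}$ and $\sigma_u$ by construction, so it reduces to functoriality of $X$ and of the $Y_{f,g}$. Compatibility with the cocone maps holds because on $A \otimes B$ the $\sigma$'s are identities, and on each $\mathbb{D}^1 \boxtimes \mathbb{D}^1$ the formula reproduces $Y_{f,g}$. For the latter, the two isomorphisms in $\mathbb{D}^1 \boxtimes \mathbb{D}^1$ compose to the image of the Gray interchange isomorphism of $\mathbb{D}^1 \otimes \mathbb{D}^1$, and the cocone condition identifies its images under $X$ and $Y_{f,g}$.

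\textbf{Main obstacle: well-definedness.} The choice of $\sigma_u$ must not matter: for each diagonal atom one may decompose it either as $(f,1)\circ(1,g)$ or as $(1,g)\circ(f,1)$. I expect this to be the delicate step. The plan is to fix one convention, say always $(f,1)\circ(1,g)$, so that $\sigma$ is canonical. Then check that the alternative choice differs by $2$-cells that lie in the image of $A \otimes B$, namely Gray interchange cells, and whose images under $X$ and $Y_{f,g}$ agree by the cocone condition restricted along $\mathbb{D}^1 \otimes \mathbb{D}^1 \longrightarrow \mathbb{D}^1 \boxtimes \mathbb{D}^1$. This shows the resulting $Z$ is independent of conventions and respects all $2$-cell structure.
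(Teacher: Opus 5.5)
Your argument is correct, but it takes a different route from the paper.

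\textbf{Your route.} You verify the universal property by hand. On $1$-cells you use the pushout of \S\ref{funnysec} in $\mathbf{Free}$. On $2$-cells you conjugate each $2$-cell of $A \boxtimes B$ by the canonical invertible $2$-cells $\sigma_u$ into the image of $A \otimes B$. There it has a unique lift, because both $2$-categories are locally fully faithful over $A \times B$ and $UA \mathbin{\square} UB \to UA \boxtimes UB$ is injective.

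\textbf{The paper's route.} The paper argues homotopically. The comparison map $C \to A \boxtimes B$ out of the actual pushout $C$ is an isomorphism on underlying free categories, for the same reason as in your argument. It is also a biequivalence by two-out-of-three. Indeed, $A \otimes B \to C$ is a pushout of the trivial cofibration $\sum \mathbb{D}^1 \otimes \mathbb{D}^1 \to \sum \mathbb{D}^1 \boxtimes \mathbb{D}^1$, and the composite $A \otimes B \to A \boxtimes B$ is a biequivalence by Lemma~\ref{compbiequiv}. A biequivalence is locally fully faithful, so the comparison map is an isomorphism.

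\textbf{What each buys.} The paper's proof takes a few lines, but it leans on the model structure (trivial cofibrations are stable under pushout) and on Lemma~\ref{compbiequiv}. Yours is longer but elementary and independent of the model structure. It also produces the mediating $2$-functor explicitly. That is exactly the description of morphisms out of $A \boxtimes B$ which the paper uses next: a $2$-functor on $A \otimes B$ together with the data $F(f,g)$ and $F^+_{f,g}$.

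\textbf{On your ``main obstacle''.} This is not really an obstacle. Once you fix the convention $(f,1)\circ(1,g)$, $Z$ is well defined by its formula. The other decomposition enters only when checking $Z \circ (f \boxtimes g) = Y_{f,g}$ on the second triangle of $\mathbb{D}^1 \boxtimes \mathbb{D}^1$. You have already handled that check via the Gray interchange cell and the cocone condition.
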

\begin{proof}
Let $C \longrightarrow A \boxtimes B$ denote the pushout-corner morphism of this square. The final observation of \S\ref{funnysec} implies that this morphism is an isomorphism on underlying categories. It is moreover locally fully faithful, since it composes with the pushout injection $A \otimes B \longrightarrow C$, which is a pushout of a trivial cofibration and hence a biequivalence, to give the biequivalence $A \otimes B \longrightarrow A \boxtimes B$ of Lemma \ref{compbiequiv}. Hence the pushout-corner morphism is an isomorphism and the square is a pushout.
\end{proof}

A morphism of flexible $2$-categories $A \boxtimes B \longrightarrow C$ therefore amounts to a morphism of flexible $2$-categories $F \colon A \otimes B \longrightarrow C$ together with the following additional data:\ for each pair of atomic morphisms $f \colon a \longrightarrow a'$ in $A$ and $g \colon b \longrightarrow b'$ in $B$, an atomic or identity morphism $F(f,g) \colon F(a,b) \longrightarrow F(a',b')$ in $C$ and an invertible $2$-cell
\begin{equation*}
\cd{
F(a,b) \ar[rr]^-{F(f,g)} \ar[dr]_-{F(a,g)} &\dtwocell[0.4]{d}{F^+_{f,g}} & F(a',b') \\
& F(a,b') \ar[ur]_-{F(f,b')}
}
\end{equation*}
in $C$. We may therefore leverage the natural bijection between $2$-functors $A \otimes B \longrightarrow C$ and cubical functors $A \times B \longrightarrow C$ (see \cite[\S4]{GPS}) to deduce that morphisms of flexible $2$-categories $A \boxtimes B \longrightarrow C$ are equivalent to the following notion.
\begin{definition}
Let $A$, $B$, and $C$ be flexible $2$-categories. A \emph{two-variable morphism of flexible $2$-categories} $F \colon (A,B) \longrightarrow C$ is a normal pseudofunctor $F \colon A \times B \longrightarrow C$ such that: for each object $a\in A$, the normal pseudofunctor $F(a,-) \colon B \longrightarrow C$ is a $2$-functor,
  for each object $b \in B$, the normal pseudofunctor $F(-,b) \colon A \longrightarrow C$ is a $2$-functor, and
  for each pair of atomic or identity morphisms $f$ in $A$ and $g$ in $B$, the morphism $F(f,g)$ is an atomic or identity morphism in $C$.
\end{definition}
We write $\mathbf{Flex}(A,B;C)$ for the set of two-variable morphisms of flexible $2$-categories $(A,B) \longrightarrow C$. This defines a functor $\mathbf{Flex}^{\mathrm{op}} \times \mathbf{Flex}^{\mathrm{op}} \times \mathbf{Flex} \longrightarrow \mathbf{Set}$ by composition and substitution. We may now state the following proposition, whose proof is a straightforward verification. 

\begin{proposition}
There is a bijection
$$\mathbf{Flex}(A\boxtimes B,C) \cong \mathbf{Flex}(A,B;C)$$
natural in $A,B,C \in \mathbf{Flex}$.
\end{proposition}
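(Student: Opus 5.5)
The bijection will be assembled from three correspondences, each natural in $A$, $B$, $C$. First, Lemma \ref{cubicallemma} and the universal property of pushouts in $\mathbf{Flex}$ identify a morphism $A \boxtimes B \longrightarrow C$ with a pair consisting of a morphism of flexible $2$-categories $F \colon A \otimes B \longrightarrow C$ and a family of morphisms $\mathbb{D}^1 \boxtimes \mathbb{D}^1 \longrightarrow C$, one for each pair of atomic morphisms $(f,g)$, that restrict to $F\circ(f\otimes g)$ along $(\pi_1,\pi_2)\colon \mathbb{D}^1\otimes\mathbb{D}^1 \longrightarrow \mathbb{D}^1\boxtimes\mathbb{D}^1$. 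The presentation of $\mathbb{D}^1\boxtimes\mathbb{D}^1$ in \S\ref{limsinflex} shows that such an extension is exactly the data listed after Lemma \ref{cubicallemma}: an atomic or identity diagonal $F(f,g)$ together with an invertible $2$-cell $F^+_{f,g}\colon F(f,b')\circ F(a,g) \cong F(f,g)$. The other triangle $2$-cell is then determined as $F^+_{f,g}$ composed with the Gray interchange isomorphism $F(f,b')\circ F(a,g)\cong F(a',g)\circ F(f,b)$.

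Second, I will use the Gray--Power--Street correspondence \cite[\S4]{GPS}. A $2$-functor $A\otimes B\longrightarrow C$ is the same as a cubical functor $A\times B\longrightarrow C$, that is, a normal pseudofunctor which is a $2$-functor in each variable separately and whose composition constraints are identities for composable pairs of the form $(f,1),(1,g)$. Under this correspondence the condition that $A \otimes B \longrightarrow C$ be a morphism of free categories becomes the condition that each $F(f,b)$ and $F(a,g)$ be atomic or identity for atomic $f$ and $g$. This is because the atomic morphisms of $U(A\otimes B)=UA\mathbin{\square}UB$ are exactly these.

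Third, I will pass between cubical data with chosen diagonals and two-variable morphisms. Given the data above, define a normal pseudofunctor $\bar F\colon A\times B\longrightarrow C$ by setting $\bar F(f,g)$ equal to the chosen diagonal when $f$ and $g$ are both atomic. For a general pair $(f,g)$, use the atomic decompositions to write $(f,g)$ canonically as a composite of "staircase" steps in $A\times B$: diagonal steps $(f_i,g_i)$ while both decompositions remain, followed by purely horizontal or purely vertical steps. Define $\bar F(f,g)$ as the corresponding composite in $C$, and obtain its constraints by pasting the $F^+$'s with the cubical constraints. Conversely, a two-variable morphism $G$ restricts to a cubical functor with atomic edges, and its constraints $G(f,b')\circ G(a,g)\cong G(f,g)$ supply the diagonals and the $2$-cells. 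One then checks that the two constructions are mutually inverse and natural. For naturality in $C$ this is immediate, and naturality in $A$ and $B$ follows because morphisms of free categories preserve atomic decompositions up to deleting identities.

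The main obstacle is the third step. The difficulty is that a two-variable morphism must send every pair of atomic-or-identity morphisms to an atomic-or-identity morphism. I therefore need to be careful that $\bar F(f,g)$ is built only from the prescribed diagonals and edges, and that its coherence constraints are uniquely determined. I would handle this by invoking the coherence theorem for pseudofunctors. Every constraint of $\bar F$ is forced to be a pasting of the $F^+_{f,g}$, their inverses, and the interchange cells. Since $C$ is a $2$-category, these pastings are well defined, which gives the pseudofunctor axioms, and uniqueness then yields bijectivity. The remaining checks are routine.
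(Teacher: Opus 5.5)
Your outline is the paper's own route: Lemma~\ref{cubicallemma}, then the Gordon--Power--Street correspondence with cubical functors, then a matching with two-variable morphisms, which the paper calls ``a straightforward verification''. Your first two steps are fine. One small correction: a two-variable morphism $G$ is not itself cubical, since $G(f,g)$ is atomic, so you must re-choose its values as $G(1,g)\circ G(f,1)$ and transport the constraints.

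The gap is the uniqueness claim in your third step. The definition pins down $F(f,g)$ only when one of $f,g$ is an identity (by $2$-functoriality in each variable) or both are atomic. When, say, $f$ is atomic and $g$ has atomic length two, a normal pseudofunctor may take \emph{any} value isomorphic to your staircase composite. The coherence theorem fixes the constraints once the values are fixed; it does not fix the values. Concretely, let $A=\mathbb{D}^1$ with atomic $x\colon 0\to 1$, let $B=[2]$ with atomic $y\colon 0\to 1$ and $z\colon 1\to 2$, and let $C=\mathbb{D}^1\boxtimes[2]$, which has exactly one $2$-cell between any parallel pair. Define $F$ as follows:
\begin{itemize}
\item send each pair of atomic-or-identity morphisms to the corresponding atomic or identity morphism of $C$;
\item send $(1_i,zy)$ to $(1_i,z)\circ(1_i,y)$;
\item take every constraint to be the unique $2$-cell;
\item send $(x,zy)$ either to $(1_1,z)\circ(x,y)$ or to $(x,z)\circ(1_0,y)$.
\end{itemize}
Both choices give two-variable morphisms with identical edges, diagonals and cells $F^+$. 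So your restriction map is not injective, and your staircase map is not surjective.

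The staircase map is not natural in $A$ either. Take $S\colon[2]\to\mathbb{D}^1$ sending the atomic $x_1,x_2$ to $1_0$ and $x$, and take $G$ to be the identity of $\mathbb{D}^1\boxtimes[2]$. The staircase extension of $G\circ(S\boxtimes 1)$ sends $(x_2x_1,zy)$ to $(x,z)\circ(1_0,y)$. The staircase extension of $G$, precomposed with $S\times 1$, sends it to $(1_1,z)\circ(x,y)$. This is exactly where ``deleting identities'' breaks your argument.

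This is not only a defect of your write-up: with the definition as literally stated, the proposition is false. For instance, $\mathbf{Flex}(\mathbb{D}^1,[2];-)$ does not preserve binary products. Over the pair $(F,F)$, with $F$ as above, a two-variable morphism into $C\boxtimes C$ may send $(x,zy)$ to different paths lying over $(F(x,zy),F(x,zy))$. Hence it is not representable, and the paper's ``straightforward verification'' passes over exactly this point.

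What is needed is an amended notion. Identify two two-variable morphisms when they are related by an invertible icon whose components at pairs of atomic-or-identity morphisms are identities. Equivalently, retain only the values at such pairs, together with the $2$-cells the constraints induce between composites of them.

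With that amendment your argument goes through, and more simply than via cubical functors. Given $G\colon A\boxtimes B\to C$, compose $G$ with your staircase choice of a path in $A\boxtimes B$ over each morphism of $A\times B$. As constraints, take the images under $G$ of the $2$-cells of $A\boxtimes B$ lying over identities; this gives the pseudofunctor axioms for free. The class of the result determines and is determined by $G$. It is natural in $A$ and $B$, because substitution only changes the representative by such a canonical icon. A rigid normalisation in the spirit of the cubical condition (your staircase) would also restore bijectivity, but the example with $S$ shows it is not preserved by substitution.
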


In order to construct the cartesian internal homs in $\mathbf{Flex}$, we first recall the \textit{recherché} notion of enhanced pseudonatural transformation (see \cite[pp.\ 6--7]{Lac06}), which arises in the description of the internal homs in the cartesian closed category $\mathbf{Bicat}$.

\begin{definition}
Let $F,G \colon A \longrightarrow B$ be normal pseudofunctors between bicategories. An \emph{enhanced pseudonatural transformation} from $F$ to $G$ is a normal pseudofunctor $H \colon \mathbb{D}^1 \times A \longrightarrow B$ whose restrictions along the inclusions $A \cong \{0\} \times A \longrightarrow \mathbb{D}^1 \times A$ and $A \cong \{1\}\times A \longrightarrow \mathbb{D}^1 \times A$ are $F$ and $G$ respectively.
\end{definition}

An enhanced pseudonatural transformation between normal pseudofunctors $F,G \colon A \longrightarrow B$ thus amounts to a pseudonatural transformation $\eta \colon F \Longrightarrow G$ together with the following ``enhancement'':\ for each morphism $f \colon x \longrightarrow y$ in $A$, a morphism $\eta_f^0 \colon Fx \longrightarrow Gy$ and an invertible $2$-cell
\begin{equation*}
\cd{
Fx \ar[rr]^-{\eta_f^0} \ar[dr]_-{Ff}&\dtwocell[0.4]{d}{\eta_f^+}& Gy \\
& Fy \ar[ur]_-{\eta_y} 
}
\end{equation*}
in $B$ such that, for each object $x \in A$, $\eta_{1_x}^0 = \eta_x$ and $ \eta_{1_x}^+ = 1_{\eta_x^0}$

The cartesian internal homs in $\mathbf{Bicat}$ may be described as follows. For each pair of bicategories $A,B$, the cartesian internal hom $\mathbf{Fun}(A,B)$ is the bicategory whose objects are the normal pseudofunctors $A \longrightarrow B$, whose morphisms are the enhanced pseudonatural transformations between them, and whose $2$-cells are the modifications between their underlying pseudonatural transformations. One may either directly define the vertical composition of enhanced pseudonatural transformations and prove it coherently associative, or one may use the fact that every pseudonatural transformation admits an enhancement to define the bicategory structure of $\mathbf{Fun}(A,B)$ by means of transport of structure from the bicategory $\mathbf{Hom}(A,B)$. Either way, forgetting enhancements defines a bijective-on-objects biequivalence $\mathbf{Fun}(A,B) \longrightarrow \mathbf{Hom}(A,B)$.

We now turn to the cartesian closed structure of $\mathbf{Flex}$. For each pair of flexible $2$-categories $A$ and $B$, we define a flexible $2$-category $\underline{\mathbf{Flex}}(A,B)$ as follows. The objects of its underlying free category $U\underline{\mathbf{Flex}}(A,B)$ are the morphisms of flexible $2$-categories $A \longrightarrow B$, and its atomic and identity morphisms $\eta\colon F \Longrightarrow G$ are the enhanced pseudonatural transformations such that for every object $x \in A$ and every atomic morphism $f$ in $A$, the morphisms $\eta_x$ and $\eta_f$ are atomic or identity morphisms in $B$.  There is an evident functor $U\underline{\mathbf{Flex}}(A,B) \longrightarrow U[A,B]_{\mathbf{2Cat}}$ which is an inclusion on objects and which forgets enhancements on atomic morphisms. We  define the flexible $2$-category $\underline{\mathbf{Flex}}(A,B)$ to be the domain of the $U$-cartesian lift
\begin{equation*}
\underline{\mathbf{Flex}}(A,B) \longrightarrow [A,B]_{\mathbf{2Cat}}
\end{equation*}
of the functor
\begin{equation*}
U\underline{\mathbf{Flex}}(A,B) \longrightarrow U[A,B]_{\mathbf{2Cat}}.
\end{equation*}
Thus a $2$-cell in $\underline{\mathbf{Flex}}(A,B)$ between a parallel pair of paths of atomic morphisms is a modification between the underlying composite pseudonatural transformations.

It is now a matter of unravelling the definitions to prove that morphisms of flexible $2$-categories $A \longrightarrow \underline{\mathbf{Flex}}(B,C)$ are in natural bijection with two-variable morphisms of flexible $2$-categories $(A,B) \longrightarrow C$. We may therefore deduce the following proposition. 

\begin{proposition}
The category $\mathbf{Flex}$ is cartesian closed.
\end{proposition}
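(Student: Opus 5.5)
The plan is to chain the two natural bijections already established: the proposition identifying $\mathbf{Flex}(A\boxtimes B,C)$ with $\mathbf{Flex}(A,B;C)$, and the (claimed) natural bijection $\mathbf{Flex}(A,\underline{\mathbf{Flex}}(B,C)) \cong \mathbf{Flex}(A,B;C)$. Composing these yields a bijection
\begin{equation*}
\mathbf{Flex}(A\boxtimes B,C)\cong\mathbf{Flex}(A,\underline{\mathbf{Flex}}(B,C))
\end{equation*}
natural in all three variables. Since $\boxtimes$ is the categorical product in $\mathbf{Flex}$ (\S\ref{limsinflex}), this says exactly that $-\boxtimes B$ has right adjoint $\underline{\mathbf{Flex}}(B,-)$. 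Cartesian closedness then follows, with the terminal object $\mathbb{D}^0$ serving as unit.

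The substantive step is therefore the second bijection. Let $F\colon(A,B)\longrightarrow C$ be a two-variable morphism. On an object $a$ we send it to the $2$-functor $F(a,-)\colon B\longrightarrow C$, which is a morphism of flexible $2$-categories because $F(1_a,g)=F(a,g)$ is atomic or an identity for $g$ atomic. On an atomic morphism $f\colon a\longrightarrow a'$ we restrict $F$ along $f\times B\colon \mathbb{D}^1\times B\longrightarrow A\times B$. This gives a normal pseudofunctor $\mathbb{D}^1\times B\longrightarrow C$, that is, an enhanced pseudonatural transformation $F(a,-)\Rightarrow F(a',-)$. Its components $F(f,b)$ and $F(f,g)$ are atomic or identities, so it is an atomic or identity morphism of $\underline{\mathbf{Flex}}(B,C)$. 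Because $U\underline{\mathbf{Flex}}(B,C)$ is free, this extends uniquely to a morphism of free categories. On $2$-cells, $\alpha\colon f_m\circ\cdots\circ f_1\Rightarrow g_n\circ\cdots\circ g_1$ in $A$, we take the modification with components $F(\alpha,b)$. This is legitimate because $F(-,b)$ is a $2$-functor and the $2$-cells of $\underline{\mathbf{Flex}}(B,C)$ are modifications between the underlying composite pseudonatural transformations, by the $U$-cartesian lift description.

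For the inverse, a morphism $G\colon A\longrightarrow\underline{\mathbf{Flex}}(B,C)$ determines $F(a,b)=G(a)(b)$, $F(f,g)$ via the enhancement $G(f)^0_g$, and $F(\alpha,\beta)$ by whiskering modification components with $G(a')(\beta)$. The pseudofunctor constraints come from the enhancement $2$-cells $G(f)^+_g$ together with pseudonaturality. This is the analogue, with enhancements, of the classical correspondence between cubical functors and pseudofunctors into $\mathbf{Hom}$.

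I expect the main obstacle to be verifying that the constraints so defined satisfy the normal pseudofunctor axioms on arbitrary composites $(f,g)$, not merely atomic ones, and that the two constructions are mutually inverse. I would handle this through the cubical reduction used before Lemma \ref{cubicallemma}. By that lemma and \cite[\S4]{GPS}, a two-variable morphism is determined by the $2$-functor $A\otimes B\longrightarrow C$ together with the atomic data $F(f,g)$ and $F^+_{f,g}$. On the other side, a morphism into $\underline{\mathbf{Flex}}(B,C)$ is determined by its underlying morphism into $[B,C]_{\mathbf{Flex}}$, which corresponds to a $2$-functor $A\otimes B\longrightarrow C$ by Gray closedness, together with the enhancements on atomic $f$. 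So I would check that these enhancements are precisely the data $F(f,g)$ and $F^+_{f,g}$ for atomic $g$, and extend them to composite $g$ by the unique coherent enhancement. Naturality in $A,B,C$ is then routine, since both sides are defined by restriction and composition.
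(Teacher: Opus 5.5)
Your top-level argument is the paper's own proof: the paper also deduces cartesian closedness by composing $\mathbf{Flex}(A\boxtimes B,C)\cong\mathbf{Flex}(A,B;C)\cong\mathbf{Flex}(A,\underline{\mathbf{Flex}}(B,C))$, and leaves the second bijection to ``unravelling the definitions''. The gap is in the one place where you go further than the paper, namely your treatment of composites. Both of the claims you rely on there are false.

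\begin{itemize}
\item A two-variable morphism is \emph{not} determined by its restriction to $A\otimes B$ together with the atomic data $F(f,g)$, $F^+_{f,g}$.
\item There is no ``unique coherent enhancement'' at a composite $g$.
\end{itemize}

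To see the second point, recall that an enhanced pseudonatural transformation is a normal pseudofunctor $H\colon\mathbb{D}^1\times B\to C$. For composite $g$, the morphism $\eta^0_g=H(u,g)$ (with $u\colon 0\to 1$) is fixed only up to isomorphism. Transporting $H$ along any invertible $2$-cell $H(u,g)\cong k$ gives another enhanced transformation. Moreover, the conditions defining atomic morphisms of $\underline{\mathbf{Flex}}(B,C)$ only constrain $\eta_x$ and $\eta^0_g$ for atomic $g$. In the same way, a two-variable morphism carries a free value $F(f,g)$ whenever $f$ and $g$ are non-identities and one of them is composite. Your inverse construction never specifies this value when $f$ is composite.

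Here is a concrete counterexample. Take $B=[2]$ and $C=\mathbb{D}^1\boxtimes[2]$. The hom-categories of $C$ are chaotic, so any identity-preserving assignment on objects and morphisms, with the unique $2$-cells as constraints, is a normal pseudofunctor. Let $F,G\colon[2]\to C$ be the inclusions at $0$ and $1$. Set $\eta_j=(u,j)$, $\eta^0_{01}=(u,01)$ and $\eta^0_{12}=(u,12)$. Then either choice $\eta^0_{02}=(u,12)\circ(0,01)$ or $\eta^0_{02}=(1,12)\circ(u,01)$ gives an atomic morphism $F\Rightarrow G$ of $\underline{\mathbf{Flex}}([2],C)$. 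This yields two distinct morphisms $\mathbb{D}^1\to\underline{\mathbf{Flex}}([2],C)$. They agree on everything a morphism $\mathbb{D}^1\boxtimes[2]\to C$ records, because $\mathbb{D}^1\boxtimes[2]$ has no atomic morphism over $(u,02)$; both would have to correspond to the identity. Dually, for $A=[2]$ and $B=\mathbb{D}^1$, the value $F(02,u)$ of a two-variable morphism is recorded by no morphism $[2]\to\underline{\mathbf{Flex}}(\mathbb{D}^1,C)$. So, read literally, neither cited bijection holds, and the definitions (the paper's included) must be read with this extra data removed.

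The repair is to let an atomic morphism of $\underline{\mathbf{Flex}}(B,C)$ carry enhancements only at atomic morphisms of $B$. With that reading, your cubical reduction works directly, with no extension step and without two-variable morphisms at all:
\begin{itemize}
\item $\underline{\mathbf{Flex}}(B,C)\to[B,C]_{\mathbf{Flex}}$ is locally fully faithful and $UA$ is free.
\item Hence a morphism $A\to\underline{\mathbf{Flex}}(B,C)$ is a morphism $A\to[B,C]_{\mathbf{Flex}}$, that is, a morphism $A\otimes B\to C$, together with the following data. For each pair of atomic morphisms $f\colon a\to a'$ and $g\colon b\to b'$, it gives an atomic or identity morphism $F(a,b)\to F(a',b')$ and an invertible $2$-cell to it from $F(f,b')\circ F(a,g)$.
\item By Lemma \ref{cubicallemma}, this is precisely a morphism $A\boxtimes B\to C$, naturally in $A$, $B$ and $C$.
\end{itemize}

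(Minor point: the unique extension in your forward construction uses that $UA$ is free, not that $U\underline{\mathbf{Flex}}(B,C)$ is free.)
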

\begin{proof}
This follows from the natural bijections
\begin{equation*}
\mathbf{Flex}(A\boxtimes B, C) \cong \mathbf{Flex}(A,B;C) \cong \mathbf{Flex}(A,\underline{\mathbf{Flex}}(B,C)).  \qedhere
\end{equation*}
\end{proof}

Having established the cartesian closed structure of $\mathbf{Flex}$, we may now prove that it is compatible with the left-induced model structure. Note that a model category is said to be cartesian if it is a cartesian closed category and a monoidal model category with respect to its cartesian closed structure.

\begin{theorem} \label{cartesianthm}
The category $\mathbf{Flex}$, equipped with the left-induced model structure, is a cartesian model category.
\end{theorem}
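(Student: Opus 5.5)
We have to verify the pushout-product axiom for $\boxtimes$; the unit axiom is automatic because every object is cofibrant. So take cofibrations $i \colon A \longrightarrow B$ and $j \colon C \longrightarrow D$ in $\mathbf{Flex}$, and write $i \mathbin{\widehat{\boxtimes}} j$ for their pushout-product. We must show that $i \mathbin{\widehat{\boxtimes}} j$ is a cofibration, and that it is trivial whenever $i$ or $j$ is.

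\emph{Cofibrations.} By Proposition \ref{cofibprop}, cofibrations are exactly the morphisms that are injective on objects and faithful. The forgetful functor $U \colon \mathbf{Flex} \longrightarrow \mathbf{Free}$ preserves colimits and products. Hence $U(i \mathbin{\widehat{\boxtimes}} j)$ is the pushout-product in $\mathbf{Free}$ of the injections $Ui$ and $Uj$. This is injective by \S\ref{wfsfree}, because $\mathbf{Free}$ is equivalent to a presheaf category and monomorphisms are closed under pushout-products there.

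\emph{Acyclicity.} We compare with the Gray tensor, using the natural transformation $(\pi_1,\pi_2) \colon X \otimes Y \longrightarrow X \boxtimes Y$. This is a componentwise biequivalence by Lemma \ref{compbiequiv}. It induces a map of commutative squares from the $\otimes$-pushout-product square of $(i,j)$ to the $\boxtimes$-one, and hence a morphism from $i \mathbin{\widehat{\otimes}} j$ to $i \mathbin{\widehat{\boxtimes}} j$ in $\mathbf{Fun}(\mathbb{D}^1,\mathbf{Flex})$.
\begin{itemize}
\item On the corners $B \otimes D \longrightarrow B \boxtimes D$ this map is a biequivalence.
\item On the pushout corners it is a map of pushouts of spans whose legs $A \otimes C \longrightarrow B \otimes C$ and $A \otimes C \longrightarrow A \otimes D$ are cofibrations, by Theorem \ref{monmodthm1} (and likewise for $\boxtimes$, by the cofibration step). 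The three comparison maps between the spans are biequivalences.
\end{itemize}
The model structure is left proper (Remark \ref{properremark}); more directly, all objects are cofibrant, so pushouts along cofibrations are homotopy pushouts. The cube lemma therefore shows that the map between the pushouts is a weak equivalence.

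By two-out-of-three in the commutative square formed by $i \mathbin{\widehat{\otimes}} j$, $i \mathbin{\widehat{\boxtimes}} j$ and the two comparison maps, $i \mathbin{\widehat{\boxtimes}} j$ is a biequivalence if and only if $i \mathbin{\widehat{\otimes}} j$ is. When $i$ or $j$ is trivial, $i \mathbin{\widehat{\otimes}} j$ is a biequivalence by Theorem \ref{monmodthm1}, which completes the argument.

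The main point requiring care is the homotopy-invariance of the pushout corners. We use the cube lemma in its standard form (Hovey, Lemma 5.2.6): for a map between spans of cofibrant objects, each with one leg a cofibration, in which the three components are weak equivalences, the induced map of pushouts is a weak equivalence. The hypotheses hold here because every object is cofibrant and the relevant legs are cofibrations. We also need the naturality of $(\pi_1,\pi_2)$ in both variables; this is immediate from its construction as the comparison between the cartesian lifts.

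A more hands-on alternative is Lemma \ref{cubicallemma}, which exhibits $A \otimes B \longrightarrow A \boxtimes B$ as a pushout of a coproduct of the trivial cofibrations $\mathbb{D}^1 \otimes \mathbb{D}^1 \longrightarrow \mathbb{D}^1 \boxtimes \mathbb{D}^1$. This shows directly that these comparison maps are trivial cofibrations. Either route suffices.
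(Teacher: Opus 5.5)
Your proof is correct. Your treatment of cofibrations is the same as the paper's, but you handle acyclicity by a genuinely different route.

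The paper uses the standard reduction available when every object is cofibrant: it suffices that $X \boxtimes (-)$ preserve biequivalences. This then follows by two-out-of-three from two facts. First, the maps $X \boxtimes Y \longrightarrow X \times Y$ are biequivalences, immediately from the construction of $\boxtimes$ as a $U$-cartesian lift. Second, Lack's result \cite[Lemma 7.4]{Lac02} says that $X \times (-)$ preserves biequivalences in $\mathbf{2Cat}$.

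You instead transport the pushout-product axiom from the Gray tensor product (Theorem \ref{monmodthm1}) along the natural biequivalence $X \otimes Y \longrightarrow X \boxtimes Y$ of Lemma \ref{compbiequiv}, and you control the pushout corners with the cube lemma. The hypotheses you check all hold: every object is cofibrant, $i \otimes C$ and $i \boxtimes C$ are cofibrations, and the comparison map is natural.

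The paper's route is shorter and rests only on an elementary fact about cartesian products of $2$-categories. Yours depends on the heavier Gray monoidal model structure \cite[Theorem 7.5]{Lac02}, although both routes ultimately use that $X \boxtimes Y \longrightarrow X \times Y$ is a biequivalence, since Lemma \ref{compbiequiv} is deduced from it. In exchange, yours is a general transfer principle. In a model category where every object is cofibrant, suppose two tensor products both take pairs of cofibrations to cofibrations under pushout-product and are linked by a natural weak equivalence. Then the pushout-product axiom for one implies it for the other. Your argument also makes explicit the homotopical compatibility of $\otimes$ and $\boxtimes$ that the paper only records afterwards, in its weak monoidal Quillen equivalence.
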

\begin{proof}
Since every object is cofibrant in the left-induced model structure on $\mathbf{Flex}$, it remains to show that the pushout-product in $\mathbf{Flex}$ of two cofibrations is a cofibration, and that cartesian product with any object preserves biequivalences. The former follows from the fact that the pushout-product in $\mathbf{Free}$ of two injective morphisms is an injective morphism, while the latter follows from the facts that, for any pair of flexible $2$-categories $A$ and $B$,  the canonical $2$-functor $A \boxtimes B \longrightarrow A \times B$ is a biequivalence by construction, and that for any $2$-category $C$, the functor $C \times (-) \colon \mathbf{2Cat} \longrightarrow \mathbf{2Cat}$ preserves biequivalences by \cite[Lemma 7.4]{Lac02}.
\end{proof}

It follows that,  for every flexible $2$-category $A$ and fibrant flexible $2$-category $B$, the flexible $2$-category $\underline{\mathbf{Flex}}(A,B)$ is fibrant. In particular, 
Theorem \ref{bicatthm} implies that, for each pair of bicategories $A$ and $B$, the flexible $2$-category $\underline{\mathbf{Flex}}(QA,QB)$ is isomorphic to the path $2$-category of the cartesian internal hom $\mathbf{Fun}(A,B)$ in $\mathbf{Bicat}$.

Our final theorem relates the two monoidal model structures on  $\mathbf{Flex}$. Since the unit object of the Gray symmetric monoidal structure is terminal, it follows that the canonical morphisms $(\pi_1,\pi_2) \colon A \otimes B \longrightarrow A \boxtimes B$ upgrade the identity functor on $\mathbf{Flex}$ to both a symmetric monoidal functor $1 \colon (\mathbf{Flex},\boxtimes) \longrightarrow (\mathbf{Flex},\otimes)$ and a  symmetric opmonoidal functor $1 \colon (\mathbf{Flex},\otimes) \longrightarrow (\mathbf{Flex},\boxtimes)$ between the indicated symmetric monoidal categories. We refer to \cite[Definition 3.6]{schwede} for the notion of a weak (symmetric) monoidal Quillen equivalence.

\begin{theorem}
The identity adjunction
\begin{equation*}
\xymatrix{
(\mathbf{Flex},\otimes) \ar@<1.5ex>[rr]_-{\hdash}^-1&& \ar@<1.5ex>[ll]^-{1} (\mathbf{Flex},\boxtimes)
}
\end{equation*}
is a weak symmetric monoidal Quillen equivalence with respect to the left-induced model structure and the indicated symmetric monoidal closed structures. \end{theorem}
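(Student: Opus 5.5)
We have to check the conditions of Schwede--Shipley's Definition 3.6 for the identity adjunction, with the left adjoint $1 \colon (\mathbf{Flex},\otimes) \longrightarrow (\mathbf{Flex},\boxtimes)$ regarded as symmetric opmonoidal via the canonical comparison maps $(\pi_1,\pi_2) \colon A \otimes B \longrightarrow A \boxtimes B$, and the right adjoint $1 \colon (\mathbf{Flex},\boxtimes) \longrightarrow (\mathbf{Flex},\otimes)$ correspondingly lax symmetric monoidal.

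First, the identity adjunction is a Quillen equivalence for trivial reasons: both sides carry the same left-induced model structure, so the identity functors preserve cofibrations and weak equivalences, and the derived unit and counit are identities.

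Second, the structure morphisms must be weak equivalences. The unit comparison is the identity $\mathbb{D}^0 \longrightarrow \mathbb{D}^0$, since $\mathbb{D}^0$ is the unit of both structures. The tensor comparison is required to be a weak equivalence for cofibrant $A$ and $B$. Every object of $\mathbf{Flex}$ is cofibrant, so this amounts to showing that $(\pi_1,\pi_2) \colon A \otimes B \longrightarrow A \boxtimes B$ is a biequivalence for all flexible $2$-categories $A$ and $B$. This is exactly Lemma \ref{compbiequiv}.

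Third, Schwede--Shipley impose a unit condition involving a cofibrant replacement of the unit. Since $\mathbb{D}^0$ is already cofibrant, we may take the identity as its cofibrant replacement, and the condition then holds trivially.

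Finally, both sides must be monoidal model categories with respect to these structures. For $(\mathbf{Flex},\otimes)$ this is Theorem \ref{monmodthm1}, and for $(\mathbf{Flex},\boxtimes)$ it is Theorem \ref{cartesianthm}.

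The only step with genuine content is therefore the second one, and it has already been established in Lemma \ref{compbiequiv}. What remains is to confirm that the opmonoidal and monoidal structures on the identity functor, given by the projections, satisfy the coherence axioms. These follow from the universal property of products in $\mathbf{Flex}$ together with the fact that $\mathbb{D}^0$ is terminal.
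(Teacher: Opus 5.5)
Your proposal is correct and follows essentially the same route as the paper's one-line proof. In both, the only non-formal input is Lemma \ref{compbiequiv}, that $(\pi_1,\pi_2)\colon A\otimes B\longrightarrow A\boxtimes B$ is a biequivalence. The remaining conditions of Schwede--Shipley are immediate: the adjunction is the identity, every object is cofibrant, the common unit $\mathbb{D}^0$ is terminal, and both structures are monoidal model structures by Theorems \ref{monmodthm1} and \ref{cartesianthm}.
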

\begin{proof}
This is a combination of Theorem \ref{quillenthm} and Lemma \ref{compbiequiv}.
\end{proof}
It follows that, for every flexible $2$-category $A$ and every fibrant flexible $2$-category $B$, the canonical morphism $\underline{\mathbf{Flex}}(A,B) \longrightarrow [A,B]_{\mathbf{Flex}}$ is a bijective-on-objects biequivalence. Hence, for every flexible $2$-category $A$ and every $2$-category $B$, there is a canonical bijective-on-objects biequivalence $\underline{\mathbf{Flex}}(A,QB) \longrightarrow Q[A,B]_{\mathbf{2Cat}}$, and hence also a biequivalence $\underline{\mathbf{Flex}}(A,QB) \longrightarrow \mathbf{Hom}(A,B)$.


\begin{thebibliography}{GKR20}

\bibitem[AR94]{arbook}
Jiří Adámek and Jiří Rosický. \textit{Locally presentable and accessible categories}. London Mathematical Society Lecture Note Series, 189. Cambridge University Press, Cambridge, 1994. {\rm xiv}+316 pp.

\bibitem[Bou21]{bourke}
John Bourke. Accessible aspects of 2-category theory. \textit{J. Pure Appl. Algebra} \textbf{225} (2021), no. 3, Paper No. 106519, 43 pp.

\bibitem[BG17]{bourkegurski}
John Bourke and Nick Gurski. The Gray tensor product via factorisation. \textit{Appl. Categ. Structures} \textbf{25} (2017), no. 4, 603--624.

\bibitem[BH22]{bourkehenry}
John Bourke and Simon Henry. Algebraically cofibrant and fibrant objects revisited. \textit{Homology Homotopy Appl.} \textbf{24} (2022), no. 1, 271--298.

\bibitem[CR14]{chingriehl}
Michael Ching and Emily Riehl. Coalgebraic models for combinatorial model categories. \textit{Homology Homotopy Appl.} \textbf{16} (2014), no. 2, 171--184.

\bibitem[Gar12]{smallobject}
Richard Garner. Understanding the small object argument. \textit{Appl. Categ. Structures} \textbf{20} (2012), no. 2, 103--141.

\bibitem[GKR20]{gmr}
Richard Garner, Magdalena Kędziorek, and Emily Riehl. Lifting accessible model structures. \textit{J. Topol.} \textbf{13} (2020), no. 1, 59--76.

\bibitem[GPS95]{GPS}
R.\ Gordon, A.\ J.\ Power, and Ross Street. Coherence for tricategories. \textit{Mem. Amer. Math. Soc.} \textbf{117} (1995), no. 558, {\rm vi}+81 pp.

\bibitem[Gra74]{gray}
John W.\ Gray. \textit{Formal category theory: adjointness for $2$-categories}. Lecture Notes in Mathematics, Vol. 391. Springer-Verlag, Berlin-New York, 1974. {\rm xii}+282 pp.

\bibitem[Gur13]{Gur13}
Nick Gurski. \textit{Coherence in three-dimensional category theory}. Cambridge Tracts in Mathematics, 201. Cambridge University Press, Cambridge, 2013. viii+278 pp.

\bibitem[Hir03]{hirschhorn}
Philip S.\ Hirschhorn. \textit{Model categories and their localizations}. Mathematical Surveys and Monographs, 99. American Mathematical Society, Providence, RI, 2003. xvi+457 pp.

\bibitem[Hov99]{hovey}
Mark Hovey. \textit{Model categories}. Mathematical Surveys and Monographs, 63. American Mathematical Society, Providence, RI, 1999. xii+209 pp.

\bibitem[JS93]{pspb}
André Joyal and Ross Street. Pullbacks equivalent to pseudopullbacks. \textit{Cahiers Topologie Géom. Différentielle Catég.} \textbf{34} (1993), no. 2, 153--156.

\bibitem[Lac02]{Lac02}
Stephen Lack. A Quillen model structure for 2-categories. \textit{$K$-Theory} \textbf{26} (2002), no. 2, 171--205.

\bibitem[Lac04]{Lac04}
Stephen Lack. A Quillen model structure for bicategories. \textit{$K$-Theory} \textbf{33} (2004), no. 3, 185--197.

\bibitem[Lac06]{Lac06}
Stephen Lack. A convenient 2-category of bicategories. Talk at CT2006, the 2006 International Category Theory Conference. Slides available at \url{https://www.mscs.dal.ca/~selinger/ct2006/slides/CT06-Lack.pdf}.

\bibitem[LP08]{lackpaoli}
Stephen Lack and Simona Paoli. 2-nerves for bicategories. \textit{$K$-Theory} \textbf{38} (2008), no. 2, 153--175.

\bibitem[LR12]{LR12}
Stephen Lack and Ji\v{r}\'{\i} Rosick\'{y}. Enriched weakness. \textit{J. Pure Appl. Algebra} \textbf{216} (2012), no. 8-9, 1807--1822.

\bibitem[LR16]{LR16}
Stephen Lack and Ji\v{r}\'{\i} Rosick\'{y}. Homotopy locally presentable enriched categories. \textit{Theory Appl. Categ.} \textbf{31} (2016), no. 25, 712--754.

\bibitem[MR14]{cellular}
M. Makkai and J. Rosický. Cellular categories. \textit{J. Pure Appl. Algebra} \textbf{218} (2014), no. 9, 1652--1664.

\bibitem[Men12]{menni}
Matías Menni. Bimonadicity and the explicit basis property. \textit{Theory Appl. Categ.} \textbf{26} (2012), No. 22, 554--581.

\bibitem[SS03]{schwede}
Stefan Schwede and Brooke Shipley. Equivalences of monoidal model categories. \textit{Algebr. Geom. Topol.} \textbf{3} (2003), 287--334.

\bibitem[Shu12]{Shu12}
Michael A.\ Shulman. Not every pseudoalgebra is equivalent to a strict one. \textit{Adv. Math.} \textbf{229} (2012), no. 3, 2024--2041.

\end{thebibliography}
\end{document}